\theoremstyle{plain}
\newtheorem{theorem}{Theorem}[section]
\newtheorem{lemma}[theorem]{Lemma}
\theoremstyle{definition}
\theoremstyle{remark}
\newtheorem{remark}[theorem]{Remark}
\numberwithin{equation}{section}
\newcommand{\abs}[1]{\lvert#1\rvert}
\newcommand{\lr}[1]{\Bigl(#1\Bigr)}
\newcommand{\Lr}[1]{\left(#1\right)}
\newcommand{\nm}[2]{\|\,#1\,\|_{#2}}
\newcommand{\jump}[1]{[\![#1]\!]}
\newcommand{\vareps}[1]{#1^\varepsilon}
\newcommand{\leps}[1]{#1_l^\varepsilon}
\newcommand{\heps}[1]{#1_h^\varepsilon}
\newcommand{\R}{\mathbb R}
\newcommand{\na}{\nabla}
\begin{document}
\title[HMMs for 4th-order singular perturbations]{Heterogeneous multiscale methods for fourth-order singular perturbations}
\author[Y. L. Liao]{Yulei Liao}
\address{Department of Mathematics, Faculty of Science, National University of Singapore, 10 Lower Kent Ridge Road, Singapore 119076, Singapore}
\email{ylliao@nus.edu.sg}
\author[P. B. Ming]{Pingbing Ming}
\address{LSEC, Institute of Computational Mathematics and Scientific/Engineering Computing, AMSS, Chinese Academy of Sciences, Beijing 100190, China}
\address{School of Mathematical Sciences, University of Chinese Academy of Sciences, Beijing 100049, China}
\email{mpb@lsec.cc.ac.cn}
\thanks{This work was funded by the National Natural Science Foundation of China through Grant No. 12371438.}
\keywords{Heterogeneous multiscale method; Singular perturbation of elliptic homogenization problem; Resonance error; Boundary condition;  Boundary layer effect}
\date{\today}
\subjclass[2020]{35B25, 35B27, 65N12, 65N30, 74Q10}
\begin{abstract}
We develop a numerical homogenization method for fourth-order singular perturbation problems within the framework of heterogeneous multiscale method. These problems arise from heterogeneous strain gradient elasticity and elasticity models for architectured materials. We establish an error estimate for the homogenized solution applicable to general media and derive an explicit convergence for the locally periodic media with the fine-scale $\varepsilon$. For cell problems of size $\delta=\mathbb{N}\varepsilon$, the classical resonance error $\mathcal{O}(\varepsilon/\delta)$ can be eliminated due to the dominance of the higher-order operator. Despite the occurrence of boundary layer effects, discretization errors do not necessarily deteriorate for general boundary conditions. Numerical simulations corroborate these theoretical findings.
\end{abstract}
\maketitle

\section{Introduction}\label{sec:intro}
Consider the singular perturbations of the scalar elliptic homogenization problem~\cite{Bensoussan:2011}:
\begin{equation}\label{eq:ueps}
\left\{\begin{aligned}
    \iota^2\Delta^2u^\varepsilon-\mathrm{div}(\vareps{A}\nabla u^\varepsilon)&=f \qquad
    &&\text{in }\Omega,\\ u^\varepsilon=\partial_{\boldsymbol{n}}u^\varepsilon&=0
    \qquad&&\text{on }\partial\Omega,
\end{aligned}\right.
\end{equation}
where $\Omega\subset\mathbb{R}^d$ is a bounded domain, $\partial_{\boldsymbol{n}}$ is the normal derivative, $0<\varepsilon\ll 1$ is a small parameter that signifies explicitly the length scale of the heterogeneity, $0<\iota\ll 1$ is the strength of the singular perturbations, with
\(
\iota\to0\) when $\varepsilon\to 0$, and the coefficient matrix $\vareps{A}$ belongs to a set $\mathcal{M}(\lambda,\Lambda;\Omega)$  defined by 
\begin{equation}\label{eq:cvcv}
\begin{aligned}
\mathcal{M}(\lambda,\Lambda;\Omega)&{:}=\Bigl\{A\in [L^{\infty}(\Omega)]^{d\times d}\mid
A(\boldsymbol{x})\boldsymbol{\xi}\cdot\boldsymbol{\xi}\ge\lambda\abs{\boldsymbol{\xi}}^2,\\
&\quad\qquad A(\boldsymbol{x})\boldsymbol{\xi}\cdot\boldsymbol{\xi}\ge\dfrac{1}{\Lambda}\abs{A(\boldsymbol{x})\boldsymbol{\xi}}^2\quad\text{for all }\boldsymbol{x}\in\Omega\text{ and }\boldsymbol{\xi}\in\mathbb{R}^d\Bigr\}.
\end{aligned}
\end{equation}
The elements of $\mathcal{M}(\lambda,\Lambda;\Omega)$ are not necessarily symmetric. This boundary value problem represents a possible remedy of the shear bands under severe loading for the heterogeneous materials~\cite{Francfort:1994}. The sequence $\vareps{A}$ satisfying~\cref{eq:cvcv} converges to $\bar{A}$ when $\varepsilon\to0$ in the sense of  $H$-convergence~\cite{Murat:2018}, i.e., the solution $u^\varepsilon$ of~\cref{eq:ueps} satisfies
\[
\left\{\begin{aligned}
    u^\varepsilon&\rightharpoonup \bar{u}\quad && \text{weakly in }H_0^1(\Omega),\\
    \iota\Delta u^\varepsilon&\rightharpoonup 0\quad  && \text{weakly in }L^2(\Omega),\\
    \vareps{A}\nabla u^\varepsilon&\rightharpoonup\bar{A}\nabla\bar{u}\quad && \text{weakly in }[L^2(\Omega)]^d,
\end{aligned}\right.
\]
for some $\bar{u}\in H_0^1(\Omega)$ the weak solution of the homogenization problem
\begin{equation}\label{eq:u}\left\{\begin{aligned}
    -\mathrm{div}(\bar{A}\nabla\bar{u})&=f\qquad&& \text{in }\Omega,\\
    \bar{u}&=0\qquad && \text{on }\partial\Omega,
\end{aligned}\right.\end{equation}
where $\bar{A}\in[L^\infty(\Omega)]^{d\times d}$ is the effective matrix. When $A^\varepsilon$ is periodic, we refer to~\cite{Zhikov:1983,Francfort:1994,Tewary:2021} for qualitative results on homogenization of~\cref{eq:ueps}, and the quantitative homogenization results can be found in \cite{Niu:2019,Pastukhova:2020,Niu:2022}.

Computing the full solution of~\cref{eq:ueps} is computational intensive because one has to resolves the fine-scale $\varepsilon$ with a fourth-order conforming discretization. While a sparse operator compression method for~\cref{eq:ueps} is proposed in~\cite{Zhang:2017}, the stiffness matrices quickly become ill-conditioned even for two-dimensional problem. Instead of finding a fine-scale solution, we seek an approximation of the coarse-scale solution that incurs reduced computational cost without resolving the full fine-scale $\varepsilon$. The coarse-scale solution corresponds to the $H$-limit~\cref{eq:u}.

To this end, we shall develop a coarse-scale simulation method within the framework of the heterogeneous multiscale method (HMM) developed by \textsc{E and Engquist} in~\cite{E:2003} to solve~\cref{eq:u}. A comprehensive review of HMM may be found in~\cite{Engquist:2007,Abdulle:2012}. Recently, there have been many interesting works extending HMM to address the Landau-Lifshitz equation in heterogeneous media~\cite{Runborg:2022a, Runborg:2022b} and the rough-wall laminar viscous flow~\cite{Bjorn:2022}, to name just a few. HMM aims to capture the macroscopic behavior of a system without resolving the microscopic details. HMM consists of two key components: the macroscopic solver and the cell problems for retrieving the missing data of the macroscale solver. We choose the linear Lagrange finite element as the macroscopic solver because $\bar{u}$ solves a second-order elliptic boundary value problem~\cref{eq:u}. The missing data in the macroscopic solver is the effective matrix $\bar{A}$, which is determined by solving certain cell problems that typically take the form of~\cref{eq:ueps} without the source term, subject to certain boundary conditions.

The boundary conditions of the cell problems are crucial for the accuracy and efficiency of the overall method. Careful consideration of these boundary conditions ensures that the macroscopic solver accurately captures the essential features of the original problem. Inspired by~\cite{Hill:1963}, we propose four different boundary conditions for the cell problem, allowing for a unified analysis. Numerical  findings indicate that all proposed conditions yield accurate and efficient results, with only marginal difference among them. It is worth mentioning that the method is general and applicable regardless of the explicit formulation of $\iota$ and $A^\varepsilon$. However, the effective matrix $\bar A$ depends on the explicit relationship between $\iota$ and $\varepsilon$, as demonstrated in~\cite[Theorem 1.3]{Francfort:1994} and~\cite{Niu:2022}. 

We complement the method with a comprehensive analysis. Our analysis follows the framework established by \textsc{E, Ming and Zhang} in~\cite{Ming:2005}. Since the numerical effective matrix $A_H\in\mathcal{M}(\lambda,\Lambda;\Omega)$, we establish the overall accuracy in~\cref{lema:macro} which consists of the discretization error of the macroscopic solver and the error caused by the approximation of the effective matrix that
\[
\nm{\nabla(\Bar{u}-u_H)}{L^2(\Omega)}\le C(\lambda,\Lambda,\Bar{u})\bigl(H+e(\mathrm{HMM})\bigr),
\]
where $e(\mathrm{HMM})$ refers to the error caused by estimating the effective matrix. Under the assumption that $\iota=\mu\varepsilon^\gamma$ with $\gamma,\mu>0$ and $\vareps{A}(\boldsymbol{x})=A(\boldsymbol{x},\boldsymbol{x}/\varepsilon)$ , where $A\in [C^{0,1}(\Omega;L^\infty(\mathbb{R}^d)]^{d\times d}$ and $A(\boldsymbol{x},\cdot)$ is $Y{:}=[-1/2,1/2]^d$-periodic,  we analyze $e(\mathrm{HMM})$ in~\cref{thm:eHMM}. The analysis is suitable for cell problems with general boundary conditions including the essential boundary condition, the natural boundary condition, the free boundary condition and the periodic boundary condition. We state the particular result under certain technical assumptions.
\begin{theorem}[Particular result for~\cref{thm:eHMM}]
If $\iota=\mu\varepsilon^\gamma$ with $\gamma>0$, $A^\varepsilon$ is periodic, and $\delta$ is an integer multiple of $\varepsilon$, then
\[e(\mathrm{HMM})\le C(\mu,A)\begin{cases}
        \varepsilon^{2(1-\gamma)} & 0<\gamma<1,\\
        \varepsilon/\delta+h^2/\varepsilon^2 & \gamma=1,\\    \varepsilon/\delta+\varepsilon^{2(\gamma-1)}+h^2/\varepsilon^2 & \gamma>1,\nm{\nabla_{\boldsymbol{y}}A}{L^\infty(\Omega\times Y)}\text{ is bounded}.
\end{cases}
\]   
\end{theorem}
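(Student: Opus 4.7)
The plan is to split $e(\mathrm{HMM})$ into a modeling error (comparing the effective matrix produced by the exact continuous cell problem with the $H$-limit $\bar A$) and a discretization error (from the finite element approximation of the cell problem), and then to analyze each contribution after rescaling the cell problem to the unit cell. I would first invoke the general estimate of \cref{thm:eHMM} and use the explicit identification of $\bar A$ in the periodic regime from \cite{Francfort:1994}: the three cases $\iota/\varepsilon\to 0$, $\iota/\varepsilon\to\mu\in(0,\infty)$ and $\iota/\varepsilon\to\infty$ correspond exactly to $\gamma>1$, $\gamma=1$ and $0<\gamma<1$, and $\bar A$ is, respectively, the classical second-order $H$-limit of the family $\vareps{A}$, a mixed cell average involving both the biharmonic and the divergence-form operators, and the arithmetic mean $\langle A\rangle_Y$.

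The key step is the rescaling $\boldsymbol z=(\mb-\mb_K)/\varepsilon$ that turns the cell problem on a sample of side $\delta$ centered at a macroscopic quadrature point $\mb_K$ into
\[
\mu^2\varepsilon^{2(\gamma-1)}\Delta_{\boldsymbol z}^2\chi-\mathrm{div}_{\boldsymbol z}\bigl(A(\mb_K,\boldsymbol z)(\nabla_{\boldsymbol z}\chi+\boldsymbol e_j)\bigr)=0
\]
on a sample of side $\delta/\varepsilon\in\mathbb N$, keeping the four boundary conditions. For $0<\gamma<1$ the prefactor $\mu^2\varepsilon^{2(\gamma-1)}$ is large, so testing the rescaled equation against $\chi$ and combining the biharmonic coercivity with a Poincar\'e-type inequality gives $\nm{\nabla_{\boldsymbol z}\chi}{L^2}\le C\varepsilon^{1-\gamma}/\mu$; substituting this into the representation of $A_H$ yields $\nm{A_H-\langle A\rangle_Y}{L^\infty}\le C\varepsilon^{2(1-\gamma)}/\mu^2$, and the same biharmonic dominance absorbs the boundary-condition mismatch and the finite element error, which explains the absence of $\varepsilon/\delta$ and $h^2/\varepsilon^2$ in this regime. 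For $\gamma=1$ the rescaled problem is the genuine mixed cell problem; since $\delta=N\varepsilon$, the periodic corrector extends exactly to the sample and the periodic BC produces no resonance, while the other three BCs introduce the standard boundary-corrector error $\mathcal{O}(\varepsilon/\delta)$ by the technique of \cite{Ming:2005}, and $\mathcal{O}(h^2/\varepsilon^2)$ is the $H^2$-conforming interpolation error for a corrector whose second derivatives are of magnitude $\varepsilon^{-2}$. For $\gamma>1$, $\bar A$ coincides with the classical second-order $H$-limit and the biharmonic term is a singular perturbation of the classical cell problem; a boundary-layer expansion of thickness $\iota\ll\varepsilon$ near $\partial Y_\delta$ together with the Lipschitz control on $\nabla_{\boldsymbol y}A$ yields the additional modeling contribution of order $\varepsilon^{2(\gamma-1)}$, while the $\varepsilon/\delta$ and $h^2/\varepsilon^2$ terms are treated as in the case $\gamma=1$.

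I expect the main obstacle to be the boundary-layer analysis for $\gamma>1$: the interior corrector is produced by the second-order operator but the boundary conditions trigger an $\mathcal{O}(\iota)$ layer governed by the biharmonic term, and the bound requires a two-scale energy estimate that balances the biharmonic coercivity of strength $\mu^2\varepsilon^{2\gamma}$ against the $\mathcal{O}(1)$ second-order oscillations, then isolates the layer via a cut-off test function chosen uniformly for all four boundary conditions. The Lipschitz assumption on $\nabla_{\boldsymbol y}A$ enters precisely here, as it is needed to commute differential operators with the boundary-layer corrector without generating uncontrolled cross terms, and it is the reason this regularity hypothesis is required only in the regime $\gamma>1$.
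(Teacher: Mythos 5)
Your decomposition into modeling plus discretization error, the identification of the three regimes of $\bar A$, and the rescaled corrector equation all match the paper's setup, and your treatment of $0<\gamma<1$ and $\gamma=1$ is broadly on the right track (though note two points of vagueness: the gain $\nm{\nabla_{\boldsymbol z}\chi}{L^2}\le C\varepsilon^{1-\gamma}$ does not follow from coercivity plus a Poincar\'e inequality alone --- you must exploit that the forcing $\boldsymbol a_j-\braket{\boldsymbol a_j}_Y$ has zero mean and write it as $\Delta_{\boldsymbol y}$ of a bounded periodic potential, as the paper does with $\mathcal A$, before the biharmonic term can absorb it; and the rates $\varepsilon/\delta$ and $h^2/\varepsilon^2$ in the effective matrix are \emph{squares} of the energy-norm errors $\sqrt{\varepsilon/\delta}$ and $h/\varepsilon$, obtained only through the duality argument with the adjoint cell solution $w_l^\varepsilon$ in \cref{lema:rsn0,lema:rsn1}, not as direct interpolation estimates).

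The genuine gap is in your mechanism for the regime $\gamma>1$. The term $\varepsilon^{2(\gamma-1)}$ is \emph{not} a boundary-layer contribution from $\partial I_\delta$: it is the bulk discrepancy $\nm{\bar A^\gamma-\bar A}{L^\infty}$ between the effective matrix built from the fourth-order \emph{periodic} corrector $\boldsymbol\chi^\gamma$ of \cref{eq:chi} and the one built from the classical second-order corrector $\boldsymbol\chi$, obtained from the purely periodic energy comparison $\nm{\nabla_{\boldsymbol y}(\boldsymbol\chi^\gamma-\boldsymbol\chi)}{L^2(Y)}\le C\varepsilon^{\gamma-1}$ in \cref{eq:chiH1}; the Lipschitz hypothesis on $A$ enters only to guarantee $\boldsymbol\chi\in H^2(Y)$ so that this comparison (and the $H^2$/$H^3$ bounds on $\boldsymbol\chi^\gamma$ needed for the interpolation estimate) closes. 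A boundary layer of thickness $\iota$ at $\partial I_\delta$ would contribute a power of $\iota/\delta$, not $(\iota/\varepsilon)^2$, so your proposed expansion produces the wrong rate. Moreover, the route you sketch --- expanding the actual cell solution near $\partial I_\delta$ and estimating its interpolation error --- is exactly the approach the paper shows degenerates to $\mathcal O(\sqrt{h/\varepsilon})$ in \cref{sec:apd}; the $h^2/\varepsilon^2$ rate is recovered only by interpolating the smooth, periodic first-order approximation $\vareps\rho(\leps V-V_l)$ (\cref{eq:VlepsInter}) and confining the boundary layer to the cut-off region handled by trace inequalities, so this substitution is an essential idea your argument is missing.
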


Compared to the estimates of $e(\mathrm{HMM})$ in~\cite[Theorem 1.2]{Ming:2005} and~\cite[Theorem 3.3]{Du:2010scheme} for the second-order elliptic homogenization problem with locally periodic coefficients, the theorem above includes three key terms on its right-hand side. The term $\mathcal{O}(\varepsilon/\delta)$ represents the resonance error, the term $\mathcal{O}(h^2/\varepsilon^2)$ denotes the discretization error caused by numerically solving the cell problems~\cite{Abdulle:2005}. The term $\mathcal{O}(\varepsilon^{2\abs{1-\gamma}})$ captures the combined effect of homogenization and singular perturbation. Notably, the theorem highlights that both resonance and discretization errors vanish when $0 < \gamma < 1$ and $\vareps A$ is periodic. Specifically, when $\gamma =
1$, no interaction term exists. Finally, we successfully overcome the degeneracy of the discretization error for $\gamma > 1$, which is caused by the emergence of the boundary layer effect~\cite{Semple:1992}. These effects lead to a degeneracy in the discretization error for general singular perturbation problems, scaling to $\mathcal{O}(\sqrt{h/\varepsilon})$ with essential boundary conditions, as detailed in~\cref{sec:apd}. Building upon insights from~\cite{Du:2010scheme}, we leverage the solution structure of the cell problem with locally periodic coefficients to demonstrate a discretization error of $\mathcal{O}(h^2/\varepsilon^2)$.

Our analysis diverges from most previous work by involving different partial differential equations on varying scales, necessitating novel techniques to address the combined effects of singular perturbation and homogenization. To estimate $e(\mathrm{HMM})$, the key methodology in~\cite{Ming:2005, Du:2010} employs the first-order approximation of the cell problem~\cref{eq:veps}, constructed by the corrector $\boldsymbol{\chi}$, and estimates the difference within this approximation; see~\S 3. The formulation of $\boldsymbol{\chi}$ has been derived in~\cite{Francfort:1994}. Specifically, $\boldsymbol{\chi}\equiv 0$ when $0<\gamma<1$, suggesting that the resonance error may be eliminated since the first-order approximation adheres to the boundary conditions of the cell problem. Conversely, when $\gamma>1$, the $H$-limit of~\cref{eq:ueps} aligns with the second-order limit, guiding us to estimate between cell problems of these two types. Our objectives are twofold: firstly, to propose a unified analytical framework for various boundary conditions, and secondly, to mitigate the influence of the boundary layer effect. To this end, we employ the modified corrector proposed by~\cite{Niu:2022}. Different treatment are applied to different scenarios, ultimately producing refined results.

The outline of the paper is as follows. In~\cref{sec:HMM}, we introduce the theory of homogenization and the framework of HMM, and show the well-posedness of the proposed method. In~\cref{sec:eHMM}, we derive the error estimate under certain assumptions on $\iota$ and $A^\varepsilon$. In~\cref{sec:numer}, we employ nonconforming finite elements to solve the cell problems and report numerical results for the problem with two-scale coefficients, which are consistent with the theoretical prediction. The potential of HMM for solving problems without scale separation has been demonstrated in~\cite{Ming:2006numerical}. We conclude the study in~\cref{sec:concl}.

Throughout this paper, the constant $C$ may differ from line to line, while it only depends on the constant $\mu$ and properties for $A$, and it is independent of $\varepsilon,\delta,\gamma$ and meshes size $h$.
\section{Heterogeneous multiscale methods}\label{sec:HMM}
We firstly fix some notations. The space $L^2(\Omega)$ of the square-integrable functions defined on a bounded and convex domain $\Omega$ is equipped with the inner product $(\cdot,\cdot)_\Omega$ and the norm $\nm{\cdot}{L^2(\Omega)}$, while $L_0^2(\Omega)$ is the subspace of $L^2(\Omega)$ with vanishing mean. Let $H^m(\Omega)$ be the standard Sobolev space~\cite{AF:2003} with the norm $\nm{\cdot}{H^m(\Omega)}$, while $H_0^m(\Omega)$ is the closure in $H^m(\Omega)$ of $C_0^\infty(\Omega)$. We may drop $\Omega$  in $\nm{\cdot}{H^m(\Omega)}$ when no confusion may occur. For any function $f$ that is integrable over domain $D$, we denote by $\braket{f}_{D}$ the mean of $f$ over domain $D$.

In this part, we introduce a HMM-FEM to solve~\cref{eq:ueps}, and we make no assumption on the explicit relation between $\iota$ and $\varepsilon$, and the coefficient matrix $A^\varepsilon$.

\subsection{Framework of HMM-FEM}
We employ the linear finite triangular element as the macroscopic solver. Extensions to higher-order finite element macroscopic solvers may be found in in~\cite{Ming:2005,Du:2010scheme, LiMingTang:2012}. Let $X_H$ be the finite element space as
\[
X_H{:}=\Set{Z_H\in H_0^1(\Omega)|Z_H|_K\in\mathbb{P}_1(K)\quad\text{for all }K\in\mathcal{T}_H},
\]
where $\mathcal{T}_H$ is a triangulation of $\Omega$, which consists of simplices $K$ with $h_K$ its diameter and $H{:}=\max_{K\in\mathcal{T}_H}H_K$. We assume that $\mathcal{T}_H$ is shape-regular in the sense of Ciarlet-Raviart~\cite{Ciarlet:2002}: there exists a chunkiness parameter $C$ such that $h_K/\rho_K\le C$, where $\rho_K$ is the diameter of the largest ball inscribed into $K$. We also assume that $\mathcal{T}_H$ satisfies the inverse assumption: there exists $C$ such that $H/H_K\le C$. For any $Z_H\in X_H$, we define $Z_l$ as a linear approximation of $Z_H$ at $\boldsymbol{x}_l\in K$, i.e., $Z_l(\boldsymbol{x})=Z_H(\boldsymbol{x}_l)+(\boldsymbol{x}-\boldsymbol{x}_l)\cdot\na Z_H|_K$. The macroscopic solver aims to find $u_H\in X_H$ such that
\begin{equation}\label{eq:uH}
    a_H(u_H,Z_H)=(f,Z_H)_{\Omega}\quad\text{for all }Z_H\in X_H.
\end{equation}
Here $a_H:X_H\times X_H\to\mathbb{R}$ is defined by
\begin{equation}\label{eq:aH}
    a_H(V_H,Z_H){:}=\sum_{K\in\mathcal{T}_H}\abs{K}\sum_{l=1}^L\omega_l\nabla Z_l\cdot A_H(\boldsymbol{x}_l)\nabla V_l,
\end{equation}
where $\omega_l$ and $\boldsymbol{x}_l$ are the quadrature weights and the quadrature nodes in $K$, respectively. The quadrature scheme is assumed to be exact for linear polynomial. See; e.g.,~\cite{Du:2010scheme}. 

It remains to compute $A_H(\boldsymbol{x}_l)\in\mathbb{R}^{d\times d}$. To this end, we solve
\begin{equation}\label{eq:veps}
    \iota^2\Delta^2\vareps{v}-\mathrm{div}(\vareps{A}\nabla \vareps{v})=0\qquad\text{in }I_\delta,
\end{equation}
where the cell $I_\delta{:}=\boldsymbol{x}_l+\delta Y$ with $\delta$ the cell size. To specify the boundary conditions for~\cref{eq:veps}, we constrain the solution as
\[
\vareps v\in H^2(I_\delta)\quad\text{and\quad} \braket{\nabla\vareps v}_{I_\delta}=\nabla V_l.
\]
In practice, the cell problem has to be solved numerically, and we employ conforming finite element method to discretize the above cell problems. For the sake of simplicity, we only consider the lowest order conforming elements such as the reduced Hsieh-Clough-Tocher element~\cite{Bernadou:1981} and the reduced Powell-Sabin element~\cite{Schumaker:2010}, among many others~\cite[\S 6]{Ciarlet:2002}. Let $\mathcal{T}_h$ be a triangulation of $I_\delta$ by the simplices with maximum mesh size $h$, which is assumed to be shape-regular and satisfies the inverse assumption. Let $X_h\subset H^2(I_\delta)$ be one such finite element space associated with the triangulation $\mathcal{T}_h$, and we assume that there exists an interpolation operator $I_h:H^2(I_\delta)\to X_h$ such that for any $v\in H^k(I_\delta)$ with $k=2,3$,
\begin{equation}\label{eq:inter}
\nm{\nabla^j (I-I_h)v}{L^2(I_\delta)}\le Ch^{k-j}\nm{\nabla^kv}{L^2(I_\delta)}\qquad j=0,1,2.
\end{equation}
The existence of such interpolant may be found in~\cite{Ciarlet:2002}. It is worthwhile to mention that certain nonconforming elements such as Specht triangle~\cite{Specht:1988,LiMingWang:2021} and Nilssen-Tai-Winther element~\cite{Tai:2001}, are also suitable candidates to discretize the cell problem. We refer to~\cref{sec:numer} for more discussions on the nonconforming element discretization.

We shall impose four types of boundary conditions on~\cref{eq:aeps}:
\begin{enumerate}
\item Essential boundary condition: $V_h=X_h\cap H_0^2(I_\delta)$;
\item Natural boundary condition: $V_h=X_h\cap H_0^1(I_\delta)$;
\item Free boundary condition: $V_h=\set{z_h\in X_h\cap L_0^2(I_\delta)|\braket{\nabla z_h}_{I_\delta}=\boldsymbol{0}}$;
\item Periodic boundary condition: $V_h=X_h\cap L_0^2(I_\delta)\cap H_{\mathrm{per}}^2(I_\delta)$.
\end{enumerate}

The variational formulation for~\cref{eq:veps} reads as: Find $\heps{v}-V_l\in V_h$ such that
\begin{equation}\label{eq:aeps}
    \vareps{a}(\heps{v},z_h)=0\quad\text{for all }z_h\in V_h,
\end{equation}
where $\vareps{a}:H^2(I_\delta)\times H^2(I_\delta)\to\mathbb{R}$ is
\[
\vareps{a}(v,z){:}=(\vareps{A}\nabla v,\nabla z)_{I_\delta}+\iota^2(\nabla^2v,\nabla^2z)_{I_\delta}\quad\text{for all\quad}v,z\in H^2(I_\delta).
\]
Then $A_H$ is given by
\begin{equation}\label{eq:AH}
    A_H(\boldsymbol{x}_l)\braket{\nabla \heps{v}}_{I_\delta}{:}=\braket{\vareps{A}\nabla \heps{v}}_{I_\delta}.    
\end{equation}

As to~\cref{eq:veps} with the free boundary condition, the constraint is achieved as in~\cite{Yue:2007} by solving the following variational problem: Find $\vareps v_i\in X_h\cap L_0^2(I_\delta)$ such that
\begin{equation}\label{eq:free}
\vareps a(\vareps v_i,z_h)=\int_{\partial I_\delta}n_iz_h\mathrm{d}\sigma(\boldsymbol{x})\qquad\text{for all}\quad z_h\in X_h\cap L_0^2(I_\delta),
\end{equation}
and~\cref{eq:AH} is equivalent to
\[
    A_H(\boldsymbol{x}_l){:}=\begin{pmatrix}
        \braket{\vareps A\nabla\vareps v_1}_{I_\delta} & \dots &\braket{\vareps A\nabla\vareps v_d}_{I_\delta}
    \end{pmatrix}\begin{pmatrix}
        \braket{\nabla\vareps v_1}_{I_\delta} & \dots & \braket{\nabla\vareps v_d}_{I_\delta}
    \end{pmatrix}^{-1}.
\]

The equivalence between the above two formulations is proved in 
\begin{lemma}\label{lema:free}
The matrix $A_H$ obtained by solving the cell problem~\cref{eq:aeps} subjects to free boundary condition are equivalent with solving~\cref{eq:free}.
\end{lemma}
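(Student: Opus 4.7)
The plan is to use linearity together with the integration-by-parts identity
\[
\int_{\partial I_\delta} n_i z_h \,\mathrm{d}\sigma(\boldsymbol{x}) = \int_{I_\delta}\partial_i z_h \,\mathrm{d}\boldsymbol{x} = \abs{I_\delta}\,\braket{\partial_i z_h}_{I_\delta},
\]
valid for every $z_h\in H^1(I_\delta)$. This converts the mean-gradient constraint $\braket{\nabla z_h}_{I_\delta}=\boldsymbol{0}$ defining $V_h$ into the statement that the right-hand side of~\cref{eq:free} vanishes for every $z_h\in V_h$. Consequently each solution $\vareps v_j$ of~\cref{eq:free} automatically satisfies $\vareps a(\vareps v_j,z_h)=0$ for all $z_h\in V_h$, and by linearity so does any combination $\sum_{j=1}^d c_j\vareps v_j$.

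First I would introduce the ansatz
\[
\heps v=\braket{V_l}_{I_\delta}+\sum_{j=1}^d c_j\vareps v_j,
\]
with coefficients $(c_j)$ to be fixed. Since $\braket{\vareps v_j}_{I_\delta}=0$, the additive constant $\braket{V_l}_{I_\delta}$ secures $\heps v-V_l\in L_0^2(I_\delta)$, while the remaining mean-gradient condition $\braket{\nabla(\heps v-V_l)}_{I_\delta}=\boldsymbol{0}$ reduces to the linear system $M\boldsymbol c=\nabla V_l$, where $M=\bigl(\braket{\nabla\vareps v_1}_{I_\delta}\ \cdots\ \braket{\nabla\vareps v_d}_{I_\delta}\bigr)$ is precisely the matrix appearing on the right in the closed-form expression for $A_H$ stated just after~\cref{eq:free}. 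Assuming $M$ is invertible, $\boldsymbol c=M^{-1}\nabla V_l$ is uniquely determined, and uniqueness of the solution to~\cref{eq:aeps} under the free boundary condition identifies $\heps v$ with the Galerkin solution produced by Formulation~1.

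Finally, since $\braket{\nabla\heps v}_{I_\delta}=\nabla V_l$, the definition~\cref{eq:AH} yields
\[
A_H(\boldsymbol{x}_l)\nabla V_l=\braket{\vareps A\nabla\heps v}_{I_\delta}=\sum_{j=1}^d c_j\,\braket{\vareps A\nabla\vareps v_j}_{I_\delta}=NM^{-1}\nabla V_l,
\]
with $N=\bigl(\braket{\vareps A\nabla\vareps v_1}_{I_\delta}\ \cdots\ \braket{\vareps A\nabla\vareps v_d}_{I_\delta}\bigr)$. As $V_l$ ranges over all linear polynomials, $\nabla V_l$ covers $\mathbb{R}^d$, so the two expressions for $A_H(\boldsymbol{x}_l)$ coincide as matrices, which is precisely the claimed equivalence.

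The main obstacle I anticipate is justifying invertibility of $M$ together with the unique solvability of~\cref{eq:free} on $X_h\cap L_0^2(I_\delta)$. Both rest on coercivity of $\vareps a$ on the zero-mean subspace, which I would derive from~\cref{eq:cvcv} combined with a Poincar\'e-type inequality, in parallel with the well-posedness argument already needed for Formulation~1. Once these standard facts are in hand the decomposition above is unambiguous and the matrix identity follows at once.
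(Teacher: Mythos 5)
Your construction is essentially the paper's own: the integration-by-parts identity turning $\int_{\partial I_\delta}n_iz_h\,\mathrm{d}\sigma$ into $\abs{I_\delta}\braket{\partial_iz_h}_{I_\delta}$, the ansatz $\heps v=\braket{V_l}_{I_\delta}+\sum_jc_j\vareps v_j$ with $M\boldsymbol c=\nabla V_l$, the identification of $\heps v$ with the solution of~\cref{eq:aeps} by uniqueness, and the final computation $A_H(\boldsymbol x_l)\nabla V_l=NM^{-1}\nabla V_l$ all coincide with the paper's argument.

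The one point you defer --- nonsingularity of $M$ --- is, however, not settled by the ingredients you name. Coercivity of $\vareps a$ on $X_h\cap L_0^2(I_\delta)$ via a Poincar\'e inequality gives well-posedness of~\cref{eq:free} and, for a putative null vector $\boldsymbol k$ of $M$, shows that $\vareps v_{\boldsymbol k}:=\sum_jk_j\vareps v_j$ satisfies $\vareps a(\vareps v_{\boldsymbol k},\vareps v_{\boldsymbol k})=\abs{I_\delta}\,\boldsymbol k\cdot M\boldsymbol k=0$ and hence $\vareps v_{\boldsymbol k}\equiv0$. But that alone does not force $\boldsymbol k=\boldsymbol 0$: the functions $\vareps v_j$ could a priori be linearly dependent. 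The missing step is to feed $\vareps v_{\boldsymbol k}\equiv 0$ back into~\cref{eq:free} and test with $z_h=x_i-\braket{x_i}_{I_\delta}$ (admissible since $X_h$ contains linear polynomials), which yields $0=\vareps a(\vareps v_{\boldsymbol k},z_h)=\abs{I_\delta}\,k_i$ for each $i$. This uses that the linear functionals $z_h\mapsto\int_{\partial I_\delta}n_iz_h\,\mathrm{d}\sigma$ are linearly independent on the test space, not coercivity; it is exactly the extra ingredient the paper supplies, and without it the closed-form expression $NM^{-1}$ is not justified.
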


\begin{proof}
Firstly we need to show that the matrix
\[
\begin{pmatrix}
        \braket{\nabla\vareps v_1}_{I_\delta} & \dots & \braket{\nabla\vareps v_d}_{I_\delta}
\end{pmatrix}\]
is nonsingular, where $v_i^\varepsilon\in X_h\cap L_0^2(I_\delta)$ is the solution of~\cref{eq:free}. it suffices to show that a vector $\boldsymbol{k}\in\mathbb R^d$ such that
\[k_1\braket{\nabla\vareps v_1}_{I_\delta}+k_2\braket{\nabla\vareps v_2}_{I_\delta}+\dots+k_d\braket{\nabla\vareps v_d}_{I_\delta}=\boldsymbol{0},
\]
gives $\boldsymbol{k}=\boldsymbol{0}$.

We set $\vareps v_{\boldsymbol{k}}=k_1\vareps v_1+k_2\vareps v_2\dots +k_d\vareps v_d$, then $\braket{\nabla v_{\boldsymbol{k}}^\varepsilon}_{I_\delta}=\boldsymbol{0}$ and $\vareps{v}_{\boldsymbol{k}}\in V_h$. By the linear property of~\cref{eq:free},
\[
a^\varepsilon(\vareps v_{\boldsymbol{k}},z_h)=\sum_{i=1}^dk_i\int_{\partial I_\delta}n_iz_h\mathrm{d}\sigma(\boldsymbol{x})=\abs{I_\delta}\boldsymbol{k}\cdot\braket{\nabla z_h}_{I_\delta}\qquad\text{for all}\quad z_h\in X_h\cap L_0^2(I_\delta).
\]
Firstly taking $z_h=\vareps{v}_{\boldsymbol{k}}$, we obtain $\vareps v_{\boldsymbol{k}}\equiv 0$. Secondly, taking $z_h=x_i-\braket{x_i}_{I_\delta}$, we obtain $k_i=0$ for $i=1,\dots,d$, and hence $\boldsymbol{k}=\boldsymbol{0}$.

Next we let $\vareps v_{\boldsymbol{k}}=k_1\vareps v_1+k_2\vareps v_2\dots +k_d\vareps v_d+\braket{V_l}_{I_\delta}$ such that
\[
k_1\braket{\nabla\vareps v_1}_{I_\delta}+k_2\braket{\nabla\vareps v_2}_{I_\delta}+\dots+k_d\braket{\nabla\vareps v_d}_{I_\delta}=\nabla V_l.
\]
Then $\boldsymbol{k}=(k_1,\cdots,k_d)$ exists and is unique, and $\vareps v_{\boldsymbol{k}}$ solves~\cref{eq:aeps} and
\begin{align*}
    A_H(\boldsymbol{x}_l)\braket{\nabla\vareps v_{\boldsymbol{k}}}_{I_\delta}&=\begin{pmatrix}
        \braket{\vareps a\nabla\vareps v_1}_{I_\delta} & \dots &\braket{\vareps a\nabla\vareps v_d}_{I_\delta}
    \end{pmatrix}\begin{pmatrix}
        \braket{\nabla\vareps v_1}_{I_\delta} & \dots & \braket{\nabla\vareps v_d}_{I_\delta}
    \end{pmatrix}^{-1}\nabla V_l\\
    &=k_1\braket{\vareps A\nabla\vareps  v_1}_{I_\delta}+k_2\braket{\vareps A\nabla\vareps v_2}_{I_\delta}+\dots+k_d\braket{\vareps A\nabla\vareps v_d}_{I_\delta}\\
    &\quad=\braket{\vareps A\nabla\vareps v_{\boldsymbol k}}_{I_\delta}.
    \end{align*}
    This gives~\cref{eq:AH} because $\boldsymbol{k}$ is a constant vector.
\end{proof}
\subsection{Well-posedness of HMM-FEM}
For any $z\in H^2(I_\delta)$, we define the weighted norm
\[
\nm{z}{\iota}{:}=\nm{\nabla z}{L^2(I_\delta)}+\iota\nm{\nabla^2z}{L^2(I_\delta)}.
\]
The wellposedness of~\cref{eq:aeps} is included in the following lemma.
\begin{lemma}
The cell problem~\cref{eq:aeps} admits a unique solution $\heps{v}$ satisfying
\begin{equation}\label{eq:aepsH2}
    \nm{\heps{v}}{\iota}\le(\sqrt{\Lambda}+\sqrt{\Lambda/\lambda})\nm{\nabla V_l}{L^2(I_\delta)}.
\end{equation}
\end{lemma}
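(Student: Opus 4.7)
The plan is to treat~\cref{eq:aeps} as a finite-dimensional linear system on $V_h$ and deduce both solvability and the a priori bound from a single energy argument. Since $V_h$ is finite-dimensional, well-posedness reduces to injectivity. The first inequality in~\cref{eq:cvcv} gives
\[
\vareps{a}(z,z) \ge \lambda \nm{\nabla z}{L^2(I_\delta)}^2 + \iota^2 \nm{\nabla^2 z}{L^2(I_\delta)}^2,
\]
so $\vareps{a}(z,z)=0$ forces $z$ to be constant, and in each of the four choices of $V_h$ this constant must vanish: cases (1)--(2) by the homogeneous Dirichlet traces, cases (3)--(4) by the zero-mean condition. This yields existence and uniqueness of $\heps{v}$ for each prescribed $V_l$.

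For the a priori estimate I would test the variational equation with the natural choice $z_h = \heps{v} - V_l \in V_h$. Since $V_l$ is affine we have $\nabla^2 V_l = 0$, hence
\[
\vareps{a}(\heps{v},\heps{v}) = \vareps{a}(\heps{v}, V_l) = (\vareps{A}\nabla\heps{v}, \nabla V_l)_{I_\delta}.
\]
The crux is to use the second structural inequality in~\cref{eq:cvcv}, namely $\abs{\vareps{A}\boldsymbol{\xi}}^2 \le \Lambda\,\vareps{A}\boldsymbol{\xi}\cdot\boldsymbol{\xi}$, to obtain a weighted Cauchy--Schwarz
\[
\abs{(\vareps{A}\nabla\heps{v}, \nabla V_l)_{I_\delta}} \le \nm{\vareps{A}\nabla\heps{v}}{L^2(I_\delta)}\nm{\nabla V_l}{L^2(I_\delta)} \le \sqrt{\Lambda(\vareps{A}\nabla\heps{v},\nabla\heps{v})_{I_\delta}}\,\nm{\nabla V_l}{L^2(I_\delta)}.
\]

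Setting $X := (\vareps{A}\nabla\heps{v},\nabla\heps{v})_{I_\delta}^{1/2}$ and $Y := \iota\nm{\nabla^2\heps{v}}{L^2(I_\delta)}$ reduces the whole argument to
\[
X^2 + Y^2 \le \sqrt{\Lambda}\,X\,\nm{\nabla V_l}{L^2(I_\delta)}.
\]
Dropping $Y^2$ yields $X \le \sqrt{\Lambda}\,\nm{\nabla V_l}{L^2(I_\delta)}$, substituting back yields $Y \le \sqrt{\Lambda}\,\nm{\nabla V_l}{L^2(I_\delta)}$, and the coercivity bound $\lambda\nm{\nabla\heps{v}}{L^2(I_\delta)}^2 \le X^2$ promotes the $X$-estimate to $\nm{\nabla\heps{v}}{L^2(I_\delta)} \le \sqrt{\Lambda/\lambda}\,\nm{\nabla V_l}{L^2(I_\delta)}$. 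Summing these two pieces gives exactly~\cref{eq:aepsH2}.

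The one non-routine ingredient is the weighted Cauchy--Schwarz above; a naive estimate $\abs{(\vareps{A}\nabla\heps{v},\nabla V_l)_{I_\delta}} \le \Lambda\,\nm{\nabla\heps{v}}{L^2(I_\delta)}\,\nm{\nabla V_l}{L^2(I_\delta)}$ would give a constant of order $\Lambda/\lambda$ rather than the sharper $\sqrt{\Lambda/\lambda}$. Keeping $X$ and $Y$ as independent unknowns in a single quadratic inequality then allows the $\nabla$- and $\nabla^2$-contributions to be extracted separately, each with the optimal $\sqrt{\Lambda}$ factor.
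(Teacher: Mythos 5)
Your proposal is correct and follows essentially the same route as the paper: the same test function $z_h=\heps{v}-V_l$, the same weighted Cauchy--Schwarz via $\abs{\vareps{A}\boldsymbol{\xi}}^2\le\Lambda\,\vareps{A}\boldsymbol{\xi}\cdot\boldsymbol{\xi}$, and the same separate extraction of the gradient and Hessian bounds; the only cosmetic difference is that you argue well-posedness by finite-dimensional injectivity where the paper invokes Lax--Milgram after checking that $\nm{\cdot}{\iota}$ is a norm on $V_h$.
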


\begin{proof}
For any $z_h\in V_h$, by the Poincar\'e inequality, there exists a constant $C_p$ such that
\[
\nm{z_h}{H^1(I_\delta)}\le C_p \nm{\na z_h}{I_\delta}.
\]
Hence, 
\[
\nm{z_h}{\iota}\ge\min(1/C_p,\iota)\nm{z_h}{H^2(I_\delta)}.
\]
This means that for any fixed $\iota$, the weighted norm $\nm{\cdot}{\iota}$ is indeed a norm over $V_h$. Note that $a^\varepsilon$ is bounded and coercive on $V_h$ with norm $\nm{\cdot}{\iota}$, i.e., for any $z_h\in V_h$,
\[
a_h(z_h,z_h)\ge\dfrac12\min(\lambda,1)\nm{z_h}{\iota}^2.
\]
Hence, the cell problem~\cref{eq:aeps} admits a unique solution by the Lax-Milgram theorem.

Next, we choose $z_h=\heps{v}-V_l\in V_h$ in~\cref{eq:aeps} and obtain
\begin{align*}
\vareps{a}(\heps{v},\heps{v})&=(\vareps{A}\nabla \heps{v},\nabla V_l)\le\nm{\vareps{A}\nabla\heps{v}}{L^2(I_\delta)}\nm{\nabla V_l}{L^2(I_\delta)}\\
&\le\sqrt{\Lambda}(\vareps{A}\nabla\heps{v},\nabla\heps{v})^{1/2}\nm{\nabla V_l}{L^2(I_\delta)}.  
\end{align*}
This immediately implies
\[
(\vareps{A}\nabla\heps{v},\nabla\heps{v})\le\Lambda\nm{\na V_l}{L^2(I_\delta)}^2,
\]
and
\[
\iota^2\nm{\nabla^2\heps v}{L^2(I_\delta)}^2\le\sqrt{\Lambda}(\vareps{A}\nabla\heps{v},\nabla\heps{v})^{1/2}\nm{\nabla V_l}{L^2(I_\delta)}
\le\Lambda\nm{\na V_l}{L^2(I_\delta)}^2.
\]
A combination of the above two inequalities implies~\cref{eq:aepsH2}. 
\end{proof}

In the next lemma, we shall show that~\cref{eq:AH} satisfies the Hill's condition~\cite{Hill:1963}.
\begin{lemma}\label{lema:Hill}
There holds    
    \begin{equation}\label{eq:Hill}
    \braket{\nabla\heps z}_{I_\delta}\cdot A_H(\boldsymbol{x}_l)\braket{\nabla \heps{v}}_{I_\delta}=\braket{\nabla\heps z\cdot \vareps{A}\nabla \heps{v}}_{I_\delta}+\iota^2\braket{\nabla^2\heps z:\nabla^2\heps v}_{I_\delta},
  \end{equation}
where $\heps z$ is the solution of~\cref{eq:aeps} with $V_l$ replaced by any linear function $Z_l$.
\end{lemma}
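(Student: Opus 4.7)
The plan is to reduce~\cref{eq:Hill} to the Galerkin orthogonality of the cell problem~\cref{eq:aeps} tested against $\heps z-Z_l\in V_h$. Two ingredients are needed: a preliminary identification of $\braket{\nabla\heps z}_{I_\delta}$ with the constant vector $\nabla Z_l$, and then a short algebraic manipulation of the variational equation defining $\heps v$.

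First I would verify, case by case across the four boundary conditions, that $\braket{\nabla\heps z}_{I_\delta}=\nabla Z_l$; note that $\nabla Z_l$ is already a constant since $Z_l$ is affine. For the essential and natural boundary conditions, $\heps z-Z_l$ vanishes on $\partial I_\delta$, so the divergence theorem gives $\int_{I_\delta}\nabla(\heps z-Z_l)=\int_{\partial I_\delta}(\heps z-Z_l)\boldsymbol n\,\mathrm{d}\sigma=\boldsymbol 0$. For the free boundary condition, the identity is built directly into the definition $V_h=\set{z_h\in X_h\cap L_0^2(I_\delta)|\braket{\nabla z_h}_{I_\delta}=\boldsymbol 0}$. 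For the periodic boundary condition, opposite-face contributions to the boundary integral of $(\heps z-Z_l)\boldsymbol n$ cancel, so once again the mean gradient of the periodic part $\heps z-Z_l$ is zero.

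With this identification in hand, the left-hand side of~\cref{eq:Hill} can be recast by pulling the constant $\nabla Z_l$ under the cell average,
\[
\braket{\nabla\heps z}_{I_\delta}\cdot A_H(\boldsymbol x_l)\braket{\nabla\heps v}_{I_\delta}
=\nabla Z_l\cdot\braket{\vareps A\nabla\heps v}_{I_\delta}
=\braket{\nabla Z_l\cdot\vareps A\nabla\heps v}_{I_\delta},
\]
and splitting $\nabla Z_l=\nabla\heps z-\nabla(\heps z-Z_l)$ turns the right-hand side into
\[
\braket{\nabla\heps z\cdot\vareps A\nabla\heps v}_{I_\delta}
-\abs{I_\delta}^{-1}(\vareps A\nabla\heps v,\nabla(\heps z-Z_l))_{I_\delta}.
\]

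Finally, since $\heps z-Z_l\in V_h$ is an admissible test function in~\cref{eq:aeps} and $\nabla^2 Z_l\equiv 0$ because $Z_l$ is affine, the variational identity yields
\[
(\vareps A\nabla\heps v,\nabla(\heps z-Z_l))_{I_\delta}=-\iota^2(\nabla^2\heps v,\nabla^2\heps z)_{I_\delta},
\]
which, substituted above, produces~\cref{eq:Hill}. The only genuine obstacle is the unified case analysis establishing $\braket{\nabla\heps z}_{I_\delta}=\nabla Z_l$ across all four boundary conditions so they can be treated on an equal footing; once this is in place the remaining step is a one-line application of Galerkin orthogonality together with the vanishing of $\nabla^2 Z_l$.
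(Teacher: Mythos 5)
Your proof is correct and follows essentially the same route as the paper: both rest on the identity $\braket{\nabla\heps z}_{I_\delta}=\nabla Z_l$ (the paper's \cref{eq:zepsAvrg}) together with Galerkin orthogonality of \cref{eq:aeps} tested against $\heps z-Z_l\in V_h$ and the vanishing of $\nabla^2 Z_l$; your splitting of $\nabla Z_l=\nabla\heps z-\nabla(\heps z-Z_l)$ is just a rearrangement of the paper's computation $\vareps a(\heps v,\heps z)=\vareps a(\heps v,Z_l)$. The only difference is that you spell out the four-case verification of the mean-gradient identity, which the paper leaves implicit.
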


\begin{proof}
It follows from $\heps z-Z_l\in V_h$ that
    \begin{equation}\label{eq:zepsAvrg}
  \braket{\nabla\heps z}_{I_\delta}=\nabla Z_l,
\end{equation}
and using~\cref{eq:aeps} , we obtain
    \[
    \vareps a(\heps v,\heps z)=\vareps a(\heps v,Z_l)=(\vareps A\nabla\heps v,\nabla Z_l)_{I_\delta}=\abs{I_\delta}\nabla Z_l\cdot\braket{\vareps A\nabla\heps v}_{I_\delta},
\]
which gives~\cref{eq:Hill} with~\cref{eq:AH} and~\cref{eq:zepsAvrg}.
\end{proof}

It follows from the Hill's condition that $A_H\in\mathcal{M}(\lambda,\Lambda;\Omega)$. 
\begin{lemma}\label{lema:cvcv}
There holds $A_H\in\mathcal{M}(\lambda,\Lambda;\Omega)$.
\end{lemma}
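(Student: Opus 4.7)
The plan is to read off both defining inequalities of $\mathcal{M}(\lambda,\Lambda;\Omega)$ directly from Hill's condition~\cref{eq:Hill} combined with the two-sided bound~\cref{eq:cvcv} on $\vareps{A}$, using Jensen's inequality on the averages.

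First, I would fix $\boldsymbol{\xi}\in\mathbb{R}^d$ and pick a linear function $V_l$ with $\nabla V_l=\boldsymbol{\xi}$. Let $\heps{v}$ be the corresponding solution of~\cref{eq:aeps}. Since $\heps{v}-V_l\in V_h$, averaging gives $\braket{\nabla\heps{v}}_{I_\delta}=\boldsymbol{\xi}$ (exactly as in~\cref{eq:zepsAvrg}). Taking $\heps{z}=\heps{v}$ in~\cref{eq:Hill} yields
\[
\boldsymbol{\xi}\cdot A_H(\boldsymbol{x}_l)\boldsymbol{\xi}
=\braket{\nabla\heps{v}\cdot\vareps{A}\nabla\heps{v}}_{I_\delta}
+\iota^2\braket{\abs{\nabla^2\heps{v}}^2}_{I_\delta},
\]
which is the key identity for both estimates.

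For the first defining inequality of $\mathcal{M}(\lambda,\Lambda;\Omega)$, I would discard the nonnegative curvature term, apply the pointwise lower bound $\vareps{A}\boldsymbol{\eta}\cdot\boldsymbol{\eta}\ge\lambda\abs{\boldsymbol{\eta}}^2$ inside the average, and then use Jensen's inequality $\braket{\abs{\nabla\heps{v}}^2}_{I_\delta}\ge\abs{\braket{\nabla\heps{v}}_{I_\delta}}^2=\abs{\boldsymbol{\xi}}^2$ to conclude $A_H(\boldsymbol{x}_l)\boldsymbol{\xi}\cdot\boldsymbol{\xi}\ge\lambda\abs{\boldsymbol{\xi}}^2$. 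For the second one, I would first apply the pointwise bound $\vareps{A}\boldsymbol{\eta}\cdot\boldsymbol{\eta}\ge\Lambda^{-1}\abs{\vareps{A}\boldsymbol{\eta}}^2$ inside the average and again Jensen to get
\[
\braket{\nabla\heps{v}\cdot\vareps{A}\nabla\heps{v}}_{I_\delta}
\ge\tfrac{1}{\Lambda}\braket{\abs{\vareps{A}\nabla\heps{v}}^2}_{I_\delta}
\ge\tfrac{1}{\Lambda}\abs{\braket{\vareps{A}\nabla\heps{v}}_{I_\delta}}^2
=\tfrac{1}{\Lambda}\abs{A_H(\boldsymbol{x}_l)\boldsymbol{\xi}}^2,
\]
using the defining identity~\cref{eq:AH}. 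Dropping the curvature term in the Hill identity again gives the desired inequality.

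There is no real obstacle here: the entire argument is a pointwise use of the two bounds in~\cref{eq:cvcv} followed by Jensen, and the higher-order term $\iota^2\braket{\abs{\nabla^2\heps{v}}^2}_{I_\delta}$ only helps (it can be thrown away when proving lower bounds). The main conceptual point worth flagging is that both estimates are uniform in $\iota$, $\varepsilon$, $\delta$ and $h$, because neither the application of~\cref{eq:cvcv} nor Jensen's inequality introduces constants depending on these parameters; this is exactly what is needed so that the overall macroscopic bound $\nm{\nabla(\bar{u}-u_H)}{L^2(\Omega)}\le C(\lambda,\Lambda,\bar{u})(H+e(\mathrm{HMM}))$ stated in the introduction is not polluted by the cell-scale parameters.
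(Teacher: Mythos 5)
Your proof is correct and follows essentially the same route as the paper: take $V_l$ with $\nabla V_l=\boldsymbol{\xi}$, invoke Hill's condition with $\heps z=\heps v$, drop the nonnegative curvature term, and apply the two pointwise bounds from~\cref{eq:cvcv} together with Jensen's inequality on the averages and the definition~\cref{eq:AH}. No gaps; the remark about uniformity in $\iota,\varepsilon,\delta,h$ is a correct and relevant observation.
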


\begin{proof}
For any $\boldsymbol{\xi}\in\R^d$, let $\heps{v}\in V_h$ be the solution of~\cref{eq:aeps} with $V_l=\boldsymbol{\xi}\cdot\boldsymbol{x}$.  By Hill's condition~\cref{eq:Hill},
  \[
A_H(\boldsymbol{x}_l)\boldsymbol{\xi}\cdot\boldsymbol{\xi}=\braket{\vareps{A}\na\heps{v}\cdot\na \heps{v}}_{I_\delta}+\iota^2\braket{\abs{\nabla^2\heps v}^2}_{I_\delta}
\ge\braket{\vareps{A}\na\heps{v}\cdot\na\heps{v}}_{I_\delta}.
  \]
  Hence,
  \[
A_H(\boldsymbol{x}_l)\boldsymbol{\xi}\cdot\boldsymbol{\xi}\ge\lambda\braket{\abs{\na\heps{v}}^2}_{I_\delta}
\ge\lambda\abs{\braket{\na\heps{v}}_{I_\delta}}^2=\lambda\abs{\boldsymbol{\xi}}^2.
\]
On the other hand,
\begin{align*}
A_H(\boldsymbol{x}_l)\boldsymbol{\xi}\cdot\boldsymbol{\xi}&\ge\dfrac{1}{\Lambda}\braket{\abs{\vareps{A}\na\heps{v}}^2}_{I_\delta}  \ge\dfrac1{\Lambda}\abs{\braket{\vareps{A}\na\heps{v}}_{I_\delta}}^2=\dfrac1{\Lambda}\abs{A_H(\boldsymbol{x}_l)\braket{\na\heps{v}}_{I_\delta}}^2\\ &=\dfrac1{\Lambda}\abs{A_H(\boldsymbol{x}_l)\boldsymbol{\xi}}^2.
\end{align*}
A combination of the above two inequalities gives $A_H\in\mathcal{M}(\lambda,\Lambda;\Omega)$.
\end{proof}

Lemma~\ref{lema:cvcv} gives the existence and uniqueness of $u_H$ by the Lax-Milgram theorem, which immediately implies the error estimate for the homogenized solution.

\begin{lemma}\label{lema:macro}
There exists $C$ independent of $\varepsilon,\iota,\delta$ and $H$ such that
\begin{equation}\label{eq:uHH1}
    \nm{\nabla(\bar{u}-u_H)}{L^2(\Omega)}\le C\Bigl(H\nm{\Bar{u}}{H^2(\Omega)}+e(\mathrm{HMM})\nm{f}{H^{-1}(\Omega)}\Bigr),
\end{equation}
where
\[
e(\mathrm{HMM}){:}=\max_{K\in\mathcal{T}_H,\boldsymbol{x}_l\in K}\abs{(\bar{A}-A_H)(\boldsymbol{x}_l)},
\]
and $\abs{\cdot}$ denotes the $\ell^2$-norm of matrices and vectors.     
\end{lemma}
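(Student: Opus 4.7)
My plan is to combine a Lax--Milgram existence argument with a Strang-type splitting that isolates the coefficient error in a single discrete estimate. By~\cref{lema:cvcv} we have $A_H\in\mathcal{M}(\lambda,\Lambda;\Omega)$; using that the quadrature in~\cref{eq:aH} is exact on linear polynomials and that $\na Z_H$ is piecewise constant on $\mathcal{T}_H$, the form $a_H$ is coercive and continuous on $X_H$ with respect to the $H^1_0(\Omega)$ norm. Lax--Milgram thus yields a unique $u_H\in X_H$ solving~\cref{eq:uH}, together with the a priori bound $\nm{\na u_H}{L^2(\Omega)}\le C\nm{f}{H^{-1}(\Omega)}$.

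To estimate $\bar u-u_H$, I would introduce the auxiliary discrete solution $\hat u_H\in X_H$ that employs the true effective matrix $\bar A$ under the same quadrature, namely
\begin{equation*}
\sum_{K\in\mathcal{T}_H}\abs{K}\sum_{l=1}^L\omega_l\,\na Z_l\cdot\bar A(\boldsymbol{x}_l)\na(\hat u_H)_l=(f,Z_H)_\Omega\quad\text{for all }Z_H\in X_H,
\end{equation*}
and split $\bar u-u_H=(\bar u-\hat u_H)+(\hat u_H-u_H)$. The first piece is the standard linear Lagrange finite element approximation of the homogenized problem~\cref{eq:u} with a quadrature scheme that is exact on $\mathbb{P}_1$, so the classical Ciarlet--Raviart analysis (see, e.g.,~\cite{Ciarlet:2002}) yields $\nm{\na(\bar u-\hat u_H)}{L^2(\Omega)}\le CH\nm{\bar u}{H^2(\Omega)}$.

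For the second piece, I let $W_H{:}=\hat u_H-u_H\in X_H$ and exploit the coercivity of $a_H$ together with the discrete equations for $\hat u_H$ and $u_H$:
\begin{align*}
\lambda\nm{\na W_H}{L^2(\Omega)}^2&\le a_H(W_H,W_H)=a_H(\hat u_H,W_H)-(f,W_H)_\Omega\\
&=\sum_{K\in\mathcal{T}_H}\abs{K}\sum_{l=1}^L\omega_l\,\na W_l\cdot(A_H-\bar A)(\boldsymbol{x}_l)\na(\hat u_H)_l.
\end{align*}
Bounding $(A_H-\bar A)(\boldsymbol{x}_l)$ in the $\ell^2$ matrix norm by $e(\mathrm{HMM})$, applying a discrete Cauchy--Schwarz using the positivity of $\omega_l$ and the exactness of the quadrature on constants (so that $\sum_K\abs{K}\sum_l\omega_l\abs{\na W_l}^2=\nm{\na W_H}{L^2(\Omega)}^2$ and similarly for $\hat u_H$), and invoking the a priori bound $\nm{\na\hat u_H}{L^2(\Omega)}\le C\nm{f}{H^{-1}(\Omega)}$, I conclude $\nm{\na W_H}{L^2(\Omega)}\le Ce(\mathrm{HMM})\nm{f}{H^{-1}(\Omega)}$. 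Combining both pieces through the triangle inequality gives~\cref{eq:uHH1}.

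The delicate step is the Ciarlet--Raviart estimate on $\bar u-\hat u_H$: since $\bar A$ is generally only in $[L^\infty(\Omega)]^{d\times d}$, the quadrature consistency error $\int_K\bar A-\abs{K}\sum_l\omega_l\bar A(\boldsymbol{x}_l)$ cannot be controlled by $H$ without some regularity of $\bar A$. I would therefore let $C$ absorb this structural dependence, which is consistent with the lemma's claim of independence from $\varepsilon,\iota,\delta,H$ only.
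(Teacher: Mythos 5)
Your argument is correct and is essentially the paper's proof: the paper simply invokes \cite[Theorem 1.1]{Ming:2005} together with \cref{lema:cvcv}, and the cited theorem is established by exactly the Strang-type splitting you carry out (auxiliary discrete solution with the exact effective matrix under the same quadrature, standard $O(H)$ estimate for the first piece, coercivity of $a_H$ from $A_H\in\mathcal{M}(\lambda,\Lambda;\Omega)$ for the second). Your closing caveat about the quadrature consistency error for a merely $L^\infty$ effective matrix $\bar A$ is a real subtlety, but it is shared by the paper's own statement and is resolved there in the same spirit (regularity of $\bar A$ in the locally periodic setting, Lebesgue points in general), so it does not constitute a gap relative to the paper.
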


\begin{proof}
    This is a direct consequence of~\cite[Theorem 1.1]{Ming:2005} and~\cref{lema:cvcv}.
\end{proof}%

For general coefficients, it has been shown in~\cite[Lemma 2.2]{Du:2010} that if all the quadrature nodes $\boldsymbol{x}_l$ are Lebesgue points of $\Bar{A}$, then
\[
\lim_{\delta\to 0}\lim_{\varepsilon\to 0}e(\mathrm{HMM})=0.
\]
In the next section, we will derive how $e(\mathrm{HMM})$ relies on $\delta$ and $\varepsilon$ for locally periodic coefficients.

\section{Error estimates for $e$(HMM) with locally periodic media}\label{sec:eHMM}
In this part, we estimate $e$(HMM) for the locally periodical coefficients $A^\varepsilon$. We assume that $\iota=\mu\varepsilon^\gamma$ with $\gamma>0$ and a positive constant $\mu$ and $\vareps{A}(\boldsymbol{x})=A(\boldsymbol{x},\boldsymbol{x}/\varepsilon)$ where $A\in [C^{0,1}(\Omega;L^\infty(\mathbb{R}^d))]^{d\times d}$ and $A(\boldsymbol{x},\cdot)$ is $Y$-periodic. 
By~\cite[Theorem 6.3, Theorem 14.5]{Bensoussan:2011} and~\cite[Theorem 1.3]{Francfort:1994}, the effective matrix $\bar{A}$ in~\cref{eq:u} is given by
\[
    \bar{A}(\boldsymbol{x})=\int_{Y}(A+A\nabla^\top_{\boldsymbol{y}}\boldsymbol{\chi})(\boldsymbol{x},\boldsymbol{y})\mathrm{d}\boldsymbol{y},  
\]
where the corrector $\boldsymbol{\chi}=\boldsymbol{0}$ when $0<\gamma<1$. When $\gamma=1$, the corrector $\boldsymbol{\chi}\in[L^\infty(\Omega;H^2(Y))]^d$ satisfies
\[
\left\{\begin{aligned}
    \mu^2\Delta^2_{\boldsymbol{y}}\chi_j-\mathrm{div}_{\boldsymbol{y}}(A\nabla_{\boldsymbol{y}}\chi_j)&=\mathrm{div}_{\boldsymbol{y}}\boldsymbol{a}_j&&\text{in }Y,\\
    \chi_j(\boldsymbol{x},\cdot)&\text{ is }Y\text{-periodic}&&\braket{\chi_j}_{Y}=0,
\end{aligned}\right.
\]
where $\boldsymbol{a}_j$ is the $j$-th column of $A$ with $A=[\boldsymbol{a}_1,\boldsymbol{a}_2\dots,\boldsymbol{a}_d]$. 

When $\gamma>1$, the corrector $\boldsymbol{\chi}\in[L^\infty(\Omega;H^1(Y))]^d$ satisfies
\[
\left\{\begin{aligned}
    -\mathrm{div}_{\boldsymbol{y}}(A\nabla_{\boldsymbol{y}}\chi_j)&=\mathrm{div}_{\boldsymbol{y}}\boldsymbol{a}_j&&\text{in }Y,\\
    \chi_j(\boldsymbol{x},\cdot)&\text{ is }Y\text{-periodic}&&\braket{\chi_j}_{Y}=0.
\end{aligned}\right.
\]

\subsection{Preliminary}
Denote by $\leps{v}$ the solution of~\cref{eq:aeps} with $\vareps{A}$ replaced by $\leps{A}{:}=\vareps A(\boldsymbol{x}_l,\cdot/\varepsilon)$; i.e., find $\leps v-V_l\in V_h$ such that
\begin{equation}\label{eq:vleps}
    \leps a(\leps v,z_h)=0\qquad\text{for all}\quad z_h\in V_h,
\end{equation}
where $\leps a$ is the same with $\vareps a$ provided that $\vareps{A}$ replaced by $\leps{A}$. The first order approximation of $\leps{v}$ is 
\[
\leps{V}(\boldsymbol{x}){:}=V_l(\boldsymbol{x})+\varepsilon\boldsymbol{\chi}^\gamma(\boldsymbol{x}_l,\boldsymbol{x}/\varepsilon)\cdot\nabla V_l,
\]
where $\boldsymbol{\chi}^\gamma{:}=\boldsymbol{\chi}=\boldsymbol{0}$ when $0<\gamma<1$, and $\boldsymbol{\chi}^\gamma\in [L^\infty(\Omega;H^2(Y))]^d$ is the solution of
\begin{equation}\label{eq:chi}
\left\{\begin{aligned}
\mu^2\varepsilon^{2(\gamma-1)} \Delta^2_{\boldsymbol{y}}\chi^\gamma_j-\mathrm{div}_{\boldsymbol{y}}(A\nabla_{\boldsymbol{y}}\chi^\gamma_j)
&=\mathrm{div}_{\boldsymbol{y}}\boldsymbol{a}_j\quad&&\text{in\;}Y,\\
\chi^\gamma_j(\boldsymbol{x},\cdot)
&\text{\;is\;}Y\text{-periodic}\quad&&\braket{\chi^{\gamma}_j}_Y=0,
\end{aligned}\right.
\end{equation}
when $\gamma\ge 1$. This corrector is a modification of that defined in~\cite[\S 3]{Niu:2022}. Moreover, there exists $C$ independent of $\varepsilon$ such that
\begin{equation}\label{eq:chiH3}
\begin{aligned}
\nm{\boldsymbol{\chi}^\gamma}{L^\infty(\Omega;H^1(Y))}&+\varepsilon^{\gamma-1}\nm{\nabla^2_{\boldsymbol{y}}\boldsymbol{\chi}^\gamma}{L^\infty(\Omega;L^2(Y))}\\
&\quad+\varepsilon^{2(\gamma-1)}\nm{\nabla^3_{\boldsymbol{y}}\boldsymbol{\chi}^\gamma}{L^\infty(\Omega;L^2(Y))}\le C.
\end{aligned}
\end{equation} 

If $\leps{A}$ is smoother, then~\cref{eq:chiH3} may be improved and $\boldsymbol{\chi}^\gamma\to\boldsymbol{\chi}$ when $\gamma\to\infty$ as shown below.
\begin{lemma}
If $\gamma>1$ and $\nm{\nabla_{\boldsymbol{y}}A}{L^\infty(\Omega\times Y)}$ is bounded, then
\begin{equation}\label{eq:chiH1}
\nm{\nabla_{\boldsymbol{y}}(\boldsymbol{\chi}^\gamma-\boldsymbol{\chi})}{L^\infty(\Omega;L^2(Y))}\le C\varepsilon^{\gamma-1},
\end{equation}
and
\begin{equation}\label{eq:chiH23}
\nm{\boldsymbol{\chi}^\gamma}{L^\infty(\Omega;H^2(Y))}+\varepsilon^{\gamma-1}\nm{\nabla_{\boldsymbol{y}}^3\boldsymbol{\chi}^\gamma}{L^\infty(\Omega;L^2(Y))}\le C.
\end{equation}
\end{lemma}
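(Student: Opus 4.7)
The plan is to read both inequalities off from a single energy identity for $\boldsymbol{w}^\gamma:=\boldsymbol{\chi}^\gamma-\boldsymbol{\chi}$ and, separately, from an analogous identity for the differentiated corrector $\partial_{y_k}\chi^\gamma_j$. A preparatory observation I need is that when $\nm{\nabla_{\boldsymbol{y}} A}{L^\infty(\Omega\times Y)}$ is bounded, the second-order corrector $\boldsymbol{\chi}$ belongs to $[L^\infty(\Omega;H^2(Y))]^d$. This follows from the non-divergence form rewriting
\[
-A_{ij}\partial_i\partial_j\chi_j=\partial_i a_i+(\partial_i A_{ij})\partial_j\chi_j\in L^2(Y),
\]
together with the standard $H^2$ regularity on the torus for the uniformly elliptic operator $-A_{ij}\partial_i\partial_j$ with continuous coefficients.

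For~\cref{eq:chiH1}, subtracting the second-order cell problem for $\chi_j$ from~\cref{eq:chi} shows that at each fixed $\boldsymbol{x}\in\Omega$ the function $w^\gamma_j$ is $Y$-periodic, has zero mean, and satisfies
\[
\mu^2\varepsilon^{2(\gamma-1)}\Delta^2_{\boldsymbol{y}}w^\gamma_j-\mathrm{div}_{\boldsymbol{y}}(A\nabla_{\boldsymbol{y}}w^\gamma_j)=-\mu^2\varepsilon^{2(\gamma-1)}\Delta^2_{\boldsymbol{y}}\chi_j.
\]
Testing against $w^\gamma_j$ and integrating by parts (no boundary terms, by periodicity) twice on each side yields
\[
\mu^2\varepsilon^{2(\gamma-1)}\nm{\nabla^2_{\boldsymbol{y}}w^\gamma_j}{L^2(Y)}^2+\int_Y A\nabla_{\boldsymbol{y}}w^\gamma_j\cdot\nabla_{\boldsymbol{y}}w^\gamma_j\,\mathrm{d}\boldsymbol{y}=-\mu^2\varepsilon^{2(\gamma-1)}\int_Y\nabla^2_{\boldsymbol{y}}\chi_j:\nabla^2_{\boldsymbol{y}}w^\gamma_j\,\mathrm{d}\boldsymbol{y}.
\]
A Cauchy--Schwarz/Young split on the right absorbs half of the biharmonic piece on the left, leaving $\lambda\nm{\nabla_{\boldsymbol{y}}w^\gamma_j}{L^2(Y)}^2\le C\varepsilon^{2(\gamma-1)}\nm{\nabla^2_{\boldsymbol{y}}\chi_j}{L^2(Y)}^2$. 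By the preparatory $H^2$ bound on $\boldsymbol{\chi}$ this is $\le C\varepsilon^{2(\gamma-1)}$, which is~\cref{eq:chiH1}. The same inequality also gives $\nm{\nabla^2_{\boldsymbol{y}}w^\gamma_j}{L^2(Y)}\le C$, so the triangle inequality produces the $L^\infty(\Omega;H^2(Y))$ part of~\cref{eq:chiH23}.

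For the $\nabla^3_{\boldsymbol{y}}\boldsymbol{\chi}^\gamma$ estimate I differentiate~\cref{eq:chi} in $y_k$. The function $\phi_{jk}:=\partial_{y_k}\chi^\gamma_j$ is $Y$-periodic, has zero mean, and satisfies
\[
\mu^2\varepsilon^{2(\gamma-1)}\Delta^2_{\boldsymbol{y}}\phi_{jk}-\mathrm{div}_{\boldsymbol{y}}(A\nabla_{\boldsymbol{y}}\phi_{jk})=\mathrm{div}_{\boldsymbol{y}}\bigl[(\partial_{y_k}A)\nabla_{\boldsymbol{y}}\chi^\gamma_j+\partial_{y_k}\boldsymbol{a}_j\bigr].
\]
Testing against $\phi_{jk}$ and integrating by parts, the right-hand side is dominated, via Young's inequality, by $\tfrac{\lambda}{2}\nm{\nabla_{\boldsymbol{y}}\phi_{jk}}{L^2(Y)}^2$ plus $C\bigl(\nm{\nabla_{\boldsymbol{y}}A}{L^\infty}^2\nm{\nabla_{\boldsymbol{y}}\chi^\gamma_j}{L^2(Y)}^2+\nm{\nabla_{\boldsymbol{y}}\boldsymbol{a}_j}{L^2(Y)}^2\bigr)$, which is uniformly bounded by~\cref{eq:chiH3} and the hypothesis on $\nabla_{\boldsymbol{y}}A$. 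Absorption then delivers $\mu^2\varepsilon^{2(\gamma-1)}\nm{\nabla^2_{\boldsymbol{y}}\phi_{jk}}{L^2(Y)}^2\le C$, and summing in $j,k$ finishes~\cref{eq:chiH23}.

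The main subtlety — and the only place where the Lipschitz assumption on $A$ is indispensable — is the preparatory $H^2$ regularity of the unperturbed corrector $\boldsymbol{\chi}$; without it the right-hand side of the step-one identity cannot be bounded uniformly, and the order $\varepsilon^{\gamma-1}$ on the right of~\cref{eq:chiH1} is lost. Everything else is book-keeping of energy estimates with the two small parameters $\varepsilon$ and $\iota=\mu\varepsilon^\gamma$, with uniformity in $\boldsymbol{x}\in\Omega$ following because every constant depends only on $\lambda$, $\Lambda$, $\mu$ and $\nm{\nabla_{\boldsymbol{y}}A}{L^\infty(\Omega\times Y)}$.
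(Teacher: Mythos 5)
Your argument is correct, and its first half is essentially the paper's own proof: both test the equation satisfied by $\boldsymbol{\chi}^\gamma-\boldsymbol{\chi}$ against itself (your identity with right-hand side $-\mu^2\varepsilon^{2(\gamma-1)}\Delta^2_{\boldsymbol{y}}\chi_j$ is algebraically the same as the paper's, which writes \cref{eq:chi} as $\mu^2\varepsilon^{2(\gamma-1)}\Delta^2_{\boldsymbol{y}}\chi^\gamma_j=\mathrm{div}_{\boldsymbol{y}}(A\nabla_{\boldsymbol{y}}(\chi^\gamma_j-\chi_j))$ and tests against the difference), and both hinge on the preparatory fact that $\nm{\boldsymbol{\chi}}{L^\infty(\Omega;H^2(Y))}\le C$ under the Lipschitz hypothesis, which you justify in more detail than the paper does. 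Where you diverge is the third-derivative bound: the paper reuses the rewritten biharmonic equation and invokes the $H^3$ elliptic estimate $\nm{\chi^\gamma_j}{H^3(Y)}\le C\varepsilon^{2(1-\gamma)}\nm{\mathrm{div}_{\boldsymbol{y}}(A\nabla_{\boldsymbol{y}}(\chi^\gamma_j-\chi_j))}{H^{-1}(Y)}$, which it then combines with \cref{eq:chiH1}; you instead differentiate \cref{eq:chi} in $y_k$ and run a first-order energy estimate on $\partial_{y_k}\chi^\gamma_j$, never using the difference $\boldsymbol{\chi}^\gamma-\boldsymbol{\chi}$ for this step. Both yield $\varepsilon^{\gamma-1}\nm{\nabla^3_{\boldsymbol{y}}\boldsymbol{\chi}^\gamma}{L^2(Y)}\le C$; your route avoids appealing to $H^3$ regularity of the periodic biharmonic operator, at the cost of having to justify differentiating the equation (a routine difference-quotient argument, legitimate here because $A$ and $\boldsymbol{a}_j$ are Lipschitz in $\boldsymbol{y}$ and the estimate you need is exactly the one that is stable under difference quotients). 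This is a minor but genuine variation; no gap.
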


\begin{proof}
If $\gamma>1$ and $\nm{\nabla_{\boldsymbol{y}}A}{L^\infty(\Omega\times Y)}$ is bounded, then $\nm{\boldsymbol{\chi}}{L^\infty(\Omega;H^2(Y))}$ is also bounded. We rewrite~\cref{eq:chi} as
\begin{equation}\label{eq:lemaChi30}
\begin{cases}
\mu^2\varepsilon^{2(\gamma-1)}\Delta^2_{\boldsymbol{y}}\chi^\gamma_j=\mathrm{div}_{\boldsymbol{y}}(A\nabla_{\boldsymbol{y}}(\chi^\gamma_j-\chi_j)) &\text{in }Y,\\
    \chi^\gamma_j(\boldsymbol{x},\cdot)\text{ is }Y\text{-periodic}&\braket{\chi_j^\gamma}_{Y}=0.
\end{cases}\end{equation}

Multiplying both sides of~\cref{eq:lemaChi30} by $z=\chi_j^\gamma-\chi_j$, integration by parts, we obtain
\begin{align*}
\nm{\nabla_{\boldsymbol{y}}(\chi_j^\gamma-\chi_j)}{L^\infty(\Omega;L^2(Y))}^2&+\varepsilon^{2(\gamma-1)}\nm{\nabla_{\boldsymbol{y}}^2\chi_j^\gamma}{L^\infty(\Omega;L^2(Y))}^2\\
&\le C\varepsilon^{2(\gamma-1)}\nm{\Delta_{\boldsymbol{y}}\chi_j^\gamma}{L^\infty(\Omega;L^2(Y))}\nm{\Delta_{\boldsymbol{y}}\chi_j}{L^\infty(\Omega;L^2(Y))}.
\end{align*}
This gives~\cref{eq:chiH1} and the $H^2$-estimate in~\cref{eq:chiH23}. 

Finally, by the $H^3$-estimate of~\cref{eq:lemaChi30} and~\cref{eq:cvcv},
\begin{align*}
\nm{\chi_j}{L^\infty(\Omega;H^3(Y))}&\le C\varepsilon^{2(1-\gamma)}\nm{\mathrm{div}_{\boldsymbol{y}}(A\nabla_{\boldsymbol{y}}(\chi^\gamma_j-\chi_j))}{L^\infty(\Omega;H^{-1}(Y))}\\
&\le C\varepsilon^{2(1-\gamma)}\nm{\nabla_{\boldsymbol{y}}(\chi^\gamma_j-\chi_j)}{L^\infty(\Omega;L^2(Y))},
\end{align*}
which together with~\cref{eq:chiH1} gives the $H^3$-estimate in~\cref{eq:chiH23}. 
\end{proof}

Next, we define
\begin{equation}\label{eq:barA}
    \bar{A}^\gamma(\boldsymbol{x}){:}=\int_{Y}(A+A\nabla^\top_{\boldsymbol{y}}\boldsymbol{\chi}^\gamma)(\boldsymbol{x},\boldsymbol{y})\mathrm{d}\boldsymbol{y}. 
\end{equation}

If $0<\gamma\le 1$, then $\bar{A}^\gamma=\bar A$. 

If $\gamma>1$ and $\nm{\nabla_{\boldsymbol{y}}A}{L^\infty(\Omega\times Y)}$ is bounded, invoking~\cite[Lemma 3.2]{Niu:2022}, then we get
\begin{equation}\label{eq:homoerr}
    \nm{\bar A^\gamma-\bar A}{L^\infty(\Omega)}\le C\varepsilon^{2(\gamma-1)}.
\end{equation}

Let $\kappa{:}=\lfloor\delta/\varepsilon\rfloor$. By~\cref{eq:chi} and~\cref{eq:barA}, a direct calculation gives
\begin{equation}\label{eq:VlepsAvrg}
    \braket{\nabla \leps{V}}_{I_{\kappa\varepsilon}}=\nabla V_l\quad \text{and\quad}\braket{\leps{A}\nabla \leps{V}}_{I_{\kappa\varepsilon}}=\bar{A}^\gamma(\boldsymbol{x}_l)\nabla V_l.
\end{equation}

Define a cut-off function $\vareps{\rho}\in C_0^\infty(I_{\delta})$ satisfying $0\le\vareps{\rho}\le 1$ and
\begin{equation}\label{eq:rhoeps}
    \vareps{\rho}=1\text{ in }I_{\delta-2\varepsilon},\quad\vareps{\rho}=0\text{ in }\vareps{I_\delta},\quad \abs{\nabla^i\vareps{\rho}}\le C\varepsilon^{-i}\quad\text{for }i=1,2,3,
\end{equation}
where $\vareps{I_\delta}{:}=I_\delta\backslash I_{\delta-\varepsilon}$.

Using the identity~\cref{eq:VlepsAvrg}, we decompose $e$(HMM) into
\begin{equation}\label{eq:eHMM}\begin{aligned}
    (\bar{A}-A_H)(\boldsymbol{x}_l)\nabla V_l&=(\bar A-\bar A^\gamma)(\boldsymbol{x}_l)\nabla V_l+\Lr{\braket{\leps{A}\nabla \leps{V}}_{I_{\kappa\varepsilon}}-\braket{\leps{A}\nabla \leps{V}}_{I_\delta}}\\ &\qquad+\braket{\leps{A}\nabla\leps{\theta}}_{I_\delta}+\Lr{\braket{\leps{A}\nabla \leps{v}}_{I_\delta}-\braket{\vareps{A}\nabla \heps{v}}_{I_\delta}},
    \end{aligned}\end{equation}
where the corrector $\leps{\theta}{:}=\leps{V}-\leps{v}$.

We shall bound the terms in the right-hand side of~\cref{eq:eHMM} in a series of lemmas. The first lemma compares the average of the flux over cells of different sizes.
\begin{lemma}\label{lema:dom}
\[
    \abs{\braket{\leps{A}\nabla \leps{V}}_{I_{\kappa\varepsilon}}-\braket{\leps{A}\nabla \leps{V}}_{I_\delta}}\le C\dfrac\varepsilon\delta\abs{\nabla V_l}.
\]
\end{lemma}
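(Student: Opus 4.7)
The plan is to exploit that the cube $I_{\kappa\varepsilon}$ consists of exactly $\kappa^d$ complete $\varepsilon$-periodicity cells of $\boldsymbol x\mapsto\leps A(\boldsymbol x)\nabla\leps V(\boldsymbol x)$, while $I_\delta$ differs from it only by a thin boundary shell $B:=I_\delta\setminus I_{\kappa\varepsilon}$ of width $\delta-\kappa\varepsilon<\varepsilon$ on each face. Using~\cref{eq:VlepsAvrg} to evaluate the average on $I_{\kappa\varepsilon}$ and splitting $\int_{I_\delta}=\int_{I_{\kappa\varepsilon}}+\int_B$, I would first rewrite
\[
\braket{\leps A\nabla\leps V}_{I_{\kappa\varepsilon}}-\braket{\leps A\nabla\leps V}_{I_\delta}=\frac{1}{\abs{I_\delta}}\int_B\bigl(\bar A^\gamma(\boldsymbol x_l)\nabla V_l-\leps A\nabla\leps V\bigr)\,d\boldsymbol x.
\]
The relative shell volume satisfies $\abs{B}/\abs{I_\delta}=1-(\kappa\varepsilon/\delta)^d\le d\varepsilon/\delta$ because $0\le\delta-\kappa\varepsilon<\varepsilon$, which already handles the constant piece of the integrand and gives $\abs{I_\delta}^{-1}\int_B\abs{\bar A^\gamma\nabla V_l}\,d\boldsymbol x\le C(\varepsilon/\delta)\abs{\nabla V_l}$.

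Next I would estimate the remaining piece $\abs{I_\delta}^{-1}\bigl|\int_B\leps A\nabla\leps V\,d\boldsymbol x\bigr|$ via the Cauchy--Schwarz inequality. Since
\[
\nabla\leps V=\nabla V_l+\nabla^\top_{\boldsymbol y}\boldsymbol\chi^\gamma(\boldsymbol x_l,\boldsymbol x/\varepsilon)\nabla V_l
\]
and $\abs{\leps A}\le\Lambda$, the task reduces to controlling $\nm{\nabla_{\boldsymbol y}\boldsymbol\chi^\gamma(\boldsymbol x_l,\cdot/\varepsilon)}{L^2(B)}$. Were $B$ a union of full periodicity cells, this $L^2$-norm would follow immediately from the bounded mean in~\cref{eq:chiH3}; but the strip $B$ is thinner than one period $\varepsilon$, so a direct periodic-averaging identity is not available.

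The main obstacle is precisely this sub-period $L^2$-bound, and I would handle it by enlarging $B$ to $\tilde B$, the union of the single outer layer of $\varepsilon$-periodicity cubes that meet $B$; a geometric count gives $O((\delta/\varepsilon)^{d-1})$ such cells and hence $\abs{\tilde B}\le C\delta^{d-1}\varepsilon$. Since $\abs{\nabla_{\boldsymbol y}\boldsymbol\chi^\gamma(\boldsymbol x_l,\cdot)}^2$ is non-negative and $Y$-periodic with mean bounded by~\cref{eq:chiH3}, summing over these complete cells yields
\[
\int_B\abs{\nabla_{\boldsymbol y}\boldsymbol\chi^\gamma(\boldsymbol x_l,\boldsymbol x/\varepsilon)}^2\,d\boldsymbol x\le\int_{\tilde B}\abs{\nabla_{\boldsymbol y}\boldsymbol\chi^\gamma(\boldsymbol x_l,\boldsymbol x/\varepsilon)}^2\,d\boldsymbol x\le C\delta^{d-1}\varepsilon.
\]
Combined with $\abs{B}\le C\delta^{d-1}\varepsilon$, Cauchy--Schwarz produces $\bigl|\int_B\leps A\nabla\leps V\,d\boldsymbol x\bigr|\le C\delta^{d-1}\varepsilon\,\abs{\nabla V_l}$, and dividing by $\abs{I_\delta}=\delta^d$ combines with the constant-piece bound to give the desired $C(\varepsilon/\delta)\abs{\nabla V_l}$.
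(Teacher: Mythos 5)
Your proof is correct and follows essentially the same route as the paper: both reduce the difference of averages to an integral over the thin shell $I_\delta\setminus I_{\kappa\varepsilon}$ via the exact identity $\braket{\leps A\nabla\leps V}_{I_{\kappa\varepsilon}}=\bar A^\gamma(\boldsymbol x_l)\nabla V_l$ from~\cref{eq:VlepsAvrg}, and then use the shell's relative volume $\mathcal{O}(\varepsilon/\delta)$ together with an $L^2$ bound on $\nabla\leps V$ over the shell coming from~\cref{eq:chiH3}. Your explicit enlargement of the shell to a union of full $\varepsilon$-periodicity cells is a welcome justification of the sub-period bound $\nm{\nabla\leps V}{L^2(I_\delta^\varepsilon)}\le C\abs{I_\delta^\varepsilon}^{1/2}\abs{\nabla V_l}$, which the paper asserts without comment.
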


\begin{proof}
If $0<\gamma<1$, then 
\[
\nm{\nabla\leps{V}}{L^2(I_\delta^\varepsilon)}=\nm{\nabla V_l}{L^2(I_\delta^\varepsilon)}=\abs{I_\delta^\varepsilon}^{1/2}\abs{\na V_l}.
\]
If $\gamma\ge 1$, then we use~\cref{eq:chiH3} and obtain
\[
\nm{\nabla\leps{V}}{L^2(I_\delta^\varepsilon)}=\nm{I+\nabla_{\boldsymbol{y}}\boldsymbol{\chi}^\gamma(\boldsymbol{x}_l,\cdot/
\varepsilon)}{L^2(I_\delta^\varepsilon)}\abs{\nabla V_l}\le C\abs{I_\delta^\varepsilon}^{1/2}\abs{\na V_l}.
\]
A direct calculation gives
\begin{align*}
    \abs{\braket{\leps{A}\nabla \leps{V}}_{I_{\kappa\varepsilon}}-\braket{\leps{A}\nabla \leps{V}}_{I_\delta}}&\le\Lr{1-\dfrac{\abs{I_{\kappa\varepsilon}}}{\abs{I_\delta}}}\Lr{\abs{\braket{\leps{A}\nabla\leps{V}}_{I_{\kappa\varepsilon}}}+\abs{\braket{\leps{A}\nabla\leps{V}}_{I_\delta\backslash I_{\kappa\varepsilon}}}}\\
    &\le\dfrac{\abs{I_\delta^\varepsilon}}{\abs{I_\delta}}\abs{\bar{A}^\gamma(\boldsymbol{x}_l)\nabla V_l}+\Lambda\dfrac{\sqrt{\abs{I_\delta^\varepsilon}}}{\abs{I_\delta}}\nm{\nabla\leps{V}}{L^2(I_\delta^\varepsilon)}\\
    &\le C\dfrac\varepsilon\delta\abs{\nabla V_l}.
\end{align*}
This finishes the proof.
\end{proof}

We shall frequently used the following perturbation estimates.
\begin{lemma}\label{lema:loc}
If $A\in[C^{0,1}(\Omega;L^\infty(\mathbb{R}^d))]^{d\times d}$, then
\begin{equation}\label{eq:vlepsH2}
    \nm{\leps v-\heps v}{\iota}\le C\delta\nm{\nabla V_l}{L^2(I_\delta)},
\end{equation}
and
\begin{equation}\label{eq:loc}
\abs{\braket{\leps{A}\nabla \leps{v}}_{I_\delta}-\braket{\vareps{A}\nabla \heps{v}}_{I_\delta}}\le C\delta\abs{\nabla V_l}.
\end{equation}
\end{lemma}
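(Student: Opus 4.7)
The plan is to treat both inequalities as perturbation bounds powered by the single observation that $\vareps A$ and $\leps A$ differ only through their slow argument. Since $A\in[C^{0,1}(\Omega;L^\infty(\R^d))]^{d\times d}$, for every $\boldsymbol x\in I_\delta$,
\[
\abs{\vareps A(\boldsymbol x)-\leps A(\boldsymbol x)}=\abs{A(\boldsymbol x,\boldsymbol x/\varepsilon)-A(\boldsymbol x_l,\boldsymbol x/\varepsilon)}\le C\abs{\boldsymbol x-\boldsymbol x_l}\le C\delta,
\]
so $\nm{\vareps A-\leps A}{L^\infty(I_\delta)}\le C\delta$. This $\mathcal O(\delta)$ uniform smallness is the engine driving both estimates.

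For~\cref{eq:vlepsH2} I would exploit that $\leps v$ and $\heps v$ satisfy the same inhomogeneous boundary condition $w-V_l\in V_h$, so $\leps v-\heps v\in V_h$ is admissible as a test function. Subtracting $\vareps a(\heps v,z_h)=0$ from $\leps a(\leps v,z_h)=0$ and moving the $\leps A$ coefficient onto the difference produces
\[
\leps a(\leps v-\heps v,z_h)=\bigl((\vareps A-\leps A)\nabla\heps v,\nabla z_h\bigr)_{I_\delta}\quad\text{for all }z_h\in V_h.
\]
Testing with $z_h=\leps v-\heps v$ and using the coercivity $\leps a(w,w)\ge\tfrac12\min(\lambda,1)\nm{w}{\iota}^2$ already obtained in the well-posedness step, I bound the right-hand side by Cauchy--Schwarz, the Lipschitz bound above, and the a priori estimate~\cref{eq:aepsH2} applied with $\leps A$ in place of $\vareps A$, which gives $\nm{\nabla\heps v}{L^2(I_\delta)}\le C\nm{\nabla V_l}{L^2(I_\delta)}$. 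Dividing by $\nm{\leps v-\heps v}{\iota}$ yields~\cref{eq:vlepsH2}.

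For~\cref{eq:loc} I would split
\[
\braket{\leps A\nabla\leps v}_{I_\delta}-\braket{\vareps A\nabla\heps v}_{I_\delta}=\braket{\leps A\nabla(\leps v-\heps v)}_{I_\delta}+\braket{(\leps A-\vareps A)\nabla\heps v}_{I_\delta}.
\]
For the first term, Cauchy--Schwarz converts the cell average to $\abs{I_\delta}^{-1/2}\nm{\cdot}{L^2(I_\delta)}$, so combining the $L^\infty$ bound on $\leps A$ with~\cref{eq:vlepsH2} gives a bound $C\delta\abs{I_\delta}^{-1/2}\nm{\nabla V_l}{L^2(I_\delta)}=C\delta\abs{\nabla V_l}$, since $\nabla V_l$ is constant on $I_\delta$. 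For the second term, I would factor out the $L^\infty$ bound $C\delta$ on $\vareps A-\leps A$ and apply Cauchy--Schwarz to $\nabla\heps v$, closing again with~\cref{eq:aepsH2}.

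No serious obstacle is expected; the argument is a standard perturbation estimate. The only point requiring care is to work consistently in the $\iota$-weighted norm so that the coercivity constant does not degenerate as $\iota\to 0$, which is precisely why~\cref{eq:aepsH2} was stated in the mixed norm $\nm{\cdot}{\iota}$, and the bookkeeping between the $\delta$ coming from the slow-variable Lipschitz bound and the $\abs{I_\delta}^{-1/2}$ coming from cell averaging closes cleanly thanks to $\nabla V_l$ being constant over the cell.
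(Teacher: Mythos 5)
Your proposal is correct and follows essentially the same route as the paper: the same Lipschitz bound $\abs{\vareps A-\leps A}\le C\delta$, the same Galerkin-orthogonality identity $\leps a(\leps v-\heps v,\leps v-\heps v)=((\vareps A-\leps A)\nabla\heps v,\nabla(\leps v-\heps v))_{I_\delta}$ tested with the admissible difference, and the same two-term splitting of the flux averages closed by Cauchy--Schwarz and~\cref{eq:aepsH2}. (Minor quibble: \cref{eq:aepsH2} applies to $\heps v$ directly with $\vareps A$, no replacement by $\leps A$ needed, but the bound you invoke is the right one.)
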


\begin{proof}
Let $z=\leps v-\heps v\in V_h$ in~\cref{eq:vleps}, we obtain
\[
    \leps{a}(\leps v-\heps v,\leps v-\heps v)=((\vareps A-\leps A)\nabla\heps v,\nabla(\leps v-\heps v))_{I_\delta}.
\]
The estimate~\cref{eq:vlepsH2} follows from the above identity and the fact $\abs{\vareps A-\leps A}\le C\delta$.

Using~\cref{eq:aepsH2} and~\cref{eq:vlepsH2}, we obtain
\begin{align*}
    \abs{\braket{\leps{A}\nabla \leps{v}}_{I_\delta}-\braket{\vareps{A}\nabla \heps{v}}_{I_\delta}}&=\abs{\braket{\leps A\nabla(\leps v-\heps v)}_{I_\delta}+\braket{(\leps A-\vareps A)\nabla\heps v}_{I_\delta}}\\
    &\le\dfrac{C}{\sqrt{\abs{I_\delta}}}(\nm{\leps v-\heps v}{\iota}+\delta\nm{\heps v}{\iota})\\
    &\le C\delta\abs{\nabla V_l}.
\end{align*}
This gives~\cref{eq:loc}.
\end{proof}

To estimate the corrector, we define $\leps{w}$ as the adjoint of $\leps{v}$: Find $\leps{w}-W_l\in V_h$ such that
\begin{equation}\label{eq:wleps}
    \leps{a}(z_h,\leps{w})=0\quad\text{for all }z_h\in V_h,
\end{equation}
and $\leps{W}$ is defined the same with $\leps{V}$ except that at the moment $\boldsymbol{\chi}^\gamma$ is the solution of~\cref{eq:chi} with $A$ replacing by $A^\top$.
\subsection{Estimate of the corrector when $0<\gamma<1$}
In what follows, we estimate the corrector $\braket{\leps{A}\nabla\leps{\theta}}_{I_\delta}$, and we start with a trace inequality.
\begin{lemma}
If $z\in H^1(I_\delta)$ and $\braket{z}_{I_\delta}=0$, then
\begin{equation}\label{eq:trc}
    \nm{z}{L^2(\vareps{I_\delta})}\le C\sqrt{\varepsilon\delta}\nm{\nabla z}{L^2(I_\delta)}.
\end{equation}
\end{lemma}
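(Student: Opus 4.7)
The plan is to reduce the estimate to a dimensionless version on the unit cell $Y$ via scaling, and then to exploit the thinness of the layer by slicing it with hyperplanes parallel to the faces of $Y$.

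First, rescale by setting $\boldsymbol y = (\boldsymbol x - \boldsymbol x_l)/\delta$ so that $I_\delta$ is mapped onto a shifted copy of $Y$, the layer $\vareps{I_\delta}$ is mapped onto a boundary layer $L^\eta \subset Y$ of width $\eta := \varepsilon/\delta$, and the zero-mean constraint $\braket{z}_{I_\delta} = 0$ transfers to $\braket{\tilde z}_Y = 0$ for $\tilde z(\boldsymbol y) := z(\boldsymbol x_l + \delta\boldsymbol y)$. Since $\nm{z}{L^2(\vareps I_\delta)}^2 = \delta^d\nm{\tilde z}{L^2(L^\eta)}^2$ and $\nm{\nabla z}{L^2(I_\delta)}^2 = \delta^{d-2}\nm{\nabla\tilde z}{L^2(Y)}^2$, and using $\eta\delta^2 = \varepsilon\delta$, the target inequality reduces to the dimensionless estimate
\[
\nm{\tilde z}{L^2(L^\eta)}^2 \le C\eta\,\nm{\nabla \tilde z}{L^2(Y)}^2.
\]

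Second, I would decompose $L^\eta$ into the $2d$ slabs of thickness $\eta$ adjacent to the faces of $Y$. For a slab such as $S_i^+ := \{\boldsymbol y \in Y : y_i \in [1/2-\eta, 1/2]\}$, Fubini yields $\nm{\tilde z}{L^2(S_i^+)}^2 = \int_{1/2-\eta}^{1/2}\nm{\tilde z(\cdot,t,\cdot)}{L^2(Y')}^2\,dt$, with $Y' = [-1/2,1/2]^{d-1}$ and $t$ in place of the $i$-th argument. The classical trace inequality for $H^1(Y)$ onto the hyperplane $\{y_i = t\}$, with a constant uniform in $t \in [-1/2,1/2]$, gives $\nm{\tilde z(\cdot,t,\cdot)}{L^2(Y')}^2 \le C\nm{\tilde z}{H^1(Y)}^2$. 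Integrating in $t$ over a strip of width $\eta$ and invoking the Poincar\'e--Wirtinger inequality $\nm{\tilde z}{H^1(Y)} \le C\nm{\nabla\tilde z}{L^2(Y)}$ (available because $\braket{\tilde z}_Y = 0$) produces $\nm{\tilde z}{L^2(S_i^+)}^2 \le C\eta\nm{\nabla\tilde z}{L^2(Y)}^2$. Summing the analogous estimates over the $2d$ faces gives the dimensionless bound, and undoing the scaling yields the claimed inequality.

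The only technical point worth flagging is the uniformity of the trace constant in the slicing height $t$. This is harmless: it follows either from density of $C^\infty(\bar Y)$ in $H^1(Y)$ plus continuity of the trace in $t$, or more elementarily from the identity $\tilde z(\boldsymbol y', t) = \tilde z(\boldsymbol y', s) + \int_s^t \partial_i \tilde z(\boldsymbol y', r)\,dr$ squared, integrated in $\boldsymbol y' \in Y'$, and averaged over $s$ in a strip of fixed positive width. Consequently I do not anticipate any genuine obstacle beyond careful bookkeeping of the scaling factors.
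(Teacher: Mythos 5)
Your argument is correct and follows essentially the same route as the paper: rescale to the unit cell, prove an $\mathcal{O}(\sqrt{\eta})$ trace-type bound on the boundary layer of relative width $\eta=\varepsilon/\delta$, and absorb the full $H^1$-norm into the gradient via the Poincar\'e--Wirtinger inequality using $\braket{z}_{I_\delta}=0$. The only difference is that where the paper cites a thin-layer trace inequality from Oleinik et al., you prove it directly by Fubini slicing over the $2d$ face slabs, which makes the proof self-contained; the bookkeeping of the scaling factors in your write-up is accurate.
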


\begin{proof}
We apply the scaling $\hat{\boldsymbol{x}}{:}=\boldsymbol{x}/\delta$ to $I_\delta$ so that the rescaled cell has diameter $1$. Moreover, denote by $\hat{\varepsilon}{:}=\varepsilon/\delta$ and $\hat{z}(\hat{\boldsymbol{x}}){:}=z(\boldsymbol{x})$, it is clear $\braket{\hat{z}}_{I_1}=0$. Using the trace inequality~\cite[Lemma 1.5]{Oleinik:1992mathematical} and the Poincar\'e inequality, we obtain
\[
    \nm{\hat{z}}{L^2(I_1^{\hat{\varepsilon}})}\le C\sqrt{\hat{\varepsilon}}\nm{\hat{z}}{L^2(I_1)}^{1/2}\nm{\hat{z}}{H^1(I_1)}^{1/2}\le C\sqrt{\hat{\varepsilon}}\nm{\nabla\hat{z}}{L^2(I_1)},
\]
and
\[
    \nm{z}{L^2(\vareps{I_\delta})}\le C\delta^{d/2}\nm{\hat{z}}{L^2(I_1^{\hat{\varepsilon}})}\le C\sqrt{\hat{\varepsilon}}\delta^{d/2}\nm{\nabla\hat{z}}{L^2(I_1)}\le C\sqrt{\hat{\varepsilon}}\delta\nm{\nabla z}{L^2(I_\delta)}.
\]
This gives~\cref{eq:trc}.
\end{proof}

If $0<\gamma<1$, then $\leps V=V_l$ and $\leps\theta=V_l-\leps v\in V_h$.
\begin{lemma}
If $\iota=\mu\varepsilon^\gamma$ with $0<\gamma<1$, then there exists $C$ such that
\begin{equation}\label{eq:thtlepsH20}
   \nm{\leps{\theta}}{\iota}\le C\varepsilon^{1-\gamma}\nm{\nabla V_l}{L^2(I_\delta)}. 
\end{equation}
\end{lemma}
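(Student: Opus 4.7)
Since $\boldsymbol\chi^\gamma = \boldsymbol 0$ in this regime, $\leps V = V_l$ and $\leps\theta = V_l - \leps v \in V_h$; choosing $z_h = \leps\theta$ in~\cref{eq:vleps} and using $\nabla^2 V_l = 0$ gives the energy identity $\leps a(\leps\theta,\leps\theta) = (\leps A\nabla V_l,\nabla \leps\theta)_{I_\delta}$. A naive Cauchy--Schwarz bound on the right-hand side only yields $\nm{\leps\theta}{\iota} \le C\nm{\nabla V_l}{L^2(I_\delta)}$, so the additional $\varepsilon^{1-\gamma}$ smallness must be extracted by exploiting the oscillatory structure of $\leps A$.

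The plan is to replace $\leps A$ by its fluctuation around the constant matrix $\bar A^\gamma(\boldsymbol x_l) = \braket{A(\boldsymbol x_l,\cdot)}_Y$ and express this fluctuation as $\varepsilon^2 \Delta$ of a bounded periodic function. In each of the four boundary-condition regimes we have $\braket{\nabla z_h}_{I_\delta} = \boldsymbol 0$ for every $z_h\in V_h$ (by integration by parts for the essential/natural cases, by construction for the free case, and by periodicity for the periodic case), hence $(\leps A\nabla V_l,\nabla\leps\theta)_{I_\delta} = ((\leps A - \bar A^\gamma(\boldsymbol x_l))\nabla V_l,\nabla\leps\theta)_{I_\delta}$. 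Since $(A(\boldsymbol x_l,\boldsymbol y) - \braket{A(\boldsymbol x_l,\cdot)}_Y)\nabla V_l$ is $Y$-periodic in $\boldsymbol y$ with zero mean, a componentwise periodic Poisson problem yields $\vec\Psi(\boldsymbol y)$ with $\nm{\vec\Psi}{H^2(Y)}\le C\abs{\nabla V_l}$ and $\Delta_{\boldsymbol y}\vec\Psi = (A - \braket{A}_Y)\nabla V_l$. Setting $\vec\Psi^\varepsilon(\boldsymbol x) := \vec\Psi(\boldsymbol x/\varepsilon)$, the chain rule gives $(\leps A - \bar A^\gamma(\boldsymbol x_l))\nabla V_l = \varepsilon^2\Delta_{\boldsymbol x}\vec\Psi^\varepsilon$, which supplies the decisive $\varepsilon^2$.

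To transfer one derivative onto $\leps\theta$ in a way that is blind to the four types of $V_h$, I would split using the cutoff $\rho^\varepsilon$ of~\cref{eq:rhoeps}. The interior part $(\varepsilon^2\rho^\varepsilon\Delta\vec\Psi^\varepsilon,\nabla\leps\theta)$ integrates by parts freely since $\rho^\varepsilon|_{\partial I_\delta} = 0$; the resulting $\varepsilon^2\rho^\varepsilon\nabla\vec\Psi^\varepsilon:\nabla^2\leps\theta$ term is controlled by the periodic scaling $\nm{\nabla\vec\Psi^\varepsilon}{L^2(I_\delta)}\le (C/\varepsilon)\sqrt{\abs{I_\delta}}\abs{\nabla V_l}$, while the $\varepsilon^2\nabla\rho^\varepsilon\cdot\nabla\vec\Psi^\varepsilon\cdot\nabla\leps\theta$ term uses $\abs{\nabla\rho^\varepsilon}\le C/\varepsilon$ on its support $I_\delta^\varepsilon$ together with the trace inequality~\cref{eq:trc} applied to the components of $\nabla\leps\theta$ (which have zero mean for each of the four boundary conditions). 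The residual boundary piece $(\varepsilon^2(1-\rho^\varepsilon)\Delta\vec\Psi^\varepsilon,\nabla\leps\theta)$ is supported in the thin strip $I_\delta^\varepsilon$ and is handled by the same Cauchy--Schwarz/trace combination. All three contributions are bounded by $C\varepsilon\nm{\nabla V_l}{L^2(I_\delta)}\nm{\nabla^2\leps\theta}{L^2(I_\delta)}$.

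Inserting into the energy identity and invoking coercivity $\leps a(\leps\theta,\leps\theta)\ge\iota^2\nm{\nabla^2\leps\theta}{L^2}^2 + \lambda\nm{\nabla\leps\theta}{L^2}^2$, Young's inequality yields $\iota\nm{\nabla^2\leps\theta}{L^2}\le C(\varepsilon/\iota)\nm{\nabla V_l}{L^2(I_\delta)}$ and $\nm{\nabla\leps\theta}{L^2}\le C(\varepsilon/\iota)\nm{\nabla V_l}{L^2(I_\delta)}$, which for $\iota=\mu\varepsilon^\gamma$ combine to give~\cref{eq:thtlepsH20}. The main technical point is BC-uniformity: the cutoff/trace step is precisely what avoids generating boundary terms from the two integrations by parts, which would otherwise not cancel for the natural condition (since $\partial_{\boldsymbol n}\leps\theta\ne 0$) or for the free/periodic conditions (since $\leps\theta\ne 0$ on $\partial I_\delta$).
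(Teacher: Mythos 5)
Your argument is correct and follows essentially the same route as the paper's proof: the energy identity $\leps a(\leps\theta,\leps\theta)=((\leps A-\bar A(\boldsymbol x_l))\nabla V_l,\nabla\leps\theta)_{I_\delta}$ via the zero mean of $\nabla\leps\theta$, the representation of the fluctuation as $\varepsilon^2\Delta$ of a periodic potential (your $\vec\Psi$ is the paper's $\mathcal{A}\nabla V_l$ up to sign), the cutoff $\vareps\rho$ with integration by parts in the interior and the trace inequality~\cref{eq:trc} on the boundary strip, and the final Young/coercivity step yielding the factor $\varepsilon/\iota=\varepsilon^{1-\gamma}/\mu$. No gaps.
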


\begin{proof}
Choosing $z=\leps\theta\in V_h$ in~\cref{eq:vleps} and using $\braket{\nabla\leps\theta}_{I_\delta}=0$ and the fact that $\bar A(\boldsymbol{x}_l)\nabla V_l$ is a constant vector, we obtain
\begin{equation}\label{eq:thtlepsH200}\begin{aligned}
    \leps a(\leps\theta,\leps\theta)&=\leps a(V_l,\leps\theta)=\Lr{(\leps A-\bar A(\boldsymbol{x}_l))\nabla V_l,\nabla\leps\theta}_{I_\delta}\\
    &=\Lr{(\leps A-\bar A(\boldsymbol{x}_l))\nabla V_l,\vareps\rho\nabla\leps\theta}_{I_\delta}\\
    &\quad+\Lr{(\leps A-\bar A(\boldsymbol{x}_l))\nabla V_l,(1-\vareps\rho)\nabla\leps\theta}_{I_\delta}.
\end{aligned}
\end{equation}

Let $\mathcal{A}\in[L^\infty(\Omega;H^1(Y))]^{d\times d}$ be the solution of
\[\left\{\begin{aligned}
    -\Delta_{\boldsymbol{y}}\mathcal{A}&=A-\bar A&&\text{in }Y,\\
    \mathcal{A}(\boldsymbol{x},\cdot)&\text{ is }Y\text{-periodic}&&\braket{\mathcal{A}}_Y=0.
\end{aligned}\right.
\]
Since $\braket{A(\boldsymbol{x},\cdot)-\bar A(\boldsymbol{x})}_Y=0$, there exists a unique solution $\mathcal{A}$ such that 
\[
\nm{\mathcal{A}}{L^\infty(\Omega;(H^2(Y))}\le C\nm{A-\bar A}{L^\infty(\Omega;L^2(Y))}\le C.
\]
Define $\leps{\mathcal{A}}{:}=\mathcal{A}(\boldsymbol{x}_l,\cdot/\varepsilon)$ and it satisfies
\[
    -\varepsilon^2\Delta\leps{\mathcal{A}}=-\Delta_{\boldsymbol{y}}\mathcal{A}(\boldsymbol{x}_l,\cdot/\varepsilon)=\leps A-\bar A(\boldsymbol{x}_l).
\]
Integration by part, we write
\[
 -\varepsilon^2(\Delta\leps{\mathcal{A}}\nabla V_l,\vareps\rho\nabla\leps\theta)_{I_\delta}=\varepsilon^2(\nabla(\leps{\mathcal{A}}\nabla V_l),\nabla\leps\theta\otimes\nabla\vareps\rho+\vareps\rho\nabla^2\leps\theta)_{I_\delta}.
\]
Using the trace inequality~\cref{eq:trc} with $z=\nabla\leps\theta$ and $\braket{z}_{I_\delta}=0$, we bound the first term in~\cref{eq:thtlepsH200} as
\begin{align*}
\abs{-\varepsilon^2(\Delta\leps{\mathcal{A}}\nabla V_l,\vareps\rho\nabla\leps\theta)_{I_\delta}} 
    &\le C\abs{\nabla V_l}\Bigl(\nm{\nabla_{\boldsymbol{y}}\mathcal{A}(\boldsymbol{x}_l,\cdot/\varepsilon)}{L^2(\vareps{I_\delta})}\nm{\nabla\leps\theta}{L^2(\vareps{I_\delta})}\\
    &\phantom{C\abs{\nabla V_l}}\qquad
    +\varepsilon\nm{\nabla_{\boldsymbol{y}}\mathcal{A}(\boldsymbol{x}_l,\cdot/\varepsilon)}{L^2(I_\delta)}\nm{\nabla^2\leps\theta}{L^2(I_\delta)}\Bigr)\\
    &\le C\varepsilon\nm{\nabla V_l}{L^2(I_\delta)}\nm{\nabla^2\leps\theta}{L^2(I_\delta)}.
\end{align*}

Proceeding along the same line, we bound  the second term in~\cref{eq:thtlepsH200} as
\begin{align*}
    \abs{-\varepsilon^2(\Delta\leps{\mathcal{A}}\nabla V_l,(1-\vareps\rho)\nabla\leps\theta)_{I_\delta}}&\le C\abs{\nabla V_l}\nm{\Delta_{\boldsymbol{y}}\mathcal{A}}{L^2(\vareps{I_\delta})}\nm{\nabla\leps\theta}{L^2(\vareps{I_\delta})}\\
    &\le C\varepsilon\nm{\nabla V_l}{L^2(I_\delta)}\nm{\nabla^2\leps\theta}{L^2(I_\delta)}.
\end{align*}
Substituting the above two inequalities into~\cref{eq:thtlepsH200}, and using 
\(
\iota\nm{\nabla^2\leps\theta}{L^2(I_\delta)}\le\nm{\leps\theta}{\iota},
\)
we obtain~\cref{eq:thtlepsH20}.
\end{proof}

We are ready to estimate $\braket{\leps{A}\nabla\leps{\theta}}_{I_\delta}$ with a dual argument.
\begin{lemma}\label{lema:rsn0}
If $\iota=\mu\varepsilon^\gamma$ with $0<\gamma<1$, then
\begin{equation}\label{eq:lemaRsn0}
    \abs{\braket{\leps{A}\nabla\leps{\theta}}_{I_\delta}}\le C\varepsilon^{2(1-\gamma)}\abs{\nabla V_l}.
\end{equation}
\end{lemma}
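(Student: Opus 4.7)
The plan is to run an Aubin--Nitsche style duality argument against the adjoint cell solution $\leps{w}$ introduced in~\cref{eq:wleps}. Fix an arbitrary $\boldsymbol{\xi}\in\mathbb{R}^d$ and take $W_l(\boldsymbol{x})=\boldsymbol{\xi}\cdot\boldsymbol{x}$ as the linear data for the adjoint problem. The key observation is that in the regime $0<\gamma<1$ the adjoint modified corrector (which solves~\cref{eq:chi} with $A$ replaced by $A^\top$) also vanishes, so $\leps{W}=W_l$ and the adjoint corrector $\leps{\vartheta}{:}=W_l-\leps{w}$ lies in $V_h$; similarly $\leps{\theta}=V_l-\leps{v}\in V_h$ by the same reasoning applied to the direct problem.

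Since $\leps{\theta}\in V_h$, testing~\cref{eq:wleps} with $z_h=\leps{\theta}$ yields $\leps{a}(\leps{\theta},\leps{w})=0$. Writing $\leps{w}=W_l-\leps{\vartheta}$ and using $\nabla^2 W_l=\boldsymbol{0}$, this translates into the identity
\[
\abs{I_\delta}\,\braket{\leps{A}\nabla\leps{\theta}}_{I_\delta}\cdot\boldsymbol{\xi}=\leps{a}(\leps{\theta},W_l)=\leps{a}(\leps{\theta},\leps{\vartheta}).
\]
Continuity of $\leps{a}$ in the weighted norm $\nm{\cdot}{\iota}$, which follows from $\abs{A}\le\Lambda$ (itself implied by~\cref{eq:cvcv}), then gives
\[
\abs{\leps{a}(\leps{\theta},\leps{\vartheta})}\le C\nm{\leps{\theta}}{\iota}\nm{\leps{\vartheta}}{\iota}.
\]

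The factor $\nm{\leps{\theta}}{\iota}$ is already controlled by~\cref{eq:thtlepsH20}. For $\nm{\leps{\vartheta}}{\iota}$, I observe that the proof of~\cref{eq:thtlepsH20} uses only the regularity $A\in[C^{0,1}(\Omega;L^\infty(\mathbb{R}^d))]^{d\times d}$ together with the auxiliary potential $\mathcal{A}$ whose Laplacian realises $A-\bar{A}$; replacing $A$ by $A^\top$ throughout (so $\mathcal{A}$ absorbs $A^\top-\bar{A}^\top$) produces the symmetric estimate $\nm{\leps{\vartheta}}{\iota}\le C\varepsilon^{1-\gamma}\nm{\nabla W_l}{L^2(I_\delta)}=C\varepsilon^{1-\gamma}\abs{\boldsymbol{\xi}}\sqrt{\abs{I_\delta}}$. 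Combining these ingredients, dividing by $\abs{I_\delta}$, using $\nm{\nabla V_l}{L^2(I_\delta)}=\abs{\nabla V_l}\sqrt{\abs{I_\delta}}$, and taking the supremum over unit $\boldsymbol{\xi}\in\mathbb{R}^d$ produces~\cref{eq:lemaRsn0}.

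The only delicate point, rather than a real obstacle, is to ensure that both $\leps{\theta}$ and $\leps{\vartheta}$ lie in $V_h$ so that the cross-testing $\leps{a}(\leps{\theta},\leps{w})=0$ is admissible; this is precisely what upgrades the single-corrector bound $\mathcal{O}(\varepsilon^{1-\gamma})$ to the squared resonance-free bound $\mathcal{O}(\varepsilon^{2(1-\gamma)})$, and it depends crucially on the vanishing $\boldsymbol{\chi}^\gamma\equiv\boldsymbol{0}$ (for both $A$ and $A^\top$) that characterises the regime $0<\gamma<1$.
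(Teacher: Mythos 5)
Your argument is correct and is essentially the paper's own proof: both test the adjoint cell problem \cref{eq:wleps} with $z_h=\leps\theta\in V_h$ to replace $W_l$ by $W_l-\leps w$, then apply the corrector bound \cref{eq:thtlepsH20} to both the primal and adjoint correctors (the latter with $A$ replaced by $A^\top$) to obtain the squared rate $\varepsilon^{2(1-\gamma)}$. Your explicit remarks that $\boldsymbol{\chi}^\gamma\equiv\boldsymbol{0}$ for both $A$ and $A^\top$ when $0<\gamma<1$, and that the proof of \cref{eq:thtlepsH20} transfers verbatim to the adjoint, merely make precise what the paper leaves implicit.
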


\begin{proof}
For any constant vector $\nabla W_l$, we get
\begin{align*}
    \abs{I_\delta}\nabla W_l\cdot\braket{\leps{A}\nabla\leps\theta}_{I_\delta}&=\leps a(\leps\theta,W_l)=\leps a(\leps\theta,W_l-\leps w)\le\nm{\leps\theta}{\iota}\nm{W_l-\leps w}{\iota}\\
    &\le C\varepsilon^{2(1-\gamma)}\nm{\nabla V_l}{L^2(I_\delta)}\nm{\nabla W_l}{L^2(I_\delta)},
\end{align*}
where we have used~\cref{eq:wleps} for $\leps w$ with $z=\leps\theta$ in the second step, and~\cref{eq:thtlepsH20} for both $\leps v$ and $\leps w$ in the last step. This  gives~\cref{eq:lemaRsn0}.
\end{proof}

\begin{remark}
It is worth mentioning that the estimate~\cref{eq:lemaRsn0} is independent of $\delta$ and $h$ when $\iota=\mu\varepsilon^\gamma$ with $0<\gamma<1$, which stands in striking contrast to the corresponding estimate for the second-order homogenization problem; cf.~\cite{Ming:2005,Du:2010}.
\end{remark}

\subsection{Estimate of the corrector when $\gamma\ge1$}
It is clear that $\leps{V}$ satisfies
\begin{equation}\label{eq:Vleps}
    \leps{a}(\leps{V},z)=0\quad\text{for any }z\in H_0^2(I_\delta).
\end{equation}

The following estimates for $\leps{V}$ hang on the a priori estimate~\cref{eq:chiH3}.

\begin{lemma}
If $\iota=\mu\varepsilon^\gamma$ with $\gamma\ge 1$, then
\begin{equation}\label{eq:VlepsH2}
    \nm{(1-\vareps\rho)(\leps V-V_l)}{\iota}\le C\sqrt{\varepsilon/\delta}\nm{\nabla V_l}{L^2(I_\delta)},
\end{equation}
and
\begin{equation}\label{eq:VlepsH3}
    \nm{\nabla \leps{V}}{L^2(\vareps{I_\delta})}+\iota\nm{\nabla^2\leps{V}}{L^2(\vareps{I_\delta})}+\iota^2\nm{\nabla^3 \leps{V}}{L^2(\vareps{I_\delta})}\le C\sqrt{\varepsilon/\delta}\nm{\nabla V_l}{L^2(I_\delta)}.
\end{equation}
\end{lemma}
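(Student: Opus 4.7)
The starting point is the explicit formula $\leps V - V_l = \varepsilon\,\boldsymbol{\chi}^\gamma(\boldsymbol{x}_l,\boldsymbol{x}/\varepsilon)\cdot\nabla V_l$, from which the chain rule yields
\[
\nabla^k(\leps V - V_l) = \varepsilon^{1-k}(\nabla_{\boldsymbol{y}}^k \boldsymbol{\chi}^\gamma)(\boldsymbol{x}_l,\boldsymbol{x}/\varepsilon)\cdot\nabla V_l, \qquad k = 1,2,3.
\]
The central auxiliary ingredient is a \emph{periodic-shell} estimate: for any $Y$-periodic $f\in L^2_{\mathrm{loc}}(\mathbb{R}^d)$ and any $\sigma\in\{\varepsilon,2\varepsilon\}$,
\[
\nm{f(\cdot/\varepsilon)}{L^2(I_\delta\setminus I_{\delta-\sigma})}^2 \le C\,\delta^{d-1}\varepsilon\,\nm{f}{L^2(Y)}^2.
\]
I would prove this by tiling the shell of width $\sigma\sim\varepsilon$ with $O((\delta/\varepsilon)^{d-1})$ translates of $\varepsilon Y$, rescaling $\boldsymbol{x}\mapsto\varepsilon\boldsymbol{y}$ on each, and summing. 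Since $\delta^{d-1}\varepsilon\sim(\varepsilon/\delta)\abs{I_\delta}$, this trades a unit-cell bound on $f$ for a shell bound at a price of $\sqrt{\varepsilon/\delta}$.

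For~\cref{eq:VlepsH3}, plug the chain-rule formula into the shell estimate with $\sigma=\varepsilon$ and invoke~\cref{eq:chiH3}, which delivers $\nm{\nabla_{\boldsymbol{y}}^k\boldsymbol{\chi}^\gamma}{L^\infty(\Omega;L^2(Y))} \le C\varepsilon^{-(k-1)(\gamma-1)}$ for $k=1,2,3$. Then
\[
\iota^{k-1}\nm{\nabla^k(\leps V-V_l)}{L^2(\vareps{I_\delta})}
\le C\mu^{k-1}\,\varepsilon^{(k-1)\gamma + (1-k) - (k-1)(\gamma-1)}\,\sqrt{\delta^{d-1}\varepsilon}\,\abs{\nabla V_l}.
\]
The exponent of $\varepsilon$ collapses to $0$ for every $k\ge 1$ and every $\gamma$, and $\sqrt{\delta^{d-1}\varepsilon}\,\abs{\nabla V_l}=\sqrt{\varepsilon/\delta}\,\nm{\nabla V_l}{L^2(I_\delta)}$; combined with the trivial shell bound on the constant vector $\nabla V_l$ itself, this yields~\cref{eq:VlepsH3}.

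To prove~\cref{eq:VlepsH2}, apply the Leibniz rule to $(1-\vareps\rho)(\leps V-V_l)$. Since $1-\vareps\rho$ is supported in $I_\delta\setminus I_{\delta-2\varepsilon}$ and $\abs{\nabla^j\vareps\rho}\le C\varepsilon^{-j}$ by~\cref{eq:rhoeps}, the shell lemma (with $\sigma=2\varepsilon$) again applies to every Leibniz term. The extremal contributions are $\nabla^2\vareps\rho\,(\leps V-V_l)$ and $(1-\vareps\rho)\nabla^2(\leps V-V_l)$; after multiplication by $\iota$ they produce, respectively, prefactors $\varepsilon^{\gamma-1/2}$ and $\varepsilon^{1/2}$ times $\sqrt{\delta^{d-1}}\,\abs{\nabla V_l}$. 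For $\gamma\ge 1$ and $\varepsilon\le 1$ the first is dominated by the second, so both are controlled by $C\sqrt{\varepsilon/\delta}\,\nm{\nabla V_l}{L^2(I_\delta)}$. The mixed term $\nabla\vareps\rho\otimes\nabla(\leps V-V_l)$ and the first-order piece follow by the same bookkeeping.

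The principal obstacle is the tight accounting of $\varepsilon$-powers. Each spatial derivative of $\leps V$ costs a factor $\varepsilon^{-1}$ through the chain rule, each additional $\boldsymbol{y}$-derivative of $\boldsymbol{\chi}^\gamma$ costs $\varepsilon^{-(\gamma-1)}$ by~\cref{eq:chiH3}, and each factor of $\iota$ supplies $\varepsilon^{\gamma}$. The uniform bound survives only because these three sources of $\varepsilon$-powers cancel exactly to order zero — a cancellation that is engineered into the modified cell problem~\cref{eq:chi}. Replacing $\boldsymbol{\chi}^\gamma$ by the classical corrector $\boldsymbol{\chi}$ would destroy the third-order estimate whenever $\gamma>1$, and with it the uniform $\sqrt{\varepsilon/\delta}$ rate in the shell.
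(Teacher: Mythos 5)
Your argument is correct and follows essentially the same route as the paper: Leibniz/chain-rule expansion of the derivatives, the a priori bounds \cref{eq:chiH3} on $\boldsymbol{\chi}^\gamma$, and the $\mathcal{O}(\sqrt{\varepsilon\delta^{d-1}})$ bound for the $L^2$ norm of a rescaled periodic function over the boundary shell (which the paper uses implicitly and you make explicit via the tiling lemma). The exponent bookkeeping, including the cancellation $(k-1)\gamma+(1-k)-(k-1)(\gamma-1)=0$ and the domination of the $\varepsilon^{\gamma-1}$-weighted terms for $\gamma\ge1$, matches the paper's computation.
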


\begin{proof}
A direct calculation gives
\[\nabla\lr{(1-\vareps\rho)(\leps V-V_l)}=\nabla_{\boldsymbol{y}}^\top\boldsymbol{\chi}^\gamma(\boldsymbol{x}_l,\cdot/\varepsilon)\nabla V_l(1-\vareps{\rho})-\varepsilon\boldsymbol{\chi}^\gamma(\boldsymbol{x}_l,\cdot/\varepsilon)\cdot\nabla V_l\nabla\vareps{\rho},\]
and
\begin{align*}
   &\quad \varepsilon\nabla^2\lr{(1-\vareps\rho)(\leps V-V_l)}=\nabla^2_{\boldsymbol{y}}(\boldsymbol{\chi}^\gamma\cdot\nabla V_l)(\boldsymbol{x}_l,\cdot/\varepsilon)(1-\vareps{\rho})\\
&\quad-\varepsilon\nabla_{\boldsymbol{y}}^\top\boldsymbol{\chi}^\gamma(\boldsymbol{x}_l,\cdot/\varepsilon)\nabla V_l\otimes\nabla\vareps{\rho}-\varepsilon\nabla\vareps{\rho}\otimes\nabla_{\boldsymbol{y}}^\top\boldsymbol{\chi}^\gamma(\boldsymbol{x}_l,\cdot/\varepsilon)\nabla V_l\\
&\quad+\varepsilon^2\boldsymbol{\chi}(\boldsymbol{x}_l,\cdot/\varepsilon)\cdot\nabla V_l\nabla^2\vareps{\rho},
\end{align*}
which together with~\cref{eq:chiH3} and~\cref{eq:rhoeps} leads to
\begin{align*}
    \nm{(1-\vareps\rho)(\leps V-V_l)}{\iota}&\le C\abs{\nabla V_l}\nm{(\abs{\boldsymbol{\chi}}+\abs{\nabla_{\boldsymbol{y}}\boldsymbol{\chi}}+\varepsilon^{\gamma-1}\abs{\nabla^2_{\boldsymbol{y}}
    \boldsymbol{\chi}})(\boldsymbol{x}_l,\cdot/\varepsilon)}{L^2(\vareps{I_\delta})}\\
    &\le C\sqrt{\varepsilon\delta^{d-1}}\abs{\nabla V_l}\\
    &\le C\sqrt{\varepsilon/\delta}\nm{\nabla V_l}{L^2(I_\delta)}.
\end{align*}
This gives~\cref{eq:VlepsH2}.

The proof for~\cref{eq:VlepsH3} may be proceeded in the same way. We omit the details.
\end{proof}

The next lemma concerns the error caused by the cell discretization.
\begin{lemma}
If $\iota=\mu\varepsilon^\gamma$ with $\gamma=1$, or $\gamma>1$ and $\nm{\nabla_{\boldsymbol{y}}A}{L^\infty(\Omega\times Y)}$ is bounded, then
\begin{equation}\label{eq:VlepsInter}
    \nm{(I-I_h)[\vareps\rho(\leps V-V_l)]}{\iota}\le C\dfrac{h}{\varepsilon}\nm{\nabla V_l}{L^2(I_\delta)}.
\end{equation}
\end{lemma}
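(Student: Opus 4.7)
The plan is to apply the interpolation estimate~\cref{eq:inter} to convert the bound into one on $\nabla^2$ and $\nabla^3$ of $w{:}=\vareps\rho(\leps V-V_l)$, then to expand these via the Leibniz rule and evaluate each resulting piece by combining the chain-rule factors from $\boldsymbol{\chi}^\gamma(\boldsymbol{x}_l,\boldsymbol{x}/\varepsilon)$, the cut-off bounds~\cref{eq:rhoeps}, and the a priori corrector estimates~\cref{eq:chiH3} and~\cref{eq:chiH23}. Applying~\cref{eq:inter} with $(k,j)=(2,1)$ to the $\nabla$-part of the weighted norm and with $(k,j)=(3,2)$ to the $\iota\nabla^2$-part gives
\[
\nm{(I-I_h)w}{\iota}\le Ch\nm{\nabla^2 w}{L^2(I_\delta)}+C\iota h\nm{\nabla^3 w}{L^2(I_\delta)},
\]
so the claim reduces to proving the two estimates $\nm{\nabla^2 w}{L^2(I_\delta)}\le C\varepsilon^{-1}\nm{\nabla V_l}{L^2(I_\delta)}$ and $\iota\nm{\nabla^3 w}{L^2(I_\delta)}\le C\varepsilon^{-1}\nm{\nabla V_l}{L^2(I_\delta)}$.

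I would then write $\leps V-V_l=\varepsilon\boldsymbol{\chi}^\gamma(\boldsymbol{x}_l,\boldsymbol{x}/\varepsilon)\cdot\nabla V_l$ and expand $\nabla^m w$ for $m=2,3$ by Leibniz. Each $\nabla$ falling on $\boldsymbol{\chi}^\gamma(\boldsymbol{x}_l,\boldsymbol{x}/\varepsilon)$ contributes a factor $\varepsilon^{-1}$ through the chain rule, while each $\nabla$ falling on $\vareps\rho$ contributes $\varepsilon^{-1}$ by~\cref{eq:rhoeps} and localizes that term to the strip $\vareps{I_\delta}$ of measure $O(\varepsilon\delta^{d-1})$. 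For the interior pieces (in which $\vareps\rho$ is undifferentiated) I would use the periodic scaling
\[
\nm{\nabla_{\boldsymbol{y}}^k\boldsymbol{\chi}^\gamma(\boldsymbol{x}_l,\cdot/\varepsilon)}{L^2(I_\delta)}\le C\delta^{d/2}\nm{\nabla_{\boldsymbol{y}}^k\boldsymbol{\chi}^\gamma}{L^\infty(\Omega;L^2(Y))},
\]
together with $\varepsilon^{\gamma-1}\nm{\nabla_{\boldsymbol{y}}^2\boldsymbol{\chi}^\gamma}{L^\infty(\Omega;L^2(Y))}\le C$ and $\varepsilon^{2(\gamma-1)}\nm{\nabla_{\boldsymbol{y}}^3\boldsymbol{\chi}^\gamma}{L^\infty(\Omega;L^2(Y))}\le C$, which come from~\cref{eq:chiH3} when $\gamma=1$ and from~\cref{eq:chiH23} when $\gamma>1$ under the boundedness of $\nabla_{\boldsymbol{y}}A$. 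Combining these factors and multiplying by $\iota=\mu\varepsilon^\gamma$ in the $\nabla^3$-term yields precisely the advertised $\varepsilon^{-1}\nm{\nabla V_l}{L^2(I_\delta)}$. For the boundary-layer pieces, the analogous scaling over the shell $\vareps{I_\delta}$ gives
\[
\nm{\nabla_{\boldsymbol{y}}^k\boldsymbol{\chi}^\gamma(\boldsymbol{x}_l,\cdot/\varepsilon)}{L^2(\vareps{I_\delta})}\le C\sqrt{\varepsilon\delta^{d-1}}\nm{\nabla_{\boldsymbol{y}}^k\boldsymbol{\chi}^\gamma}{L^\infty(\Omega;L^2(Y))},
\]
and since $\sqrt{\varepsilon\delta^{d-1}}\le\delta^{d/2}$ for $\varepsilon\le\delta$, these contributions are absorbed into the interior bound.

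The main obstacle is the bookkeeping of $\varepsilon$-powers in the many Leibniz terms inside $\iota\nabla^3 w$: each piece simultaneously carries a chain-rule power $\varepsilon^{-k}$, a cut-off power $\varepsilon^{-m}$ from $\nabla^m\vareps\rho$, and a $\gamma$-dependent norm of $\boldsymbol{\chi}^\gamma$, and one has to verify that in every regime the worst total power is exactly $\varepsilon^{-\gamma-1}$, so that the prefactor $\iota=\mu\varepsilon^\gamma$ absorbs one factor and leaves the asserted $\varepsilon^{-1}$. The improved $H^2$-bound~\cref{eq:chiH23}, available for $\gamma>1$ precisely under the assumption $\nm{\nabla_{\boldsymbol{y}}A}{L^\infty(\Omega\times Y)}<\infty$, is exactly what prevents an otherwise fatal $\varepsilon^{1-\gamma}$ loss in the interior $\vareps\rho\nabla^2(\leps V-V_l)$ piece and explains why the hypothesis on $\nabla_{\boldsymbol{y}}A$ is imposed in the lemma statement.
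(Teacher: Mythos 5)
Your proposal is correct and follows essentially the same route as the paper: reduce via the interpolation estimate \cref{eq:inter} to bounding $\nm{\nabla^2 w}{L^2}+\iota\nm{\nabla^3 w}{L^2}$ for $w=\vareps\rho(\leps V-V_l)$, then expand by Leibniz using $\varepsilon^j\abs{\nabla^j\vareps\rho}\le C$, the chain-rule powers of $\varepsilon$, the periodic rescaling of $\nm{\nabla_{\boldsymbol{y}}^k\boldsymbol{\chi}^\gamma(\boldsymbol{x}_l,\cdot/\varepsilon)}{L^2(I_\delta)}$, and \cref{eq:chiH3} (for $\gamma=1$) or \cref{eq:chiH23} (for $\gamma>1$). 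Your additional splitting into interior and boundary-strip pieces is a harmless refinement that the paper subsumes by bounding everything over $I_\delta$ at once, and your remark on why \cref{eq:chiH23} is needed to avoid an $\varepsilon^{1-\gamma}$ loss is exactly the point.
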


\begin{proof}
For $k=2,3$, a direct calculation gives 
\begin{align*}
\varepsilon^{k-1}\abs{\nabla^k[\vareps\rho(\leps V-V_l)]}&=\sum_{j=0}^k\varepsilon^j\abs{\nabla^j\vareps\rho}\abs{\nabla_{\boldsymbol{y}}^{k-j}\boldsymbol{\chi}^\gamma(\boldsymbol{x}_l,\cdot/\varepsilon)}\abs{\nabla V_l}\\
&\le C\abs{\nabla V_l}\sum_{j=0}^k\abs{\nabla_{\boldsymbol{y}}^{k-j}\boldsymbol{\chi}^\gamma(\boldsymbol{x}_l,\cdot/\varepsilon)}.
\end{align*}
Using~\cref{eq:chiH3} when $\gamma=1$ and using~\cref{eq:chiH23} when $\gamma>1$ and $\nm{\nabla_{\boldsymbol{y}}A}{L^\infty(\Omega\times Y)}$ is bounded, we obtain
\begin{align*}
\nm{\nabla[\vareps\rho(\leps V-V_l)]}{\iota}&\le C\abs{\nabla V_l}\Bigl(\varepsilon^{-1}\sum_{j=0}^2\nm{\nabla_{\boldsymbol{y}}^j\boldsymbol{\chi}^\gamma(\boldsymbol{x}_l,\cdot/\varepsilon)}{L^2(I_\delta)}\\
&\phantom{C\abs{\nabla V_l}}\qquad+\varepsilon^{\gamma-2}\sum_{j=0}^3\nm{\nabla_{\boldsymbol{y}}^j\boldsymbol{\chi}^\gamma(\boldsymbol{x}_l,\cdot/\varepsilon)}{L^2(I_\delta)}\Bigr)\\
&\le C\varepsilon^{-1}\nm{\nabla V_l}{L^2(I_\delta)}.
\end{align*}

For any $z\in H^3(I_\delta)$, it follows from the interpolation error estimate~\cref{eq:inter} that
\[
\nm{(I-I_h)z}{\iota}\le Ch\nm{\nabla z}{\iota}.
\]
Choosing $z=\vareps\rho(\leps V-V_l)$ and combining the above two inequalities, we obtain~\cref{eq:VlepsInter}.
\end{proof}

\begin{remark}\label{rmk:smooth}
If $\boldsymbol{\chi}\in [L^\infty(\Omega;H^3(Y))]^d$ holds, then the estimate~\cref{eq:chiH23} may be improved to $\nm{\boldsymbol{\chi}^\gamma}{L^\infty(\Omega;H^3(Y))}\le C$, and the interpolation error~\cref{eq:VlepsInter} changes to $\mathcal{O}(h^2/\varepsilon^2)$ when $\gamma\to\infty$. However, this would require extra smoothness assumption on $A$.
\end{remark}

We are ready to prove the estimate of the corrector.
\begin{lemma}
If $\iota=\mu\varepsilon^\gamma$ with $\gamma=1$, or $\gamma>1$ and $\nm{\nabla_{\boldsymbol{y}}A}{L^\infty(\Omega\times Y)}$ is bounded, then
\begin{equation}\label{eq:thtlepsH21}
   \nm{\leps{\theta}}{\iota}\le C(\sqrt{\varepsilon/\delta}+h/\varepsilon)\nm{\nabla V_l}{L^2(I_\delta)}. 
\end{equation}
\end{lemma}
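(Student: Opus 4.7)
The plan is to compare $\leps v$ to a discrete admissible approximation of $\leps V$ and to close the estimate via coercivity of $\leps a$ together with the Galerkin identity for $\leps v$. First set
\[
\tilde V := V_l + I_h[\vareps\rho(\leps V - V_l)].
\]
Since $\vareps\rho$ is supported in $I_{\delta-\varepsilon}$ and (for $h\le c\varepsilon$) so is its interpolant, $\tilde V - V_l \in X_h\cap H_0^2(I_\delta)\subset V_h$ for each of the four boundary-condition choices. Writing $\leps\theta = (\leps V - \tilde V) + \phi$ with $\phi := \tilde V - \leps v \in V_h$, and decomposing
\[
\leps V - \tilde V = (1-\vareps\rho)(\leps V - V_l) + (I - I_h)[\vareps\rho(\leps V - V_l)],
\]
the bounds \eqref{eq:VlepsH2} and \eqref{eq:VlepsInter} give $\nm{\leps V - \tilde V}{\iota} \le C(\sqrt{\varepsilon/\delta} + h/\varepsilon)\nm{\nabla V_l}{L^2(I_\delta)}$, so it suffices to control $\nm{\phi}{\iota}$.

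By coercivity and $\leps a(\leps v, \phi)=0$,
\[
c\nm{\phi}{\iota}^2 \le \leps a(\phi,\phi) = \leps a(\leps V,\phi) + \leps a(\tilde V - \leps V, \phi),
\]
and the second term is absorbed via $|\leps a(\tilde V - \leps V, \phi)|\le C\nm{\leps V - \tilde V}{\iota}\nm{\phi}{\iota}$. For $\leps a(\leps V, \phi)$, the crucial observation is that $\leps V = V_l + \varepsilon\,\boldsymbol{\chi}^\gamma(\boldsymbol x_l,\cdot/\varepsilon)\cdot\nabla V_l$ satisfies $\iota^2\Delta^2\leps V - \mathrm{div}(\leps A\nabla\leps V) = 0$ pointwise in $I_\delta$, a direct computation using \eqref{eq:chi} and the constancy of $\nabla V_l$. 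Integration by parts therefore reduces $\leps a(\leps V,\phi)$ to the boundary integrals
\[
B_1 := \int_{\partial I_\delta}[(\leps A\nabla\leps V - \iota^2\nabla\Delta\leps V)\cdot\boldsymbol n]\phi\,\mathrm{d}\sigma,\quad B_2 := \iota^2\int_{\partial I_\delta}(\nabla^2\leps V\,\boldsymbol n)\cdot\nabla\phi\,\mathrm{d}\sigma.
\]
For the essential BC, $\phi\in H_0^2(I_\delta)$ makes both vanish; for the periodic BC, since $\delta=\kappa\varepsilon$ with $\kappa\in\mathbb N$, $\nabla\leps V$ and $\nabla^2\leps V$ are $\delta Y$-periodic and opposite-face contributions cancel against periodic $\phi$; for the natural BC, $\phi|_{\partial I_\delta}=0$ kills $B_1$, while $B_2$ is controlled by a scaled trace on $\vareps I_\delta$ combined with \eqref{eq:VlepsH3} and the bound $\nm{\nabla\phi}{L^2(\partial I_\delta)}\lesssim\iota^{-1/2}\nm{\phi}{\iota}$, yielding $|B_2|\le C\sqrt{\varepsilon/\delta}\nm{\nabla V_l}{L^2(I_\delta)}\nm{\phi}{\iota}$.

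The main obstacle is the free BC, where $\phi$ is neither zero nor periodic on $\partial I_\delta$ and only the mean conditions $\braket{\phi}_{I_\delta}=0$ and $\braket{\nabla\phi}_{I_\delta}=\boldsymbol 0$ are available. The plan is to exploit the $\varepsilon Y$-periodicity of $\leps A\nabla\leps V$, $\nabla\Delta\leps V$ and $\nabla^2\leps V$ to subtract their $\varepsilon$-averages in $B_1$ and $B_2$ (these averages integrate to zero against the mean-zero $\phi$ and mean-zero $\nabla\phi$), leaving oscillatory boundary data of order $\sqrt{\varepsilon/\delta}$ in weighted trace norms via \eqref{eq:VlepsH3}, which may be paired with $\nm{\phi}{L^2(\partial I_\delta)}\lesssim\sqrt{\delta}\nm{\phi}{\iota}$ (from Poincar\'e plus trace for the mean-zero $\phi$) and $\nm{\nabla\phi}{L^2(\partial I_\delta)}\lesssim\iota^{-1/2}\nm{\phi}{\iota}$ to obtain $|\leps a(\leps V,\phi)|\le C\sqrt{\varepsilon/\delta}\nm{\nabla V_l}{L^2(I_\delta)}\nm{\phi}{\iota}$ in this case as well. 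Inserting into the coercivity inequality produces $\nm{\phi}{\iota}\le C(\sqrt{\varepsilon/\delta}+h/\varepsilon)\nm{\nabla V_l}{L^2(I_\delta)}$, and a triangle inequality on the split yields \eqref{eq:thtlepsH21}.
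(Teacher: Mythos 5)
Your overall architecture---the C\'ea/Strang-type splitting $\leps\theta=(\leps V-\tilde V)+\phi$ with $\phi\in V_h$, coercivity, and the Galerkin orthogonality $\leps a(\leps v,\phi)=0$---is a legitimate alternative to the paper's argument, which instead expands $\leps a(\leps\theta,\leps\theta)$ directly and never leaves the interior of $I_\delta$: every problematic contribution is localized to the strip $\vareps{I_\delta}$ by the cutoff $\vareps\rho$, the oscillating flux is handled through the antisymmetric potential $\mathcal{B}$ of the divergence-free, mean-zero tensor $B$, and the only test functions ever inserted into the discrete problem lie in $V_h\cap H_0^2(I_\delta)$, which is admissible for all four boundary conditions simultaneously. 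Your route pushes the integration by parts all the way to $\partial I_\delta$, and that is where it fails.

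The genuine gap is the free boundary condition (together with a trace-regularity problem affecting $B_1$ in all cases). First, $B_1$ requires the trace of $\iota^2\nabla\Delta\leps V-\leps A\nabla\leps V$ on $\partial I_\delta$; since $\nabla\Delta\leps V\sim\varepsilon^{-2}\nabla^3_{\boldsymbol y}\boldsymbol\chi^\gamma(\boldsymbol x_l,\cdot/\varepsilon)\nabla V_l$ and \cref{eq:chiH3} only gives $\nabla^3_{\boldsymbol y}\boldsymbol\chi^\gamma\in L^2(Y)$, this field has no $L^2$ boundary trace (only an $H^{-1/2}$ normal trace through $H(\mathrm{div})$), so the ``weighted trace norms'' you invoke are not available, and \cref{eq:VlepsH3} is an $L^2(\vareps{I_\delta})$ bound, not a trace bound---converting it costs a factor $\varepsilon^{-1/2}$ that cancels the gain. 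Second, and decisively, subtracting the $Y$-average does not make the residual boundary data small: for a mean-zero $\varepsilon$-periodic $g$ one still has $\nm{g(\cdot/\varepsilon)}{L^2(\partial I_\delta)}\sim\delta^{(d-1)/2}$ times the trace of $g$ on a face of $Y$, exactly the same size as without the mean-zero property, and pairing with $\nm{\phi}{L^2(\partial I_\delta)}\le C\sqrt{\delta}\,\nm{\phi}{\iota}$ yields only $\mathcal{O}(1)\nm{\nabla V_l}{L^2(I_\delta)}\nm{\phi}{\iota}$, not $\mathcal{O}(\sqrt{\varepsilon/\delta})$. The smallness can only come from the oscillation acting against $\phi$, i.e.\ from writing the (face-mean-zero) normal flux as a tangential divergence of an $\varepsilon$-periodic potential and integrating by parts along $\partial I_\delta$; but this produces $\varepsilon\nm{\nabla\phi}{L^2(\partial I_\delta)}\le C\varepsilon\,\iota^{-1/2}\nm{\phi}{\iota}$, and the factor $\iota^{-1/2}=\mu^{-1/2}\varepsilon^{-\gamma/2}$ turns the prefactor into $\varepsilon^{(1-\gamma)/2}\sqrt{\varepsilon/\delta}$, which diverges for every $\gamma>1$. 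So the free-boundary case of your proof does not close. The remedy used in the paper is precisely to avoid boundary integrals: test with $z\in V_h\cap H_0^2(I_\delta)$ and keep the flux correction in the interior, where $\varepsilon\,\leps{\mathcal{B}}(\nabla\vareps\rho\otimes\nabla V_l)$ is supported on a set of measure $C\varepsilon\delta^{d-1}$ and pairs with $\nabla\leps\theta$ in $L^2$ with no $\iota^{-1/2}$ loss. I would recommend replacing your boundary treatment of $B_1$ and $B_2$ by that interior localization; the rest of your splitting can then be retained.
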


\begin{proof}
A direct calculation gives
\begin{align*}
    \leps{a}(\leps{\theta},\leps{\theta})&=\leps{a}(\leps{V},\leps{\theta})-\leps{a}(\leps{v},\leps{\theta})\\
    &=\iota^2(\nabla^2\leps{V},\nabla^2\leps{\theta})_{I_\delta}+(\leps{A}\nabla \leps{V}-\bar{A}^\gamma(\boldsymbol{x}_l)\nabla V_l, \nabla\leps{\theta})_{I_\delta}\\
    &\qquad+(\bar{A}^\gamma(\boldsymbol{x}_l)\nabla V_l,\nabla\leps{\theta})_{I_\delta}-\leps{a}(\leps{v},\leps{V})+\leps{a}(\leps{v},\leps{v}).
\end{align*}

Using the Hill's condition~\cref{eq:zepsAvrg} and the definition~\cref{eq:vleps} for $\leps{v}$, we substitute $\braket{\nabla\leps{\theta}}_{I_\delta}=\braket{\nabla(\leps{V}-V_l)}_{I_\delta}$ into the third term, and employ $\leps{a}(\leps{v},\leps{v})=\leps{a}(\leps{v},V_l)$ in the last term. Then the above identity is reshaped into
\begin{equation}\label{eq:thtlepsH210}
\begin{aligned}
    \leps{a}(\leps{\theta},\leps{\theta})&=\iota^2(\nabla^2\leps{V},\nabla^2\leps{\theta})_{I_\delta}+\iota^2(\nabla\Delta\leps V,\nabla\leps{\theta})_{I_\delta}\\
    &\qquad+(\leps{A}\nabla\leps{V}-\bar{A}^\gamma(\boldsymbol{x}_l)\nabla V_l-\iota^2\nabla\Delta\leps V, \nabla\leps{\theta})_{I_\delta}\\
    &\qquad+(\bar{A}^\gamma(\boldsymbol{x}_l)\nabla V_l,(\nabla\leps{V}-V_l))_{I_\delta}-\leps{a}(\leps{V},\leps{V}-V_l)\\
    &\qquad+\leps{a}(\leps{\theta},\leps{V}-V_l).
\end{aligned}\end{equation}

Integration by parts, we obtain
\[
(\nabla\Delta\leps V,\vareps{\rho}\nabla\leps{\theta})_{I_\delta}
=-(\vareps{\rho}\nabla^2\leps V,\nabla^2\leps{\theta})_{I_\delta}-(\nabla\vareps{\rho},\nabla^2\leps V\nabla\leps{\theta})_{I_\delta},
\]
from which we write the first line in~\cref{eq:thtlepsH210} as
\begin{align*}
    \iota^2(\nabla^2& \leps{V},\nabla^2\leps{\theta})_{I_\delta}+\iota^2(\nabla\Delta\leps V,\nabla\leps{\theta})_{I_\delta}\\    &=\iota^2(\nabla^2\leps{V},\nabla^2\leps{\theta})_{I_\delta}+\iota^2(\nabla\Delta\leps V,\vareps{\rho}\nabla\leps{\theta})_{I_\delta}+\iota^2((1-\vareps{\rho})\nabla\Delta\leps V,\nabla\leps{\theta})_{I_\delta}\\
    &=\iota^2((1-\vareps{\rho})\nabla^2\leps V,\nabla^2\leps{\theta})_{I_\delta}-\iota^2(\nabla\vareps{\rho},\nabla^2\leps V\nabla\leps{\theta})_{I_\delta}+\iota^2((1-\vareps{\rho})\nabla\Delta\leps V,\nabla\leps{\theta})_{I_\delta}\\
    &\le C\iota\nm{\leps{\theta}}{\iota}(\nm{\nabla^2\leps V}{L^2(I_\delta^\varepsilon)}+\iota\nm{\nabla^3\leps{V}}{L^2(I_\delta^\varepsilon)})\\
    &\le\dfrac14\nm{\leps{\theta}}{\iota}^2+C\dfrac\varepsilon\delta\nm{\nabla V_l}{L^2(I_\delta)}^2,
\end{align*}
where we have used the Young's inequality and~\cref{eq:VlepsH3} in the last step. 

For $i,j=1,\cdots,d$, define the tensor
\[
    B_{ij}(\boldsymbol{x},\boldsymbol{y})=(A_{ij}+A_{ik}\partial_{\boldsymbol{y}_k}\chi^\gamma_j-\mu^2\varepsilon^{2(\gamma-1)}\partial_{\boldsymbol{y}_{ikk}}\chi^\gamma_j-\bar{A}^\gamma_{ij})(\boldsymbol{x},\boldsymbol{y}).
\]
By the definitions~\cref{eq:chi} and~\cref{eq:barA},
\[
\partial_{\boldsymbol{y}_i}B_{ij}=0,
\quad\braket{B_{ij}}_{Y}=0.
\]

By~\cite[Theorem 3.1.1]{Shen:2018}, there exists an anti-symmetric tensor 
\[
\mathcal{B}\in[L^\infty(\Omega;H^1(Y))]^{d\times d\times d}
\]
such that
\[
    \mathcal{B}_{kij}=-\mathcal{B}_{ikj},\qquad \partial_{\boldsymbol{y}_k}\mathcal{B}_{kij}=B_{ij},
\]
and by~\cref{eq:chiH3},
\begin{align*}
    \nm{\mathcal{B}}{L^\infty(\Omega;H^1(Y))}&\le C\nm{B}{L^\infty(\Omega;L^2(Y))}\\
    &\le C\Lr{1+\nm{\nabla_{\boldsymbol{y}}\boldsymbol{\chi}^\gamma}{L^\infty(\Omega;L^2(Y))}+\varepsilon^{2(\gamma-1)}\nm{\nabla_{\boldsymbol{y}}^3\boldsymbol{\chi}^\gamma}{L^\infty(\Omega;L^2(Y))}}\\
    &\le C.
\end{align*}
Define $\leps{B}:=B(\boldsymbol{x}_l,\cdot/\varepsilon)$, and $\leps{\mathcal{B}}:=\mathcal{B}(\boldsymbol{x}_l,\cdot/\varepsilon)$, we obtain
\[
    \leps{A}\nabla\leps{V}-\bar{A}^\gamma(\boldsymbol{x}_l)\nabla V_l-\iota^2\nabla\Delta\leps V=\leps{B}\nabla V_l,
\]
and using the anti-symmetry of $\mathcal{B}$, we write
\begin{align*}
\braket{\nabla\leps{\theta}\cdot\vareps{\rho}\leps{B}\nabla V_l}_{I_\delta}&=\varepsilon\braket{\vareps{\rho}\partial_i\leps{\theta}\partial_k\vareps{\mathcal{B}}_{l,kij}\partial_jV_l}_{I_\delta}\\
&=\varepsilon\braket{\partial_i\leps\theta\partial_k(\vareps\rho\vareps{\mathcal{B}}_{l,kij})\partial_jV_l}_{I_\delta}-\varepsilon\braket{\partial_i\leps\theta\vareps{\mathcal{B}}_{l,kij}\partial_k\vareps{\rho}\partial_jV_l}_{I_\delta}\\  &=\varepsilon\braket{\partial_i\leps\theta\vareps{\mathcal{B}}_{l,ikj}\partial_k\vareps{\rho}\partial_jV_l}_{I_\delta}-\varepsilon\braket{\rho^\varepsilon\partial_{ik}\leps\theta\vareps{\mathcal{B}}_{l,kij}\partial_jV_l}_{I_\delta}\\   &=\varepsilon\braket{\nabla\leps\theta\cdot\leps{\mathcal{B}}(\nabla\vareps{\rho}\otimes\nabla V_l)}_{I_\delta}.
\end{align*}
Therefore, the second line of~\cref{eq:thtlepsH210} is bounded by
\begin{align*}
    (\leps{B}\nabla V_l,\nabla\leps\theta)_{I_\delta}&=((1-\vareps\rho)\leps{B}\nabla V_l,\nabla\leps\theta)_{I_\delta}+\varepsilon(\leps{\mathcal{B}}(\nabla\vareps{\rho}\otimes\nabla V_l),\nabla\leps\theta)_{I_\delta}\\
    &\le C\abs{\nabla V_l}\nm{\nabla\leps\theta}{L^2(I_\delta)}(\nm{\leps{B}}{L^2(I_\delta^\varepsilon)}+\nm{\leps{\mathcal{B}}}{L^2(I_\delta^\varepsilon)})\\
    &\le\dfrac14\nm{\leps\theta}{\iota}^2+C\dfrac\varepsilon\delta\nm{\nabla V_l}{L^2(I_\delta)}^2.
\end{align*}

Next, we turn to the third line of~\cref{eq:thtlepsH210}. Since $\leps{V}-V_l$ is periodic, it is straightforward to verify 
\[
    (\leps A\nabla\leps V,\nabla(\leps{V}-V_l))_{I_{\kappa\varepsilon}}+\iota^2(\nabla^2\leps V,\nabla^2\leps V)_{I_{\kappa\varepsilon}}=0
\]
for the second term. Similarly, we use~\cref{eq:VlepsAvrg} for the first term. Using~\cref{eq:VlepsH3}, we get
\begin{align*}
    (\bar{A}^\gamma(\boldsymbol{x}_l)\nabla V_l,&(\nabla\leps{V}-V_l))_{I_\delta}-\leps{a}(\leps{V},\leps{V}-V_l)\\
    &\le C\lr{\nm{\nabla V_l}{L^2(I_\delta^\varepsilon)}^2+\nm{\nabla\leps{V}}{L^2(I_\delta^\varepsilon)}^2+\iota^2\nm{\nabla^2\leps V}{L^2(I_\delta^\varepsilon)}^2}\\
    &\le C\dfrac\varepsilon\delta\nm{\nabla V_l}{L^2(I_\delta)}^2.
\end{align*}

Finally, we estimate the last line in~\cref{eq:thtlepsH210}. Choosing 
\[
z=(\leps{V}-V_l)-(\leps{V}-V_l)(1-\vareps\rho)-(I-I_h)[\vareps\rho(\leps V-V_l)]\in V_h\cap H_0^2(I_\delta)
\]
in~\cref{eq:vleps} and~\cref{eq:Vleps}, we obtain
\begin{align*}
    \leps{a}(\leps{\theta},\leps{V}-V_l)&=\leps{a}(\leps{\theta},(\leps{V}-V_l)(1-\vareps\rho)+(I-I_h)[\vareps\rho(\leps V-V_l)])\\
    &\le C\nm{\leps\theta}{\iota}\lr{\nm{(\leps{V}-V)(1-\vareps\rho)}{\iota}+\nm{(I-I_h)[\vareps\rho(\leps V-V_l)]}{\iota}}\\
    &\le\dfrac14\nm{\leps\theta}{\iota}^2+C\lr{\dfrac\varepsilon\delta+\dfrac{h^2}{\varepsilon^2}}\nm{\nabla V_l}{L^2(I_\delta)}^2,
\end{align*}
where we have used~\cref{eq:VlepsH2} and~\cref{eq:VlepsInter} in the last step.

Substituting all the above inequalities into~\cref{eq:thtlepsH210}, we obtain~\cref{eq:thtlepsH21}.
\end{proof}

\begin{remark}
When $\gamma >1$, the sequence $\vareps A$ $H$-converges to the second-order homogenization limit as $\varepsilon\to 0$. However, as shown in~\cite{Tai:2001,LiMingWang:2021}, boundary layer degenerates the convergence rate of the discretization for general singular perturbations.  This implies that for $\gamma \to \infty$, $\nm{\vareps v - I_h\vareps v}{\iota}$ is only $\mathcal{O}(\sqrt{h/\varepsilon})$, without any smoothness assumption on $\vareps v$; see~\cref{sec:apd}. While the discretization error of the corrector given by~\cref{eq:thtlepsH21} is $\mathcal{O}(h/\varepsilon)$.
\end{remark}

We are ready to estimate $\braket{\leps{A}\nabla\leps{\theta}}_{I_\delta}$.
\begin{lemma}\label{lema:rsn1}
If $\iota=\mu\varepsilon^\gamma$ with $\gamma=1$, or $\gamma>1$ and $\nm{\nabla_{\boldsymbol{y}}A}{L^\infty(\Omega\times Y)}$ is bounded, then
\begin{equation}\label{eq:lemaRsn1}
    \abs{\braket{\leps{A}\nabla\leps{\theta}}_{I_\delta}}\le C\Lr{\dfrac\varepsilon\delta+\dfrac{h^2}{\varepsilon^2}}\abs{\nabla V_l}.
\end{equation}
\end{lemma}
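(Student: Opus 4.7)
The plan is to compute the HMM-matrix error $|I_\delta|(A_H-\bar A^\gamma)(\boldsymbol x_l)\nabla V_l\cdot\nabla W_l$ through the bilinear form $\leps a$ evaluated at both the discrete cell solutions and their first-order approximations, then convert the result into a bound on $\braket{\leps A\nabla\leps\theta}_{I_\delta}$ via~\cref{eq:eHMM}. For an arbitrary constant $\nabla W_l$, introduce the adjoint discrete cell solution $\leps w$ from~\cref{eq:wleps}, its first-order approximation $\leps W := W_l + \varepsilon\boldsymbol{\chi}^{\gamma,\top}(\boldsymbol x_l,\cdot/\varepsilon)\nabla W_l$ (with $\boldsymbol{\chi}^{\gamma,\top}$ the corrector of~\cref{eq:chi} for $A^\top$), and the adjoint corrector $\leps\theta_w := \leps W - \leps w$. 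Since the derivation of~\cref{eq:thtlepsH21} is symmetric under $A\leftrightarrow A^\top$, we have $\nm{\leps\theta_w}{\iota}\le C(\sqrt{\varepsilon/\delta}+h/\varepsilon)\nm{\nabla W_l}{L^2(I_\delta)}$.

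The central identity, obtained by expanding $\leps v = \leps V - \leps\theta$ and $\leps w = \leps W - \leps\theta_w$, is
\[
\leps a(\leps v, \leps w) - \leps a(\leps V, \leps W) = \leps a(\leps\theta, \leps\theta_w) - \leps a(\leps V, \leps\theta_w) - \leps a(\leps\theta, \leps W).
\]
The first term on the right satisfies $|\leps a(\leps\theta,\leps\theta_w)|\le\nm{\leps\theta}{\iota}\nm{\leps\theta_w}{\iota}\le C(\sqrt{\varepsilon/\delta}+h/\varepsilon)^2|I_\delta||\nabla V_l||\nabla W_l|$ by Cauchy--Schwarz and~\cref{eq:thtlepsH21} on both sides, and Young's inequality converts this into $C(\varepsilon/\delta+h^2/\varepsilon^2)|I_\delta||\nabla V_l||\nabla W_l|$. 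For the two cross terms, the key observation is that $\leps V$ solves the bulk cell PDE $\iota^2\Delta^2\leps V-\mathrm{div}(\leps A\nabla\leps V)=0$ pointwise---as follows from applying the chain rule to~\cref{eq:chi}---so integration by parts reduces $\leps a(\leps V,\leps\theta_w)$, and by the adjoint analogue $\leps a(\leps\theta,\leps W)$, to boundary integrals on $\partial I_\delta$, which are controlled by~\cref{eq:VlepsH3} and Lemma~\ref{lema:dom} and contribute $O((\varepsilon/\delta)|I_\delta||\nabla V_l||\nabla W_l|)$.

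On the left of the central identity, Hill's condition (Lemma~\ref{lema:Hill}) gives $\leps a(\leps v,\leps w)=|I_\delta|A_H\nabla V_l\cdot\nabla W_l$, while~\cref{eq:VlepsAvrg} and its adjoint counterpart yield $\leps a_{I_{\kappa\varepsilon}}(\leps V,\leps W)=|I_{\kappa\varepsilon}|\bar A^\gamma\nabla V_l\cdot\nabla W_l$, with the $I_\delta\setminus I_{\kappa\varepsilon}$ part of $\leps a(\leps V,\leps W)$ absorbed into the $O(\varepsilon/\delta)$ residual. Combining and optimizing over unit $\nabla W_l$ bounds $|(A_H-\bar A^\gamma)\nabla V_l|$ by $C(\varepsilon/\delta+h^2/\varepsilon^2)|\nabla V_l|$. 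Finally,~\cref{eq:eHMM} together with Lemma~\ref{lema:dom} translates this into~\cref{eq:lemaRsn1}.

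The main obstacle, absent in Lemma~\ref{lema:rsn0}, is handling the cross terms $\leps a(\leps V,\leps\theta_w)$ and $\leps a(\leps\theta,\leps W)$ uniformly across the four boundary conditions. For essential boundary conditions, $\leps\theta_w\in H_0^2(I_\delta)$ follows from $V_h\subset H_0^2$, and~\cref{eq:Vleps} applies directly. For natural, periodic, and free conditions, the test function lies only in weaker spaces, and integrating by parts produces boundary integrals involving $\partial_{\boldsymbol n}^2\leps V$ and $\partial_{\boldsymbol n}\leps\theta_w$ on $\partial I_\delta$. Bounding these uniformly by $O(\varepsilon/\delta)$ requires localizing the terms into the strip $I_\delta\setminus I_{\kappa\varepsilon}$ where the a priori estimates~\cref{eq:VlepsH2} and~\cref{eq:VlepsH3} are sharp, and combining them with the cutoff/interpolation decomposition from the proof of~\cref{eq:thtlepsH21} is the technical heart of the argument.
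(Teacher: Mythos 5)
Your high-level architecture is essentially the paper's: a duality argument with the adjoint cell solution $\leps w$ and adjoint first-order approximation $\leps W$, Hill's condition to identify $\leps a(\leps v,\leps w)$ with the numerical flux, the identity~\cref{eq:VlepsAvrg} (and its adjoint) to identify the homogenized flux, and the bound~\cref{eq:thtlepsH21} applied to both $\leps\theta$ and $\leps\theta_w$ to absorb the quadratic corrector term via Young's inequality. The paper organizes this as a direct expansion of $\leps a(\leps\theta,W_l)=\leps a(\leps\theta,\leps W)-\leps a(\leps\theta,\leps W-W_l)$ rather than your symmetric quantity $\leps a(\leps v,\leps w)-\leps a(\leps V,\leps W)$, but that difference is cosmetic. (One small imprecision: Hill's condition applied to $\leps v,\leps w$ produces the effective matrix of the \emph{localized} coefficient $\leps A$, not $A_H$ itself; this is harmless since the lemma is stated entirely in localized quantities and the localization error is handled separately by~\cref{lema:loc}.)

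The genuine gap is in your treatment of the cross terms $\leps a(\leps V,\leps\theta_w)$ and $\leps a(\leps\theta,\leps W)$. You correctly observe that $\leps V$ solves the bulk equation pointwise, but the conclusion that integration by parts leaves boundary integrals on $\partial I_\delta$ that are ``controlled by~\cref{eq:VlepsH3} and Lemma~\ref{lema:dom}'' does not hold as stated: \cref{eq:VlepsH3} is a volume estimate over the strip $I_\delta^\varepsilon$, and converting it to a trace bound for $\iota^2\partial_{\boldsymbol n}^2\leps V$ on $\partial I_\delta$ costs a factor $\varepsilon^{-1/2}$, while the trace of $\nabla\leps\theta_w$ is not controlled by $\nm{\leps\theta_w}{\iota}$ without a further loss; the product does not close to $\mathcal{O}(\varepsilon/\delta)$ (compare the appendix, where exactly this kind of boundary-trace argument degrades to $\sqrt{h/\varepsilon}$). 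The paper avoids boundary traces altogether: it multiplies by the cutoff $\vareps\rho$ before integrating by parts so that every ``boundary'' contribution becomes a volume integral over $I_\delta^\varepsilon$ where~\cref{eq:VlepsH3} applies directly; it introduces the antisymmetric flux corrector $\mathcal{B}$ with $\partial_{\boldsymbol y_k}\mathcal{B}_{kij}=B_{ij}$ to gain a factor of $\varepsilon$ on the bulk term $(\leps A\nabla\leps V-\bar A^\gamma\nabla V_l-\iota^2\nabla\Delta\leps V,\nabla\leps\theta)$, which is not small in $L^2(I_\delta)$ and cannot be handled by size alone; and it builds the admissible test function $z=(\leps W-W_l)-(\leps W-W_l)(1-\vareps\rho)-(I-I_h)[\vareps\rho(\leps W-W_l)]\in V_h\cap H_0^2(I_\delta)$ so that both~\cref{eq:vleps} and~\cref{eq:Vleps} apply simultaneously for \emph{all four} boundary conditions, which is where the $\mathcal{O}(h/\varepsilon)$ interpolation error~\cref{eq:VlepsInter} enters the cross terms. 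These three devices are the substance of the proof; your proposal names the place where they are needed but does not supply them, and the substitute mechanism you propose (direct boundary integrals on $\partial I_\delta$) would not produce the claimed $\mathcal{O}(\varepsilon/\delta+h^2/\varepsilon^2)$ rate.
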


\begin{proof}
For any constant vector $\nabla W_l$,  using the fact that the homogenized effective matrix for the dual problem of~\cref{eq:ueps} is $\bar{A}^{\top}$ as shown in~\cite[Lemma 2.2.5]{Shen:2018}, we write
\begin{equation}\label{eq:lemaRsn10}\begin{aligned}
    \abs{I_\delta}\nabla W_l&\cdot\braket{\leps{A}\nabla\leps\theta}_{I_\delta}=\leps{a}(\leps\theta,W_l)=\leps{a}(\leps\theta,\leps{W})-\leps{a}(\leps\theta,\leps{W}-W_l)\\
    &=\iota^2(\nabla^2\leps\theta,\nabla^2\leps W)_{I_\delta}+(\nabla\leps\theta,(\leps{A})^\top\nabla\leps{W}-(\bar{A}^\gamma)^\top(\boldsymbol{x}_l)\nabla W_l)\\
    &\qquad+(\bar{A}^\gamma(\boldsymbol{x}_l)\nabla\leps\theta,\nabla W_l)_{I_\delta}-\leps{a}(\leps\theta,\leps{W}-W_l).
\end{aligned}
\end{equation}
Proceeding along the same line that leads to the first line and the second line in~\cref{eq:thtlepsH210}, we obtain
\begin{align*}
    \iota^2(\nabla^2\leps\theta,\nabla^2\leps W)_{I_\delta}+&(\nabla\leps\theta,(\leps{A})^\top\nabla\leps{W}-(\bar{A}^\gamma)^\top(\boldsymbol{x}_l)\nabla W_l)\\
    &\le C\sqrt{\varepsilon/\delta}\nm{\leps\theta}{\iota}\nm{\nabla W_l}{L^2(I_\delta)}\\
    &\le C\lr{\dfrac\varepsilon\delta+\dfrac{h^2}{\varepsilon^2}}\nm{\nabla V_l}{L^2(I_\delta)}\nm{\nabla W_l}{L^2(I_\delta)},
\end{align*}
where we have used~\cref{eq:thtlepsH21} in the last step.

By~\cref{eq:zepsAvrg} and~\cref{eq:VlepsAvrg}, using~\cref{eq:VlepsH3}, we bound the second to last term in~\cref{eq:lemaRsn10} by
\begin{align*}
(\bar{A}^\gamma(\boldsymbol{x}_l)\nabla\leps\theta,\nabla W_l)_{I_\delta}&=(\bar{A}^\gamma(\boldsymbol{x}_l)\nabla(\leps{V}-V_l),\nabla W_l)_{I_\delta}\\
    &\le C\nm{\nabla(\leps{V}-V_l)}{L^2(I_\delta^\varepsilon)}\nm{\nabla W_l}{L^2(I_\delta^\varepsilon)}\\
    &\le C\dfrac\varepsilon\delta\nm{\nabla V_l}{L^2(I_\delta)}\nm{\nabla W_l}{L^2(I_\delta)}.
\end{align*}

Choosing 
\[
z=\leps{W}-W_l-(\leps{W}-W_l)(1-\vareps\rho)-(I-I_h)[\vareps\rho(\leps W-W_l)]\in V_h\cap H_0^2(I_\delta)
\]
in~\cref{eq:vleps} and~\cref{eq:Vleps}, we estimate the last term in~\cref{eq:lemaRsn10} as
\begin{align*}
    -\leps{a}(\leps\theta,\leps{W}-W_l)&=-\leps{a}(\leps\theta,(\leps{W}-W_l)(1-\vareps\rho)+(I-I_h)[\vareps\rho(\leps W-W_l)])\\
    &\le C\nm{\leps\theta}{\iota}\lr{\nm{(\leps{W}-W_l)(1-\vareps\rho)}{\iota}+\nm{(I-I_h)[\vareps\rho(\leps W-W_l)]}{\iota}}\\
    &\le C\lr{\dfrac\varepsilon\delta+\dfrac{h^2}{\varepsilon^2}}\nm{\nabla V_l}{L^2(I_\delta)}\nm{\nabla W_l}{L^2(I_\delta)},
\end{align*}
where we have used an analogy of~\cref{eq:VlepsH2} and~\cref{eq:VlepsInter} for $\leps{W}$ and~\cref{eq:thtlepsH21} in the last step. 

Substituting the above inequalities into~\cref{eq:lemaRsn10}, we obtain
\begin{align*}
\abs{\nabla W_l\cdot\braket{\leps{A}\nabla\leps{\theta}}_{I_\delta}}&\le\dfrac{C}{\abs{I_\delta}}\Lr{\dfrac{\varepsilon}{\delta}+\dfrac{h^2}{\varepsilon^2}}\nm{\nabla V_l}{L^2(I_\delta)}\nm{\nabla W_l}{L^2(I_\delta)}\\
&\le C\Lr{\dfrac\varepsilon\delta+\dfrac{h^2}{\varepsilon^2}}\abs{\nabla V_l}\abs{\nabla W_l}.
\end{align*}
This leads to~\cref{eq:lemaRsn1}.
\end{proof}

Summing up all the estimates in this part, we are ready to bound $e(\mathrm{HMM})$.
\begin{theorem}\label{thm:eHMM}
If $\iota=\mu\varepsilon^\gamma$ with $\gamma>0$ and $A\in[C^{0,1}(\Omega;L^\infty(\mathbb{R}^d))]^{d\times d}$, then
\begin{equation}\label{eq:thmEHMM}
    e(\mathrm{HMM})\le C\begin{cases}
       \delta+\varepsilon/\delta+\varepsilon^{2(1-\gamma)} & 0<\gamma<1,\\
    \delta+\varepsilon/\delta+h^2/\varepsilon^2 & \gamma=1,\\
\delta+\varepsilon/\delta+\varepsilon^{2(\gamma-1)}+h^2/\varepsilon^2 & \gamma>1,\nm{\nabla_{\boldsymbol{y}}A}{L^\infty(\Omega\times Y)}<\infty.
\end{cases}
\end{equation}
Moreover, if $A^\varepsilon$ is periodic, i.e., $A(\boldsymbol{x},\boldsymbol{y})$ is independent of $\boldsymbol{x}$, and $\delta$ is an integer multiple of $\varepsilon$, then
\begin{equation}\label{eq:thmehmm2}
e(\mathrm{HMM})\le C\begin{cases}
        \varepsilon^{2(1-\gamma)} & 0<\gamma<1,\\
        \varepsilon/\delta+h^2/\varepsilon^2 & \gamma=1,\\    \varepsilon/\delta+\varepsilon^{2(\gamma-1)}+h^2/\varepsilon^2 & \gamma>1,\nm{\nabla_{\boldsymbol{y}}A}{L^\infty(\Omega\times Y)}<\infty.
\end{cases}
\end{equation}
\end{theorem}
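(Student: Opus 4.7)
The plan is to prove \eqref{eq:thmEHMM} and \eqref{eq:thmehmm2} by inserting the previously established bounds into the four-term decomposition \eqref{eq:eHMM}, which writes $(\bar A - A_H)(\boldsymbol{x}_l)\nabla V_l$ as the sum of a homogenization gap $(\bar A - \bar A^\gamma)(\boldsymbol{x}_l)\nabla V_l$, a cell-size mismatch $\braket{\leps A\nabla\leps V}_{I_{\kappa\varepsilon}} - \braket{\leps A\nabla\leps V}_{I_\delta}$, a corrector term $\braket{\leps A\nabla\leps\theta}_{I_\delta}$, and a localization error $\braket{\leps A\nabla\leps v}_{I_\delta} - \braket{\vareps A\nabla\heps v}_{I_\delta}$. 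The strategy is to bound each piece by its dedicated lemma and take the worst dependence in each regime.

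First I would dispose of the three non-corrector terms. The homogenization gap vanishes identically when $0 < \gamma \le 1$, since $\bar A^\gamma = \bar A$ by \eqref{eq:barA}; when $\gamma > 1$ with $\nm{\nabla_{\boldsymbol y}A}{L^\infty(\Omega\times Y)}$ bounded, estimate \eqref{eq:homoerr} contributes $C\varepsilon^{2(\gamma-1)}\abs{\nabla V_l}$. Lemma \ref{lema:dom} bounds the cell-size mismatch by $C(\varepsilon/\delta)\abs{\nabla V_l}$, and Lemma \ref{lema:loc}, which invokes the Lipschitz assumption $A\in[C^{0,1}(\Omega;L^\infty(\mathbb R^d))]^{d\times d}$, bounds the localization error by $C\delta\abs{\nabla V_l}$. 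For the corrector term, Lemma \ref{lema:rsn0} supplies $C\varepsilon^{2(1-\gamma)}\abs{\nabla V_l}$ when $0 < \gamma < 1$, while Lemma \ref{lema:rsn1} gives $C(\varepsilon/\delta + h^2/\varepsilon^2)\abs{\nabla V_l}$ when $\gamma = 1$, or when $\gamma > 1$ with $\nm{\nabla_{\boldsymbol y}A}{L^\infty(\Omega\times Y)}$ bounded. Summing these four contributions in each regime, dividing out $\abs{\nabla V_l}$, and taking a maximum over $\boldsymbol{x}_l$ produces the general estimate \eqref{eq:thmEHMM}.

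For the periodic refinement \eqref{eq:thmehmm2}, two exact cancellations tighten the result. Because $A(\boldsymbol x,\boldsymbol y)$ is independent of $\boldsymbol x$, one has $\leps A \equiv \vareps A$, hence $\leps v = \heps v$, so the localization error vanishes and the $C\delta$ contribution disappears. Because $\delta$ is an integer multiple of $\varepsilon$, $\kappa\varepsilon = \delta$ gives $I_{\kappa\varepsilon} = I_\delta$ so the cell-size mismatch vanishes identically; the remaining $\varepsilon/\delta$ that persists in the $\gamma \ge 1$ cases comes solely from the resonance piece inside Lemma \ref{lema:rsn1}. No substantial obstacle remains, since the analytic work is already encapsulated in Lemmas \ref{lema:rsn0} and \ref{lema:rsn1}; the only point requiring care is to verify that the two cancellations in the periodic case are genuine rather than merely sharper bounds, i.e.\ that the $\boldsymbol x$-dependence drops out of $\leps A$ exactly and that $\kappa\varepsilon = \delta$ literally under the integer-multiple hypothesis.
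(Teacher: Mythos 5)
Your proposal is correct and follows essentially the same route as the paper: substituting the bounds from \eqref{eq:homoerr}, \cref{lema:dom}, \cref{lema:loc}, \cref{lema:rsn0} and \cref{lema:rsn1} into the decomposition \eqref{eq:eHMM}, and then noting that in the periodic, integer-multiple case the localization and cell-size mismatch terms vanish exactly (since $\leps A\equiv\vareps A$ and $I_{\kappa\varepsilon}=I_\delta$), leaving only the resonance contribution from \cref{lema:rsn1}. No gaps.
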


\begin{proof}
Substituting~\cref{eq:homoerr} and~\cref{lema:dom},~\cref{lema:loc},~\cref{lema:rsn0},~\cref{lema:rsn1} into~\cref{eq:eHMM}, we obtain~\cref{eq:thmEHMM}. 

If $\leps A=\vareps A$ and $\delta/\varepsilon$ is a positive integer, then error bounds in~\cref{lema:dom} and~\cref{lema:loc} vanish, the estimate~\cref{eq:thmEHMM} changes to~\cref{eq:thmehmm2}.
\end{proof}

\section{Numerical experiments}~\label{sec:numer}
In this part, we report numerical examples to validate the accuracy of the proposed method. We shall employ the Specht element to solve the cell problems because it is one of the best thin plate triangles with $9$ degrees of freedom that currently available~\cite[citation in p. 345]{Zienk:2005}.~\Cref{thm:eHMM} may be extended to the nonconforming microscale discretization with the aid of the enriching operator~\cite{BrennerSung:2005,LiMingWang:2021}. We refer to~\cite{Liao:2024} for more details.

We define a piecewise space as
\[
    H^m_{\mathcal{T}_h}(I_\delta){:}=\set{z_h\in L^2(I_\delta) | z_h|_K\in H^m(K)\quad\text{for all }K\in\mathcal{T}_h},
\]
which is equipped with the broken norm
\[
    \nm{z_h}{H^m_{\mathcal{T}_h}(I_\delta)}^2{:}=\nm{z_h}{L^2(I_\delta)}^2+\sum_{i=1}^m\nm{\nabla_h^iz_h}{L^2(I_\delta)}^2,
\]
where
\[
    \nm{\nabla_h^iz_h}{L^2(I_\delta)}^2=\sum_{K\in\mathcal{T}_h}\nm{\nabla^iz_h}{L^2(K)}^2.
\]
Let $\mathcal{V}_h^i$ be set of the interior vertices of $\mathcal{T}_h$, and $\mathcal{V}_h^b$ be the set of the vertices on the boundary, and $\mathcal{E}_h^i$ be the interior edges of $\mathcal{T}_h$. The Specht element introduced in~\cite{Specht:1988} is an $H^1$-conforming but $H^2$-nonconforming element, which is defined as
\begin{align*}
    X_h{:}=\Bigl\{z_h\in H^1(I_\delta)\ |\ & z_h|_K\in P_K\text{ for all }K\in\mathcal{T}_h,\\
    &\nabla z_h(\boldsymbol{a})\text{ are continuous for all }\boldsymbol{a}\in\mathcal{V}_h^i\Bigr\},
\end{align*}
where $P_K\supset\mathbb{P}_2$ is the local space on $K$. According to~\cite[\S~3.8]{Shi:1987},
\[
\braket{\jump{\partial_{\boldsymbol{n}}z_h}}_e=0\qquad\text{for all}\quad z_h\in X_h,\quad e\in\mathcal{E}_h^i.
\]
The interpolation operator~\cref{eq:inter} is defined in~\cite[Theorem 3]{LiMingWang:2021}.

The bilinear form $\vareps a$ in~\cref{eq:aeps} is replaced by $\heps{a}:H^2_{\mathcal{T}_h}(I_\delta)\times H^2_{\mathcal{T}_h}(I_\delta)\to\mathbb{R}$ given by
\[
    \heps{a}(v_h,z_h){:}=(\vareps{A}\nabla v_h,\nabla z_h)_{I_\delta}+\iota^2(\nabla^2_hv_h,\nabla^2_hz_h)_{I_\delta}\quad \text{for any }v_h,z_h\in H_{\mathcal{T}_h}^2(I_\delta),
\]
and the four type boundary conditions are defined by
\begin{enumerate}
\item Essential boundary condition: 
\[
V_h=\set{z_h\in X_h\cap H_0^1(I_\delta)|\nabla z_h(\boldsymbol{a})=\boldsymbol{0}\text{ for all }\boldsymbol{a}\in\mathcal{V}_h^b};
\]
\item Natural boundary condition: $V_h=X_h\cap H_0^1(I_\delta)$;
\item Free boundary condition: $V_h=\set{z_h\in X_h\cap L_0^2(I_\delta)|\braket{\nabla z_h}_{I_\delta}=\boldsymbol{0}}$;
\item Periodic boundary condition: $V_h=X_h\cap L_0^2(I_\delta)\cap H_{\mathrm{per}}^1(I_\delta)$.
\end{enumerate}
The weighted norm over $V_h$ is defined by
\[
    \nm{\cdot}{\iota,h}{:}=\nm{\nabla\cdot}{L^2(I_\delta)}+\iota\nm{\nabla_h^2\cdot}{L^2(I_\delta)}.
\]
Hence $\heps{a}$ is bounded and covercive over $V_h$ with respect to $\nm{\cdot}{\iota,h}$. By Lax-Milgram theorem, the above cell problem has a unique solution $\heps{v}$.

We select $V_l=x_1$ and $x_2$ in~\cref{eq:aeps}, and calculate the solution $\vareps v_1$ and $\vareps v_2$, respectively. The effect matrix is computed by~\cref{eq:AH}. Motivated by~\cite[\S~2.4]{Yue:2007}, we use the weighted averaging methods to improve the accuracy:
\[
    A_H(\boldsymbol{x}_l){:}=\begin{pmatrix}
        \braket{\omega\vareps A\nabla\vareps v_1}_{I_\delta} &\braket{\omega\vareps A\nabla\vareps v_2}_{I_\delta}
    \end{pmatrix}\begin{pmatrix}
        \braket{\omega\nabla\vareps v_1}_{I_\delta} & \braket{\omega\nabla\vareps v_2}_{I_\delta}
    \end{pmatrix}^{-1},
\]
where the weighted function
\begin{equation}\label{eq:omg}
    \omega(\boldsymbol{x})=\lr{1+\cos(2\pi(x_1-x_{l,1})/\delta)}\lr{1+\cos(2\pi(x_2-x_{l,2})/\delta)}.
\end{equation}
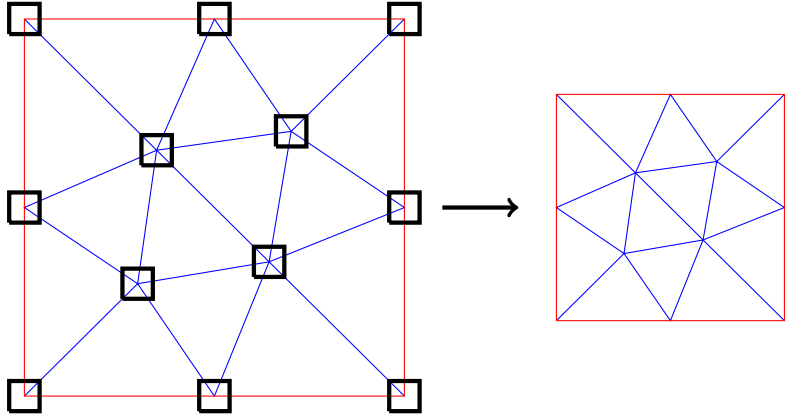
\begin{figure}[htbp]\centering\begin{tikzpicture}
    \draw[blue](0.00,0.00)--(1.49,1.49)--(2.50,0.00)--(3.22,1.78)--(5.00,2.50)--(3.51,3.51)--(2.50,5.00)--(1.74,3.26)--(0.00,2.50)--(1.49,1.49)--(3.22,1.78)--(3.51,3.51)--(1.74,3.26)--(1.49,1.49);
    \draw[blue](5.00,0.00)--(3.22,1.78)--(1.74,3.26)--(0.00,5.00);
    \draw[blue](3.51,3.51)--(5,5);
    \draw[red](0,0)--(5,0)--(5,5)--(0,5)--(0,0);
    \draw[ultra thick](-0.20,-0.20)--(-0.20,0.20)--(0.20,0.20)--(0.20,-0.20)--(-0.20,-0.20);
    \draw[ultra thick](4.80,-0.20)--(4.80,0.20)--(5.20,0.20)--(5.20,-0.20)--(4.80,-0.20);
    \draw[ultra thick](4.80,4.80)--(4.80,5.20)--(5.20,5.20)--(5.20,4.80)--(4.80,4.80);
    \draw[ultra thick](-0.20,4.80)--(-0.20,5.20)--(0.20,5.20)--(0.20,4.80)--(-0.20,4.80);
    \draw[ultra thick](2.30,4.80)--(2.30,5.20)--(2.70,5.20)--(2.70,4.80)--(2.30,4.80);
    \draw[ultra thick](4.80,2.30)--(4.80,2.70)--(5.20,2.70)--(5.20,2.30)--(4.80,2.30);
    \draw[ultra thick](2.30,-0.20)--(2.30,0.20)--(2.70,0.20)--(2.70,-0.20)--(2.30,-0.20);
    \draw[ultra thick](-0.20,2.30)--(-0.20,2.70)--(0.20,2.70)--(0.20,2.30)--(-0.20,2.30);
    \draw[ultra thick](3.02,1.58)--(3.02,1.98)--(3.42,1.98)--(3.42,1.58)--(3.02,1.58);
    \draw[ultra thick](3.31,3.31)--(3.31,3.71)--(3.71,3.71)--(3.71,3.31)--(3.31,3.31);
    \draw[ultra thick](1.54,3.06)--(1.54,3.46)--(1.94,3.46)--(1.94,3.06)--(1.54,3.06);
    \draw[ultra thick](1.29,1.29)--(1.29,1.69)--(1.69,1.69)--(1.69,1.29)--(1.29,1.29);
    \draw[blue](7.00,1.00)--(7.89,1.89)--(8.50,1.00)--(8.93,2.07)--(10.00,2.50)--(9.11,3.11)--(8.50,4.00)--(8.04,2.96)--(7.00,2.50)--(7.89,1.89)--(8.93,2.07)--(9.11,3.11)--(8.04,2.96)--(7.89,1.89);
    \draw[blue](10.00,1.00)--(8.93,2.07)--(8.04,2.96)--(7.00,4.00);
    \draw[blue](9.11,3.11)--(10,4);
    \draw[red](7,1)--(10,1)--(10,4)--(7,4)--(7,1);
    \draw[ultra thick,->](5.5,2.5)--(6.5,2.5);
\end{tikzpicture}\caption{Left: macroscopic meshes $\mathcal{T}_H$ on domain $\Omega$; Right: microscopic meshes $\mathcal{T}_h$ on the cell $I_\delta$.}\label{fig:meshes}\end{figure}

Let $\Omega=(0,1)^2$ and $\mu=1$, we divide $\Omega$ into non-uniform meshes, solve the cell problems on all vertices $\boldsymbol{a}$ in parallel, and $\omega_l=1/3$; See~\cref{fig:meshes}.
\subsection{Accuracy of the effective matrix}
In the first example, we test a layered material, i.e., the coefficient $A^\varepsilon$ depends only on $x_1$,
\[
    A^\varepsilon(\boldsymbol{x})=\dfrac1{2\pi - \cos(2\pi x_1/\varepsilon)}\begin{pmatrix}
        50 + 4\pi^2\cos(2\pi x_1/\varepsilon) & (4\pi^2 + 25/\pi) \sin(2\pi x_1/\varepsilon)\\
        0 & 4\pi^2-1 + \sin(2\pi x_1/\varepsilon)
    \end{pmatrix}.
\]
A direct computation gives the analytical expression of the effective matrix
\[
    \bar{A}=\begin{pmatrix}
        \bar A_{11} & 0\\
        0 & \sqrt{4\pi^2-1}
    \end{pmatrix}
\]
with
\[\bar A_{11}=\begin{cases}
    (50+8\pi^3)/\sqrt{4\pi^2-1}-4\pi^2 & 0<\gamma<1,\\
    25/\pi & \gamma=1,\\
    4\pi^2/[(25+4\pi^3)/\sqrt{625-4\pi^4}-1] & \gamma>1,
\end{cases}
\]

We are interested in whether the resonance error and the discretization error are optimal. Hence, we solely contemplate on the scenario that $\vareps A$ is periodic, and do not pay attention to the error caused by the local periodicity.

We report the relative error
\[
    e_F(\mathrm{HMM}){:}=\max_{\boldsymbol{a}\text{ the vertices of }\mathcal{T}_H}\dfrac{\nm{(A_H-\bar{A})(\boldsymbol{a})}{F}}{\nm{\bar A(\boldsymbol{a})}{F}},
\]
where $\nm{\cdot}{F}$ represents the Frobenius norm of the matrix. Note that $e_F(\mathrm{HMM})$ is equivalent to $e(\mathrm{HMM})$, while it is simpler for implementation.

\begin{table}[htbp]\footnotesize\centering\caption{$e_F$(HMM) w.r.t. the cell size $\delta$ for the 1st example, with and without weighted averaging, $\varepsilon=2^{-5}$ and $h=2^{-9}$.}\label{tab:dlt}\begin{tabular}{l|ccccc}
    \hline $\gamma\backslash\delta$ & $2^{-5}$ & $2^{-4}$ & $2^{-3}$ & $2^{-2}$ & $2^{-1}$\\
    \hline
    &\multicolumn{4}{l}{Essential boundaries without weighted average}\\
    0.25 & 9.8755e-05 & 2.3796e-04 & 3.1778e-04 & 3.5597e-04 & 3.7392e-04\\
    rate &  & -1.27 & -0.42 & -0.16 & -0.07\\
    1.00 & 4.7228e-02 & 2.7798e-02 & 1.4240e-02 & 7.1454e-03 & 3.6163e-03\\
    rate &  & 0.76 & 0.97 & 0.99 & 0.98\\
    4.00 & 2.3679e-01 & 1.1183e-01 & 5.0872e-02 & 2.3541e-02 & 1.1154e-02\\
    rate &  & 1.08 & 1.14 & 1.11 & 1.08\\ \hline
    &\multicolumn{4}{l}{Natural boundaries without weighted average}\\
    0.25 & 3.6687e-01 & 4.1168e-04 & 4.9067e-04 & 5.1721e-04 & 5.0822e-04\\
    rate &  & 9.80 & -0.25 & -0.08 & 0.03\\
    1.00 & 2.7187e-02 & 1.3792e-02 & 6.9795e-03 & 3.7269e-03 & 2.2220e-03\\
    rate &  & 0.98 & 0.98 & 0.91 & 0.75\\
    4.00 & 2.2666e-01 & 1.0711e-01 & 4.8668e-02 & 2.2475e-02 & 1.0629e-02\\
    rate &  & 1.08 & 1.14 & 1.11 & 1.08\\ \hline
    &\multicolumn{4}{l}{Free boundaries without weighted average}\\
    0.25 & 9.7224e-04 & 9.6194e-04 & 9.3580e-04 & 8.5284e-04 & 6.9166e-04\\
    rate &  & 0.02 & 0.04 & 0.13 & 0.30\\
    1.00 & 5.3796e-02 & 3.0771e-02 & 1.5599e-02 & 7.7678e-03 & 3.8279e-03\\
    rate &  & 0.81 & 0.98 & 1.01 & 1.02\\
    4.00 & 9.2755e-03 & 3.1555e-03 & 1.4309e-03 & 6.5085e-04 & 3.0399e-04\\
    rate &  & 1.56 & 1.14 & 1.14 & 1.10\\ \hline
    &\multicolumn{4}{l}{Essential boundaries with weighted average~\cref{eq:omg}}\\
    0.25 & 3.6681e-01 & 3.7678e-04 & 3.8763e-04 & 3.8998e-04 & 3.9050e-04\\
    rate &  & 9.93 & -0.04 & -0.01 & -0.00\\
    1.00 & 4.2970e-01 & 4.1742e-03 & 8.3335e-04 & 2.4794e-04 & 1.7062e-04\\
    rate &  & 6.69 & 2.32 & 1.75 & 0.54\\
    4.00 & 7.7975e-01 & 1.2809e-02 & 1.8316e-03 & 2.9735e-04 & 6.6755e-05\\
    rate &  & 5.93 & 2.81 & 2.62 & 2.16\\ \hline
\end{tabular}\end{table}

Without employing the weighted average, it follows from~\cref{tab:dlt} that $e_F$(HMM) are $\mathcal{O}(1)$ when $\gamma<1$ and $\mathcal{O}(\delta^{-1})$ when $\gamma\ge 1$, which are aligning perfectly with~\cref{thm:eHMM}. 

We observe that the cell problems with free boundary conditions perform slightly better when $\gamma>1$, which seems caused by the boundary layer effects. Moreover, the weighted average may lead to a remarkable reduction in errors for large $\gamma$.

\begin{table}[htbp]\footnotesize\centering\caption{$e_F$(HMM) w.r.t. the multiscale $\varepsilon$ for the 1st example, essential boundary conditions without weighted average, $\delta=2^{-1}$ and $h=2^{-9}$.}\label{tab:epsEssential}
\begin{tabular}{l|ccccc}
    \hline $\gamma\backslash\varepsilon$ & $2^{-1}$ & $2^{-2}$ & $2^{-3}$ & $2^{-4}$ & $2^{-5}$\\ \hline
    0.25 & 6.6896e-03 & 5.5478e-03 & 2.5850e-03 & 1.0149e-03 & 3.7392e-04\\
    rate &  & 0.27 & 1.10 & 1.35 & 1.44\\
    0.50 & 9.2802e-03 & 1.0819e-02 & 7.1842e-03 & 4.0175e-03 & 2.1022e-03\\
    rate &  & -0.22 & 0.59 & 0.84 & 0.93\\
    0.75 & 1.2781e-02 & 2.0616e-02 & 1.9371e-02 & 1.5433e-02 & 1.1503e-02\\
    rate &  & -0.69 & 0.09 & 0.33 & 0.42\\
    1.00 & 4.5901e-02 & 2.7590e-02 & 1.4062e-02 & 7.0174e-03 & 3.6163e-03\\
    rate &  & 0.73 & 0.97 & 1.00 & 0.96\\
    1.50 & 4.9019e-01 & 4.0570e-01 & 2.9837e-01 & 2.0229e-01 & 1.3040e-01\\
    rate &  & 0.27 & 0.44 & 0.56 & 0.63\\
    2.00 & 4.6424e-01 & 2.8744e-01 & 1.3023e-01 & 5.2731e-02 & 2.0802e-02\\
    rate &  & 0.69 & 1.14 & 1.30 & 1.34\\
    4.00 & 3.1211e-01 & 1.0780e-01 & 4.8341e-02 & 2.3041e-02 & 1.1154e-02\\
    rate &  & 1.53 & 1.16 & 1.07 & 1.05\\ \hline
\end{tabular}\end{table}
\begin{table}[htbp]\footnotesize\centering\caption{$e_F$(HMM) w.r.t. the multiscale $\varepsilon$ for the 1st example, natural boundary conditions without weighted average, $\delta=2^{-1}$ and $h=2^{-9}$.}\label{tab:epsNatural}
\begin{tabular}{l|ccccc}
    \hline $\gamma\backslash\varepsilon$ & $2^{-1}$ & $2^{-2}$ & $2^{-3}$ & $2^{-4}$ & $2^{-5}$\\ \hline
    0.25 & 1.4923e-02 & 9.5660e-03 & 4.2174e-03 & 1.5986e-03 & 5.0822e-04\\
    rate &  & 0.64 & 1.18 & 1.40 & 1.65\\
    0.50 & 2.0190e-02 & 1.7877e-02 & 1.0954e-02 & 5.7296e-03 & 2.5736e-03\\
    rate &  & 0.18 & 0.71 & 0.93 & 1.15\\
    0.75 & 2.6924e-02 & 3.1982e-02 & 2.6782e-02 & 1.9528e-02 & 1.2810e-02\\
    rate &  & -0.25 & 0.26 & 0.46 & 0.61\\
    1.00 & 2.7033e-02 & 1.3621e-02 & 6.8198e-03 & 3.6600e-03 & 2.2220e-03\\
    rate &  & 0.99 & 1.00 & 0.90 & 0.72\\
    1.50 & 4.4854e-01 & 3.6039e-01 & 2.6744e-01 & 1.8583e-01 & 1.2302e-01\\
    rate &  & 0.32 & 0.43 & 0.53 & 0.60\\
    2.00 & 4.1122e-01 & 2.4308e-01 & 1.1438e-01 & 4.7150e-02 & 1.8190e-02\\
    rate &  & 0.76 & 1.09 & 1.28 & 1.37\\
    4.00 & 2.7173e-01 & 1.0308e-01 & 4.7668e-02 & 2.2546e-02 & 1.0629e-02\\
    rate &  & 1.40 & 1.11 & 1.08 & 1.08\\ \hline
\end{tabular}\end{table}
\begin{table}[htbp]\footnotesize\centering\caption{$e_F$(HMM) w.r.t. the multiscale $\varepsilon$ for the 1st example, free boundary conditions without weighted average, $\delta=2^{-1}$ and $h=2^{-9}$.}\label{tab:epsFree}
\begin{tabular}{l|ccccc}
    \hline $\gamma\backslash\varepsilon$ & $2^{-1}$ & $2^{-2}$ & $2^{-3}$ & $2^{-4}$ & $2^{-5}$\\ \hline
    0.25 & 5.6414e-02 & 2.0473e-02 & 7.2856e-03 & 2.4598e-03 & 6.9166e-04\\
    rate &  & 1.46 & 1.49 & 1.57 & 1.83\\
    0.50 & 7.2549e-02 & 3.4936e-02 & 1.6430e-02 & 7.3286e-03 & 2.9035e-03\\
    rate &  & 1.05 & 1.09 & 1.16 & 1.34\\
    0.75 & 9.1062e-02 & 5.6303e-02 & 3.5530e-02 & 2.2386e-02 & 1.3497e-02\\
    rate &  & 0.69 & 0.66 & 0.67 & 0.73\\
    1.00 & 5.7404e-02 & 3.1341e-02 & 1.7122e-02 & 9.1322e-03 & 3.8279e-03\\
    rate &  & 0.87 & 0.87 & 0.91 & 1.25\\
    1.50 & 3.9104e-01 & 3.1895e-01 & 2.4111e-01 & 1.7063e-01 & 1.1453e-01\\
    rate &  & 0.29 & 0.40 & 0.50 & 0.58\\
    2.00 & 3.1896e-01 & 1.7056e-01 & 7.2766e-02 & 2.5657e-02 & 7.7513e-03\\
    rate &  & 0.90 & 1.23 & 1.50 & 1.73\\
    4.00 & 7.3885e-02 & 6.9724e-03 & 2.5177e-03 & 9.4015e-04 & 3.0399e-04\\
    rate &  & 3.41 & 1.47 & 1.42 & 1.63\\ \hline
\end{tabular}\end{table}
\begin{table}[htbp]\footnotesize\centering\caption{$e_F$(HMM) w.r.t. the multiscale $\varepsilon$ for the 1st example, periodic boundary conditions without weighted average, $\delta=\varepsilon$ and $h=2^{-8}\varepsilon$.}\label{tab:epsPeriodic}
\begin{tabular}{l|ccccc}
    \hline $\gamma\backslash\varepsilon$ & $2^{-5}$ & $2^{-6}$ & $2^{-7}$ & $2^{-8}$ & $2^{-9}$\\ \hline
    0.25 & 3.9187e-04 & 1.3874e-04 & 4.9131e-05 & 1.8237e-05 & 6.4369e-06\\
    rate &  & 1.50 & 1.50 & 1.43 & 1.50\\
    0.50 & 2.2037e-03 & 1.1058e-03 & 5.5410e-04 & 2.7795e-04 & 1.3983e-04\\
    rate &  & 0.99 & 1.00 & 1.00 & 0.99\\
    0.75 & 1.2068e-02 & 8.6315e-03 & 6.1534e-03 & 4.3773e-03 & 3.1074e-03\\
    rate &  & 0.48 & 0.49 & 0.49 & 0.49\\
    1.00 & 7.9881e-07 & 6.5756e-07 & 6.7266e-07 & 1.1896e-06 & 1.6055e-06\\
    rate &  & 0.28 & -0.03 & -0.82 & -0.43\\
    1.50 & 1.1418e-01 & 7.2736e-02 & 4.4160e-02 & 2.5597e-02 & 1.4225e-02\\
    rate &  & 0.65 & 0.72 & 0.79 & 0.85\\
    2.00 & 7.6310e-03 & 2.0436e-03 & 5.2185e-04 & 1.3121e-04 & 3.2850e-05\\
    rate &  & 1.90 & 1.97 & 1.99 & 2.00\\
    2.50 & 2.6191e-04 & 3.2850e-05 & 4.1084e-06 & 5.1389e-07 & 6.4574e-08\\
    rate &  & 3.00 & 3.00 & 3.00 & 2.99\\ \hline
\end{tabular}\end{table}

~\Cref{thm:eHMM} illustrates how the combined effect of the singular perturbation and the homogenization when $\iota\to 0$ in different regimes. It follows from~\cref{tab:epsEssential},~\cref{tab:epsNatural} and~\cref{tab:epsFree} that the convergence rates of $e_F$ (HMM) are nearly $\mathcal{O}(\varepsilon^{2(1-\gamma)})$ when $\gamma<1$. Due to the influence of the resonance error, it reduces the convergence rate when $\gamma>1$. 

To eliminate the effect of resonance error, we tested the periodic boundary conditions as presented in Table {tab:epsPeriodic}. The table shows that $e_F$(HMM) are $\mathcal{O}(\varepsilon^{2\abs{1-\gamma}})$. When $\gamma=1$, relative errors are small and the rates of convergence are independent of $\varepsilon$. 
\begin{table}[htbp]\footnotesize
    \centering
\caption{$e_F$(HMM) w.r.t. the microscopic mesh size $h$ for the 1st example, with weighted average, $\varepsilon=2^{-5}$ and $\delta=2^{-1}$.}\label{tab:ehHMM}
 \begin{tabular}{l|ccccc}
        \hline $\gamma\backslash h$ & $2^{-5}$ & $2^{-6}$ & $2^{-7}$ & $2^{-8}$ & $2^{-9}$\\ \hline
        &\multicolumn{4}{l}{Essential boundary condition}\\
        0.25 & 1.6364e-04 & 3.0990e-04 & 3.7226e-04 & 3.8698e-04 & 3.9050e-04\\
        rate &  & -0.92 & -0.26 & -0.06 & -0.01\\
        1.00 & 4.6743e-02 & 1.1214e-02 & 2.5144e-03 & 6.1601e-04 & 1.7062e-04\\
        rate &  & 2.06 & 2.16 & 2.03 & 1.85\\
        4.00 & 3.4707e-01 & 7.4217e-02 & 6.0430e-03 & 3.5164e-04 & 6.6755e-05\\
        rate &  & 2.23 & 3.62 & 4.10 & 2.40\\ \hline
        &\multicolumn{4}{l}{Natural boundary condition}\\
        0.25 & 1.6295e-04 & 3.0940e-04 & 3.7225e-04 & 3.8701e-04 & 3.9053e-04\\
        rate &  & -0.93 & -0.27 & -0.06 & -0.01\\
        1.00 & 4.6750e-02 & 1.1218e-02 & 2.5145e-03 & 6.1755e-04 & 1.7228e-04\\
        rate &  & 2.06 & 2.16 & 2.03 & 1.84\\
        4.00 & 3.4660e-01 & 7.4208e-02 & 6.0320e-03 & 3.4621e-04 & 6.4786e-05\\
        rate &  & 2.22 & 3.62 & 4.12 & 2.42\\ \hline
        &\multicolumn{4}{l}{Free boundary condition}\\
        0.25  & 1.6314e-04 & 3.0936e-04 & 3.7236e-04 & 3.8710e-04 & 3.9060e-04\\
        rate &  & -0.92 & -0.27 & -0.06 & -0.01\\
        1.00 & 4.6683e-02 & 1.1185e-02 & 2.5002e-03 & 6.0859e-04 & 1.6431e-04\\
        rate &  & 2.06 & 2.16 & 2.04 & 1.89\\
        4.00 & 3.4655e-01 & 7.4172e-02 & 6.0132e-03 & 3.1711e-04 & 3.1241e-05\\
        rate &  & 2.22 & 3.62 & 4.25 & 3.34\\ \hline
        &\multicolumn{4}{l}{Periodic boundary condition}\\
        0.25 & 1.6411e-04 & 3.1017e-04 & 3.7241e-04 & 3.8710e-04 & 3.9061e-04\\
        rate &  & -0.92 & -0.26 & -0.06 & -0.01\\
        1.00 & 4.6695e-02 & 1.1188e-02 & 2.5004e-03 & 6.0227e-04 & 1.5809e-04\\
        rate &  & 2.06 & 2.16 & 2.05 & 1.93\\
        4.00 & 3.4656e-01 & 7.4172e-02 & 6.0110e-03 & 3.1725e-04 & 2.0052e-05\\
        rate &  & 2.22 & 3.63 & 4.24 & 3.98\\ \hline
    \end{tabular} 
\end{table}

Finally, we employ the weighted average to mitigate the resonance errors, primarily focusing on the error caused by the cell discretization. It followed from~\cref{tab:ehHMM} that $e_F$(HMM) are $\mathcal{O}(1)$ when $\gamma<1$ and at least $\mathcal{O}(h^2)$ when $\gamma\ge 1$, which is consistent with ~\cref{thm:eHMM}. Despite the stronger boundary layer effect, the performance of the essential boundary conditions are comparable with the performance of the natural boundary condition. It seems to achieve nearly $\mathcal{O}(h^4)$ when $\gamma>1$ since the corrector $\boldsymbol{\chi}$ are smoother than we have expected; See Remark{rmk:smooth}.

\subsection{Accuracy of the homogenized solution}
In the second example, we test the accuracy of HMM for singular perturbation homogenization problems with the locally periodic coefficient
\[
    \vareps A(\boldsymbol{x})=\begin{pmatrix}
        \dfrac{20\pi+4\pi^2\cos(2\pi x_1/\varepsilon)}{2\pi-\cos(2\pi x_1/\varepsilon)} & 2+\sin(2\pi x_1)+\cos(2\pi x_2/\varepsilon)\\
        3+\cos(2\pi x_2)+\sin(2\pi x_1/\varepsilon) & \dfrac{22\pi+4\pi^2\sin(2\pi x_2/\varepsilon)}{2\pi-\sin(2\pi x_2/\varepsilon)}
    \end{pmatrix}.
\]
A direct calculation gives the effective matrix 
\[
    \bar{A}(\boldsymbol{x})=\begin{pmatrix}
        \bar A_{11} & 2+\sin(2\pi x_1)\\
        3+\cos(2\pi x_2) & \bar A_{22}
    \end{pmatrix}
\]
with
\[\bar A_{11}=\begin{cases}
    4\pi(2\pi^2 + 5) / \sqrt{4\pi^2 - 1} - 4\pi^2 & 0<\gamma<1,\\
    10 & \gamma=1,\\
    4\pi^2 / [(5 + 2\pi^2) / \sqrt{25 - \pi^2} - 1] & \gamma>1,
\end{cases}\]
and
\[\bar A_{22}=\begin{cases}
    2\pi(4\pi^2+11) / \sqrt{4\pi^2 - 1} - 4\pi^2 & 0<\gamma<1,\\
    11 & \gamma=1,\\
    4\pi^2 / [(11 + 4\pi^2) / \sqrt{121 - 4\pi^2} - 1] & \gamma>1.
\end{cases}.
\]

Let the solution of the homogenization problem
\(
 \bar{u}(\boldsymbol{x})=\sin(\pi x_1)\sin(\pi x_2),
\)
and we compute the source term $f$ by~\cref{eq:u}.

We solve the microscopic cell problems~\cref{eq:aeps} on cells posed over all vertices of $\mathcal{T}_H$ and the macroscopic problem~\cref{eq:uH} by the vertices based integration scheme~\cite[eq. (2.7)]{Du:2010} with $\boldsymbol{x}_l$ the vertices of $K$ and $\omega_l=1/3$ for $l=1,2,3$. We report the relative $H^1$-error $\nm{\nabla(\bar{u}-u_H)}{L^2(\Omega)}/\nm{\nabla\bar{u}}{L^2(\Omega)}$ in~\cref{tab:uHH1}.
\begin{table}[htbp]\footnotesize
    \centering
\caption{Relative $H^1$-errors w.r.t. the macroscopic mesh size $H$ for the 2nd example, without weighted average, $\varepsilon=2^{-5},\delta=2^{-2}$ and $h=2^{-8}$.}
\label{tab:uHH1}
    \begin{tabular}{l|ccccc}
        \hline $\gamma\backslash H$ & $2^{-1}$ & $2^{-2}$ & $2^{-3}$ & $2^{-4}$ & $2^{-5}$\\ \hline
        &\multicolumn{4}{l}{Essential boundary condition}\\
        0.25 & 8.2439e-01 & 4.7977e-01 & 2.4203e-01 & 1.2392e-01 & 6.1370e-02\\
        rate & & 0.78 & 0.99 & 0.97 & 1.01\\
        1.00 & 8.2026e-01 & 4.7772e-01 & 2.4103e-01 & 1.2357e-01 & 6.1434e-02\\
        rate & & 0.78 & 0.99 & 0.96 & 1.01\\
        4.00 & 8.1222e-01 & 4.7459e-01 & 2.3968e-01 & 1.2412e-01 & 6.3624e-02\\
        rate & & 0.78 & 0.99 & 0.95 & 0.96\\ \hline
        &\multicolumn{4}{l}{Natural boundary condition}\\
        0.25 & 8.2447e-01 & 4.7982e-01 & 2.4206e-01 & 1.2393e-01 & 6.1377e-02\\
        rate & & 0.78 & 0.99 & 0.97 & 1.01\\
        1.00 & 8.2326e-01 & 4.7941e-01 & 2.4177e-01 & 1.2381e-01 & 6.1348e-02\\
        rate & & 0.78 & 0.99 & 0.97 & 1.01\\
        4.00 & 8.1382e-01 & 4.7545e-01 & 2.3992e-01 & 1.2390e-01 & 6.2933e-02\\
        rate & & 0.78 & 0.99 & 0.95 & 0.98\\ \hline&\multicolumn{4}{l}{Free boundary condition}\\
        0.25 & 8.2459e-01 & 4.7989e-01 & 2.4209e-01 & 1.2396e-01 & 6.1389e-02\\
        rate & & 0.78 & 0.99 & 0.97 & 1.01\\
        1.00 & 8.2586e-01 & 4.8089e-01 & 2.4256e-01 & 1.2423e-01 & 6.1639e-02\\
        rate & & 0.78 & 0.99 & 0.97 & 1.01\\
        4.00 & 8.2430e-01 & 4.8134e-01 & 2.4248e-01 & 1.2415e-01 & 6.1602e-02\\
        rate & & 0.78 & 0.99 & 0.97 & 1.01\\ \hline&\multicolumn{4}{l}{Periodic boundary condition}\\
        0.25 & 8.2441e-01 & 4.7978e-01 & 2.4204e-01 & 1.2393e-01 & 6.1372e-02\\
        rate & & 0.78 & 0.99 & 0.97 & 1.01\\
        1.00 & 8.2364e-01 & 4.7960e-01 & 2.4191e-01 & 1.2387e-01 & 6.1350e-02\\
        rate & & 0.78 & 0.99 & 0.97 & 1.01\\
        4.00 & 8.2340e-01 & 4.8078e-01 & 2.4222e-01 & 1.2403e-01 & 6.1508e-02\\
        rate & & 0.78 & 0.99 & 0.97 & 1.01\\ \hline
    \end{tabular}
\end{table}

Even though we do not employ the weighted average, it is accurate enough to achieve the precision we desire. Meanwhile the first order rate of convergences of the relative $H^1$-error has been achieved for $\nm{\bar u-u_H}{H^1}$, as predicted in~\cref{eq:uHH1}.

\section{Conclusion}\label{sec:concl}
We introduce a HMM-FEM for solving a singular perturbation homogenization problem. Our method is robust, as it does not rely on the relation between $\iota$ and $\varepsilon$, and explicit form of $A^\varepsilon$, thereby guaranteeing convergence. Unlike the classical second-order elliptic homogenization problem, our analysis reveals that, in certain scenarios, the resonance error and discretization error depend solely on $\varepsilon$. Additionally, we have observed boundary layer effects which, importantly, do not affect the convergence rate, even when essential boundary conditions are imposed on the cell problems.

\appendix
\section{Estimates for the solution of the cell problems}\label{sec:apd}
In this appendix, we discuss the discretization error of the cell problem, which is generally dominated by the interpolation error of the cell solution, which, unfortunately, may not be entirely reasonable. On the one hand, the regularity of the solution to Problem~\cref{eq:veps} depends on the smoothness of the domain. For general boundary value problems on convex polygonal domains, the solution $\vareps v \not\in H^3(I_\delta)$; see~\cite[Theorem 2]{Blum:1980}. Conversely, the cell is usually a cube. On the other hand, what exacerbates the situation is that, even if $\vareps v \in H^3(I_\delta)$ can be guaranteed for essential boundary value problems when $d=2,3$, the interpolation error is only $\mathcal{O}(\sqrt{h/\varepsilon})$ as $\gamma \to \infty$ due to the boundary layer effect. We shall elaborate on this issue.

As $\varepsilon \to 0$, Problem~\cref{eq:veps} with the essential boundary condition tends to
\begin{equation}\label{eq:v0eps}
\left\{\begin{aligned}
-\mathrm{div}(\vareps A\nabla\vareps v_0)&=0 && \text{in }I_\delta,\\ 
\vareps v_0&=V_l && \text{on }\partial I_\delta.
\end{aligned}\right.
\end{equation}

The following lemma regarding the symmetrical matrix $A$ has been provided in~\cite[Appendix A]{Du:2010scheme}, and we extend it to non-symmetrical matrix $A$.
\begin{lemma}
If $\nm{\nabla_{\boldsymbol{y}}A}{L^\infty(\Omega\times Y)}$is bounded, then
\begin{equation}\label{eq:v0epsH2}
\nm{\nabla\vareps{v_0}}{L^2(I_\delta)}+\varepsilon\nm{\nabla^2\vareps{v_0}}{L^2(I_\delta)}\le C\nm{\nabla  V_l}{L^2(I_\delta)}.
\end{equation}
\end{lemma}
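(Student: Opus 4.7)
\emph{Plan.} I would mirror the strategy of~\cite[Appendix A]{Du:2010scheme}, which handles the symmetric case, and extend it to non-symmetric $A$ by invoking the adjoint corrector whenever we dualize. The $H^1$ half of~\cref{eq:v0epsH2} is routine: testing~\cref{eq:v0eps} with $\vareps v_0-V_l\in H_0^1(I_\delta)$ and using the ellipticity~\cref{eq:cvcv} yields $\nm{\nabla\vareps v_0}{L^2(I_\delta)}\le(\Lambda/\lambda)\nm{\nabla V_l}{L^2(I_\delta)}$, and the Lipschitz-in-$\boldsymbol{y}$ hypothesis on $A$ is not yet needed at this stage.

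For the $\varepsilon$-weighted $H^2$ part, I would introduce the first-order approximation
\[
\tilde v(\boldsymbol{x}){:}=V_l(\boldsymbol{x})+\varepsilon\boldsymbol{\chi}(\boldsymbol{x},\boldsymbol{x}/\varepsilon)\cdot\nabla V_l,
\]
where $\boldsymbol{\chi}$ is the second-order corrector attached to $A$. Under the hypothesis $\nm{\nabla_{\boldsymbol{y}}A}{L^\infty(\Omega\times Y)}<\infty$, one has $\boldsymbol{\chi}\in[L^\infty(\Omega;H^2(Y))]^d$, so a direct chain-rule computation gives $\nm{\nabla^2\tilde v}{L^2(I_\delta)}\le C\varepsilon^{-1}\nm{\nabla V_l}{L^2(I_\delta)}$. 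The remainder $\vareps r{:}=\vareps v_0-\tilde v$ solves a Dirichlet problem whose inhomogeneity comes from the slow $\boldsymbol{x}$-variation of $A$ and $\boldsymbol{\chi}$ and whose boundary data $-\varepsilon\boldsymbol{\chi}(\boldsymbol{x},\boldsymbol{x}/\varepsilon)\cdot\nabla V_l$ is $\mathcal{O}(\varepsilon)$ in $L^\infty(\partial I_\delta)$ but oscillates at scale $\varepsilon$; this is the source of the boundary layer.

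The main obstacle is to show $\nm{\nabla^2\vareps r}{L^2(I_\delta)}\le C\varepsilon^{-1}\nm{\nabla V_l}{L^2(I_\delta)}$. My strategy is to split $\vareps r$ using the cutoff $\vareps\rho$ of~\cref{eq:rhoeps}. The interior piece $\vareps\rho\vareps r$ vanishes on $\partial I_\delta$, and after rewriting the equation as $A^\varepsilon_{ij}\partial_{ij}\vareps r=-(\partial_iA^\varepsilon_{ij})\partial_j\vareps r+\text{(lower order)}$, each $\boldsymbol{y}$-derivative of $A^\varepsilon$ costs exactly one power of $\varepsilon^{-1}$, matching the target bound after standard interior $H^2$ regularity. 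The boundary-layer piece $(1-\vareps\rho)\vareps r$ is controlled through the trace inequality~\cref{eq:trc} applied to $\boldsymbol{\chi}(\boldsymbol{x},\boldsymbol{x}/\varepsilon)\cdot\nabla V_l$, combined with the uniform $H^2$-bound on $\boldsymbol{\chi}$. Because $A$ is not symmetric, the duality step that closes the argument must be performed against the adjoint problem with $A^\top$, whose corrector enjoys identical estimates under the same Lipschitz hypothesis. The delicate point -- the reason the Lipschitz-in-$\boldsymbol{y}$ assumption is indispensable -- is that the $\varepsilon^{-1}$ factor from oscillations of $\boldsymbol{\chi}(\cdot/\varepsilon)$ must not be amplified when the estimate crosses the boundary layer, and this is precisely what the uniform $H^2$-regularity of $\boldsymbol{\chi}$ rules out; assembling the bulk and boundary-layer contributions then yields~\cref{eq:v0epsH2}.
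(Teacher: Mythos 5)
Your $H^1$ bound is correct and coincides with the paper's (energy estimate with test function $\vareps v_0-V_l$ and the structure of $\mathcal{M}(\lambda,\Lambda;\Omega)$; the Lipschitz hypothesis is indeed not needed there). The $H^2$ part, however, has a genuine gap. The paper's argument is direct: it writes $\mathrm{div}(\vareps A\nabla\vareps v_0)=\mathrm{div}((\vareps A)^\top)\cdot\nabla\vareps v_0+\vareps A_S:\nabla^2\vareps v_0$ with $\vareps A_S=\tfrac12(\vareps A+(\vareps A)^\top)$ (only the symmetric part pairs with the symmetric tensor $\nabla^2\vareps v_0$), observes that $\vareps A_S$ inherits the ellipticity, and then applies the nondivergence-form $H^2$ a priori estimate of~\cite[Appendix A]{Du:2010scheme} on the convex cell to $\vareps v_0-V_l\in H_0^1(I_\delta)$. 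The hypothesis $\nm{\nabla_{\boldsymbol y}A}{L^\infty(\Omega\times Y)}<\infty$ enters only through $\abs{\nabla\vareps A}\le C\varepsilon^{-1}$, which produces the $\varepsilon^{-1}$ in both the perturbation term $\nm{\nabla\vareps A_S}{L^\infty}\nm{\nabla(\vareps v_0-V_l)}{L^2}$ and the right-hand side $\mathrm{div}((\vareps A)^\top)\cdot\nabla\vareps v_0$. No duality and no corrector are involved.

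Your corrector-based route does not close. First, the remainder $\vareps r=\vareps v_0-\tilde v$ still solves an equation with the same oscillatory coefficient $\vareps A$, so bounding $\nm{\nabla^2\vareps r}{L^2(I_\delta)}$ up to the boundary requires precisely the global $H^2$ a priori estimate for nondivergence-form equations on the convex cell that you were trying to circumvent --- the first-order approximation buys you nothing for this worst-case bound (which, unlike the $\mathcal{O}(\sqrt{\varepsilon})$ corrector estimates, needs no two-scale structure at all). Second, your treatment of the boundary-layer piece is not an estimate: the trace inequality~\cref{eq:trc} and the uniform $H^2(Y)$ bound on $\boldsymbol\chi$ control $L^2$ and first-derivative quantities of $\tilde v$ in the strip $\vareps{I_\delta}$, not the second derivatives of $\vareps r$ near $\partial I_\delta$, and you give no mechanism for lifting the oscillatory boundary datum $-\varepsilon\boldsymbol\chi(\cdot,\cdot/\varepsilon)\cdot\nabla V_l$ with controlled $H^2$ norm. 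Third, the adjoint/duality step is not the right tool here: non-symmetry is an issue for identifying effective coefficients via duality, but for an $H^2$ regularity estimate it is resolved simply by symmetrizing the second-order part as above. I would recommend abandoning the corrector decomposition for this lemma and arguing directly from the nondivergence-form rewriting.
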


\begin{proof}
Multiplying both sides of~\cref{eq:v0epsH2} by $z=\vareps v_0-V_l$, integration by parts and using~\cref{eq:cvcv}, we obtain
\begin{equation}\label{eq:v0epsH1}
\nm{\nabla\vareps v_0}{L^2(I_\delta)}\le C\nm{\nabla V_l}{L^2(I_\delta)}.
\end{equation}

A direct calculation gives
\[
\mathrm{div}(\vareps A\nabla\vareps v_0)=\mathrm{div}((\vareps A)^\top)\cdot\nabla\vareps v_0+\vareps A:\nabla^2\vareps  v_0=\mathrm{div}((\vareps A)^\top)\cdot\nabla\vareps v_0+\vareps A_S:\nabla^2\vareps v_0,
\]
where the symmetric part is defined as
\(
\vareps A_S{:}=\dfrac12\lr{\vareps A+(\vareps A)^\top}.
\)
Therefore, 
\[
\vareps A_S(\boldsymbol{x})\boldsymbol{\xi}\cdot\boldsymbol{\xi}=\vareps A(\boldsymbol{x})\boldsymbol{\xi}\cdot\boldsymbol{\xi}\ge\lambda\abs{\boldsymbol{\xi}}^2.
\]
We rewrite~\cref{eq:v0eps} as
\[
\left\{\begin{aligned}
-\vareps A_S:\nabla^2(\vareps v_0-V_l)&=\mathrm{div}((\vareps A)^\top)\cdot\nabla\vareps v_0 &&\text{in\;}I_\delta,\\
    \vareps v_0-V_l&=0&&\text{on\;}\partial I_\delta.
\end{aligned}\right.
\]
According to ~\cite[Appendix A]{Du:2010scheme} for the symmetric matrix $\vareps A_S$,
\begin{align*}
\nm{\nabla^2\vareps v_0}{L^2(I_\delta)}&\le C\lr{\nm{\nabla\vareps A_S}{L^\infty(I_\delta)}\nm{\nabla(\vareps  v_0-V_l)}{L^2(I_\delta)}+\nm{\mathrm{div}((\vareps A)^\top)\cdot\nabla\vareps v_0}{L^2(I_\delta)}}\\
&\le C\varepsilon^{-1}\nm{\nabla V_l}{L^2(I_\delta)},
\end{align*}
where we have used~\cref{eq:v0epsH1} in the last step. This gives~\cref{eq:v0epsH2}.
\end{proof}

Next we estimate $\vareps v-\vareps v_0$.
\begin{lemma}
Let $\iota = \mu\varepsilon^\gamma$ with $\gamma > 1$ and $\nm{\nabla_{\boldsymbol {y}}A}{L^\infty(\Omega\times Y)}$ is bounded, and $\vareps v$ is the solution of ~\cref{eq:veps} with the essential boundary conditions for $d=2,3$, then
\begin{equation}\label{eq:vepsv0epsH1}
\nm{\nabla(\vareps v-\vareps v_0)}{L^2(I_\delta)}\le C\varepsilon^{(\gamma-1)/2}\nm{\nabla V_l}{L^2(I_\delta)},
\end{equation}
and
\begin{equation}\label{eq:vepsH2}
\nm{\nabla\vareps v}{\iota}\le
C\varepsilon^{-(1+\gamma)/2}\nm{\nabla V_l}{L^2(I_\delta)}.
\end{equation}
\end{lemma}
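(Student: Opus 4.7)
Set $w := \vareps v - \vareps v_0$. Subtracting the PDEs yields
\[
\iota^2\Delta^2 w - \mathrm{div}(\vareps A\nabla w) = -\iota^2\Delta^2 \vareps v_0 \quad\text{in } I_\delta,
\]
with $w|_{\partial I_\delta}=0$ but $\partial_{\boldsymbol n}w|_{\partial I_\delta} = \partial_{\boldsymbol n}V_l - \partial_{\boldsymbol n}\vareps v_0 =: g$, which is generally nonzero since $\vareps v - V_l \in H_0^2(I_\delta)$ while $\vareps v_0 - V_l$ only vanishes in the $H_0^1$ sense. This Neumann-type mismatch is precisely the boundary layer, and the entire task is to show that it is confined to an $\mathcal{O}(\iota)$-neighborhood of $\partial I_\delta$.

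I would introduce an explicit boundary-layer lift $\xi\in H^2(I_\delta)$ satisfying $\xi|_{\partial I_\delta} = 0$ and $\partial_{\boldsymbol n}\xi|_{\partial I_\delta} = g$, supported in an $\iota$-strip. A standard construction takes $\xi$ as a product of a one-dimensional boundary-layer profile such as $d\,e^{-d/\iota}$ (with $d$ the distance to $\partial I_\delta$) with a suitable extension of $g$ into $I_\delta$, glued across the faces of the cube via a partition of unity. Direct computation of the $L^2$ norm over the $\iota$-strip gives the scaling
\[
\iota^{2k-1}\nm{\nabla^k \xi}{L^2(I_\delta)}^2 \le C \nm{g}{L^2(\partial I_\delta)}^2, \qquad k = 1, 2, 3.
\]
Combining with the standard trace inequality applied to $\nabla \vareps v_0$ and using~\cref{eq:v0epsH2} yields $\nm{g}{L^2(\partial I_\delta)}^2 \le C \varepsilon^{-1} \nm{\nabla V_l}{L^2(I_\delta)}^2$, so with $\iota=\mu\varepsilon^\gamma$ one obtains $\nm{\xi}{\iota}^2 \le C \varepsilon^{\gamma - 1} \nm{\nabla V_l}{L^2(I_\delta)}^2$.

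The residual $\tilde w := w - \xi$ then lies in $H_0^2(I_\delta)$ and solves
\[
\iota^2 \Delta^2 \tilde w - \mathrm{div}(\vareps A\nabla \tilde w) = -\iota^2 \Delta^2 (\vareps v_0 + \xi) + \mathrm{div}(\vareps A\nabla \xi).
\]
Testing against $\tilde w$, invoking coercivity of $\vareps a$ on $H_0^2(I_\delta)$ together with Young's inequality, and using $\iota\nm{\nabla^2 \vareps v_0}{L^2}\le C\mu\varepsilon^{\gamma - 1}\nm{\nabla V_l}{L^2}$ from~\cref{eq:v0epsH2}, I obtain $\nm{\tilde w}{\iota} \le C \varepsilon^{(\gamma-1)/2} \nm{\nabla V_l}{L^2(I_\delta)}$, where $\gamma > 1$ is used to dominate $\varepsilon^{2(\gamma-1)}$ by $\varepsilon^{\gamma-1}$. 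Applying the triangle inequality to $\nabla w = \nabla \tilde w + \nabla \xi$ then delivers~\cref{eq:vepsv0epsH1}.

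For~\cref{eq:vepsH2}, I would decompose $\vareps v = \vareps v_0 + \tilde w + \xi$ and bound $\nm{\nabla \cdot}{\iota}$ of each summand. The $k = 2, 3$ cases of the scaling above give $\nm{\nabla^2 \xi}{L^2} + \iota\nm{\nabla^3 \xi}{L^2} \le C \iota^{-1/2}\nm{g}{L^2(\partial I_\delta)} \le C \varepsilon^{-(1+\gamma)/2} \nm{\nabla V_l}{L^2(I_\delta)}$, and $\vareps v_0$ is handled via~\cref{eq:v0epsH2} supplemented by an $H^3$-regularity estimate for $-\mathrm{div}(\vareps A\nabla \cdot)$ on the cube, which is exactly where the hypothesis $d \in \{2, 3\}$ enters. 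The main obstacle is the higher-order residual bound $\nm{\nabla^2 \tilde w}{L^2} + \iota\nm{\nabla^3 \tilde w}{L^2} \le C \varepsilon^{-(1+\gamma)/2} \nm{\nabla V_l}{L^2(I_\delta)}$; I would derive it by rewriting the equation as $\iota^2 \Delta^2 \tilde w = F'$ with $F' := \mathrm{div}(\vareps A\nabla \tilde w) + \mathrm{div}(\vareps A\nabla \xi) - \iota^2 \Delta^2(\vareps v_0 + \xi) \in H^{-1}(I_\delta)$, bounding $\nm{F'}{H^{-1}}$ from the preceding steps, and invoking fourth-order elliptic regularity on $I_\delta$ (again available in $d \in \{2, 3\}$).
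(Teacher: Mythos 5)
Your strategy --- an explicit boundary-layer lift $\xi$ killing the normal-derivative mismatch $g=\partial_{\boldsymbol n}(V_l-\vareps v_0)$, followed by an energy estimate for the remainder in $H_0^2(I_\delta)$ --- is a recognizable alternative to the paper's argument, and the target scalings you quote for $\nm{\xi}{\iota}$ and $\nm{\tilde w}{\iota}$ are the right ones. But as written there are genuine gaps. First, the lift itself: a profile of the form $g(\boldsymbol x')\,d\,e^{-d/\iota}$ only has controlled $\nabla^2\xi$ and $\nabla^3\xi$ if the \emph{tangential} derivatives $\nabla'g$, $\nabla'^2g$ are controlled on $\partial I_\delta$, and all you know is $g=\partial_{\boldsymbol n}V_l-\partial_{\boldsymbol n}\vareps v_0$ with $\vareps v_0\in H^2(I_\delta)$, so $g$ is merely in $H^{1/2}(\partial I_\delta)$; moreover $I_\delta$ is a cube, so the gluing across edges and corners is itself delicate. (Your displayed scaling $\iota^{2k-1}\nm{\nabla^k\xi}{L^2}^2\le C\nm{g}{L^2(\partial I_\delta)}^2$ is also inconsistent with the conclusion $\nm{\xi}{\iota}^2\le C\varepsilon^{\gamma-1}\nm{\nabla V_l}{L^2}^2$ you draw from it --- the exponent should be $2k-3$ --- though that is a slip rather than a conceptual error.) Second, and more seriously, the higher-order bounds: you invoke an ``$H^3$-regularity estimate for $-\mathrm{div}(\vareps A\nabla\cdot)$ on the cube'' for $\vareps v_0$, but with $A$ only Lipschitz in $\boldsymbol y$ and a domain with corners no such estimate is available (\cref{eq:v0epsH2} stops at $H^2$, and that is already the limit), and your residual equation $\iota^2\Delta^2\tilde w=F'$ carries the term $\iota^2\Delta^2\vareps v_0$, which cannot be placed in $H^{-1}(I_\delta)$ when $\vareps v_0$ is only $H^2$.

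The paper sidesteps both obstacles by never applying $\Delta^2$ to $\vareps v_0$: it writes $\Delta^2(\vareps v-V_l)=\iota^{-2}\mathrm{div}\bigl(\vareps A\nabla(\vareps v-\vareps v_0)\bigr)$, whose right-hand side is the divergence of an $L^2$ field, and uses $H^3$ regularity of the \emph{clamped biharmonic} problem on convex polyhedra in $d=2,3$ (this is where the dimension restriction enters) to get $\nm{\nabla^3\vareps v}{L^2(I_\delta)}\le C\iota^{-2}\nm{\nabla(\vareps v-\vareps v_0)}{L^2(I_\delta)}$. The boundary mismatch is then handled not by a lift but by the variational identity
\begin{equation*}
\iota^2(\nabla^2\vareps v,\nabla^2z)_{I_\delta}+(\vareps A\nabla(\vareps v-\vareps v_0),\nabla z)_{I_\delta}
=\iota^2\int_{\partial I_\delta}\partial_{\boldsymbol n}^2\vareps v\,\partial_{\boldsymbol n}z\,\mathrm{d}\sigma(\boldsymbol x)
\end{equation*}
with $z=\vareps v-\vareps v_0\in H_0^1(I_\delta)\cap H^2(I_\delta)$, estimating the boundary term by scaled trace inequalities applied to $\partial_{\boldsymbol n}^2\vareps v$ and $\partial_{\boldsymbol n}\vareps v_0$; this requires only $\vareps v\in H^3$ and $\vareps v_0\in H^2$ and produces the factor $\varepsilon^{(\gamma-1)/2}$ directly. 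If you wish to keep your lift-based route, you would at minimum need to replace $g$ by a tangentially mollified version (at scale $\iota$ or $\varepsilon$) and quantify the resulting defect in $\partial_{\boldsymbol n}\tilde w$, and you would still need to recast the residual equation so that the biharmonic operator never acts on $\vareps v_0$.
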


\begin{proof}
Using~\cref{eq:v0eps}, we rewrite~\cref{eq:veps} with the essential boundary conditions as
\[\left\{\begin{aligned}
    \Delta^2(\vareps v-V_l)&=\iota^{-2}\mathrm{div}(\vareps A\nabla(\vareps v-\vareps v_0))&&\text{in\;}I_\delta,\\
    \vareps v-V_l&=\partial_{\boldsymbol{n}}(\vareps v-V_l)=0&&\text{on\;}\partial I_\delta.
\end{aligned}\right.\]
Scaling by $\hat{\varepsilon}{:}=\varepsilon/\delta,\hat{\iota}{:}=\iota/\delta,\hat{\boldsymbol{x}}{:}=\boldsymbol{x}/\delta$ and with the notation $\hat{v}^\varepsilon(\hat{\boldsymbol{x}}){:}=\vareps v(\boldsymbol{x}),\hat{V}_l(\hat{\boldsymbol{x}}){:}=V_l(\boldsymbol{x}),\vareps{\hat{v}_0}(\hat{\boldsymbol{x}}){:}=\vareps v_0(\boldsymbol{x})$, we rewrite the above equation as the boundary value problems
\[\left\{\begin{aligned}
    \Delta^2(\hat{v}^\varepsilon-\hat{V}_l)&=\hat{\iota}^{-2}\mathrm{div}(\vareps A(\delta\cdot)\nabla(\hat{v}^\varepsilon-\vareps{\hat{v}_0}))&&\text{in\;} I_1,\\
    \vareps{\hat v}-\hat{V}_l&=\partial_{\boldsymbol n}(\vareps{\hat{v}}-\hat{V}_l)=0&&\text{on\;}\partial I_1.
\end{aligned}\right.\]
By~\cite[Theorem 4.3.10]{Mazya:2010}, we get
\[
\nm{\hat{v}^\varepsilon-\hat{V}_l}{H^3(I_1)}\le C\hat{\iota}^{-2}\nm{\mathrm{div}(\vareps A(\delta\cdot)\nabla(\hat{v}^\varepsilon-\vareps{\hat{v}_0}))}{H^{-1}(I_1)}\le C\hat{\iota}^{-2}\nm{\nabla(\hat{v}^\varepsilon-\vareps{\hat{v}_0})}{L^2(I_1)}.
\]
Rescaling back to $T_\delta$, we obtain
\begin{equation}\label{eq:vepsH3}\begin{aligned}
    \nm{\nabla^3\vareps v}{L^2(I_\delta)}&\le C\delta^{d/2-3}\nm{\nabla^3\hat{v}^\varepsilon}{L^2(I_1)}\le C\delta^{d/2-1}\iota^{-2}\nm{\nabla(\hat{v}^\varepsilon-\vareps{\hat{v}_0})}{L^2(I_1)}\\
    &\le C\iota^{-2}\nm{\nabla(\vareps v-\vareps v_0)}{L^2(I_\delta)}.
\end{aligned}\end{equation}

Proceeding along the same line of~\cite[Theorem 5.2]{Tai:2001}, using the definitions of~\cref{eq:veps} and~\cref{eq:v0eps}, for any $z\in H_0^1(I_\delta)\cap H^2(I_\delta)$, we write 
\[
\iota^2(\nabla^2\vareps v,\nabla^2z)_{I_\delta}+(\vareps A\nabla(\vareps v-\vareps v_0),\nabla z)_{I_\delta}=\iota^2\int_{\partial I_\delta}\partial_{\boldsymbol n}^2\vareps v\partial_{\boldsymbol n}z\mathrm{d}\sigma(\boldsymbol x).
\]
Choosing $z=\vareps v-\vareps v_0$ in the above identity, we obtain
\[
\iota^2\nm{\nabla^2\vareps v}{L^2(I_\delta)}^2+\nm{\nabla(\vareps v-\vareps v_0)}{L^2(I_\delta)}^2\le\iota^2(\nabla^2\vareps v,\nabla^2\vareps v_0)_{I_\delta}-\iota^2\int_{\partial I_\delta}\partial_{\boldsymbol n}^2\vareps v\partial_{\boldsymbol n}\vareps v_0\mathrm{d}\sigma(\boldsymbol x).
\]

The first term may be bounded by~\cref{eq:v0epsH2}, it remains to estimate the boundary term. According to the trace inequality and~\cref{eq:vepsH3},
\begin{align*}
\iota^{3/2}\nm{\partial_{\boldsymbol n}^2\vareps v}{L^2(\partial I_\delta)}&\le  C\iota^{3/2}\lr{\delta^{-1/2}\nm{\partial_{\boldsymbol n}^2\vareps v}{L^2(I_\delta)}+\nm{\partial_{\boldsymbol  n}^2\vareps v}{L^2(I_\delta)}^{1/2}\nm{\nabla^3\vareps v}{L^2(I_\delta)}^{1/2}}\\
&\le C(\iota\nm{\nabla^2\vareps v}{L^2(I_\delta)}+\nm{\nabla(\vareps v-\vareps v_0)}{L^2(I_\delta)}),
\end{align*}
where we have used the relation $\iota\ll\varepsilon\ll\delta$. 

Invoking the trace inequality again and using~\cref{eq:v0epsH2}, we obtain
\begin{align*}
\iota^{1/2}\nm{\partial_{\boldsymbol n}\vareps v_0}{L^2(\partial I_\delta)}&\le  C\iota^{1/2}\lr{\delta^{-1/2}\nm{\partial_{\boldsymbol n}\vareps v_0}{L^2(I_\delta)}+\nm{\partial_{\boldsymbol n}\vareps  v_0}{L^2(I_\delta)}^{1/2}\nm{\nabla^2\vareps v_0}{L^2(I_\delta)}^{1/2}}\\
&\le C\sqrt{\iota/\varepsilon}\nm{\nabla V_l}{L^2(I_\delta)}\\
&\le C\varepsilon^{(\gamma-1)/2}\nm{\nabla  V_l}{L^2(I_\delta)},
\end{align*}
where we have used the fact $\varepsilon\ll\delta$. Summing up all the above estimates, we get
\[
\iota\nm{\nabla^2\vareps v}{L^2(I_\delta)}+\nm{\nabla(\vareps v-\vareps v_0)}{L^2(I_\delta)}\le  C\varepsilon^{(\gamma-1)/2}\nm{\nabla V_l}{L^2(I_\delta)},
\]
This gives~\cref{eq:vepsv0epsH1} and the $H^2$-estimate of~\cref{eq:vepsH2}. 

Substituting~\cref{eq:vepsv0epsH1} into~\cref{eq:vepsH3} we obtain the $H^3-$estimate of~\cref{eq:vepsH2}.
\end{proof}

Interpolate~\cref{eq:vepsv0epsH1} and~\cref{eq:vepsH2} with~\cref{eq:v0epsH2}, we get
\[
\nm{\vareps v}{H^{3/2}(I_\delta)}+\iota\nm{\vareps v}{H^{5/2}(I_\delta)}\le C\varepsilon^{-1/2}\nm{\nabla  V_l}{L^2(I_\delta)}.
\]
By~\cite[Theorem 3]{LiMingWang:2021}, there exists a regularized interpolation operator $I_h:H^1(I_\delta)\to X_h$ such that for any $v\in H^s(I_\delta),1\le s\le 3$,
\[
\nm{\nabla^j (I-I_h)v}{L^2(I_\delta)}\le Ch^{s-j}\nm{\nabla^sv}{L^2(I_\delta)},\qquad 0\le j\le s,
\]
where $j$ is a non-negative integer, which implies
\[
\nm{\vareps v-I_h\vareps v}{\iota,h}\le Ch^{1/2}(\nm{\vareps v}{H^{3/2}(I_\delta)}+\iota\nm{\vareps  v}{H^{5/2}(I_\delta)})\le C\sqrt{h/\varepsilon}\nm{\nabla V_l}{L^2(I_\delta)}.
\]
\bibliography{main}
\end{document}